\newtheorem{theo}{Theorem}
\newtheorem{claim}{Claim}
\newtheorem{lemma}{Lemma}
\newtheorem{cor}{Corollary}
\title{Packing mixed hyperarborescences} 
\author{Zolt\'an Szigeti\\  University Grenoble Alpes, Grenoble INP, CNRS,\\ Laboratory G-SCOP,\\ Grenoble, France}
\begin{document}

\maketitle

\begin{abstract}
The aim of this paper is twofold. We first provide a new orientation theorem which gives a natural and simple proof of a result of Gao, Yang \cite{GY} on matroid-reachability-based  packing of  mixed arborescences in mixed graphs by reducing it  to the corresponding theorem of Cs. Kir\'aly \cite{cskir} on directed graphs. Moreover, we extend another result of Gao, Yang \cite{GY2} by providing a new theorem on mixed hypergraphs having a packing of mixed hyperarborescences such that their number is at least $\ell$ and at most $\ell'$, each vertex belongs to exactly $k$ of them, and each vertex $v$ is the root of least $f(v)$ and at most $g(v)$  of them.
\end{abstract}

{\bf Keywords: arborescence, mixed hypergraph, packing}

\section{Introduction} 

This paper is not a survey on packing arborescences. Such a survey is in preparation, see \cite{survey}. We only present here those theorems of the topic that are closely related to the new results of this paper. A preliminary version of the paper appeared in \cite{hujaproc}.

Edmonds \cite{Egy} characterized digraphs having a packing of spanning arborescences with fixed roots. Frank \cite{FA} extended it for a packing of spanning arborescences whose roots are not fixed. The result of Frank \cite{FA}, and independently Cai \cite{cai1}, answers the question when a digraph has an $(f,g)$-bounded packing of spanning arborescences, that is when each vertex $v$ can be the root of at least $f(v)$ and at most $g(v)$ arborescences in the packing. B\'erczi, Frank \cite{BF3} extends it for an $(f,g)$-bounded $k$-regular $(\ell,\ell')$-limited packing of not necessarily spanning arborescences, where $k$-regular means that each vertex  belongs to exactly $k$ arborescences in the packing and $(\ell,\ell')$-limited means that the number of arborescences in the packing is at least $\ell$ and at most $\ell'.$ Kamiyama, Katoh, Takizawa \cite{japan} provided a different type of generalization of Edmonds' theorem in which they wanted to pack reachability arborescences in a digraph, that is each arborescence in the packing must contain all the vertices that can be reached from its root in the digraph. Durand de Gevigney, Nguyen,  Szigeti~\cite{Sz} gave a generalization of Edmonds' theorem where a matroid constraint was added for the packing. More precisely, given a matroid {\sf M} on a multiset of vertices of a digraph $D$, we wanted to have a matroid-based packing of arborescences, that is for every vertex $v$ of $D$, the set of roots of the arborescences in the packing containing $v$ must form a basis of {\sf M}. In \cite{cskir} Cs. Kir\'aly  proposed a common generalization of the previous two results. He characterized pairs $(D,{\sf M})$ of a digraph and a matroid  that have a matroid-reachability-based packing of arborescences, that is for every vertex $v$ of $D$, the set of roots of the arborescences in the packing containing $v$  must form a basis of the subset of the elements of {\sf M} from which $v$ is reachable in $D.$

All of these results hold for dypergraphs, see \cite{fkiki}, \cite{HSz}, \cite{survey}, \cite{BF2}, \cite{FKLSzT}, and for mixed graphs, see \cite{FA}, \cite{GY}, \cite{survey}, \cite{MT}, \cite{FKLSzT}, \cite{GY2}. In fact, all of these results, except the one of B\'erczi, Frank \cite{BF3}, are known to hold for mixed hypergraphs, see \cite{FKLSzT}, \cite{HSz}, \cite{HSz2}. The present paper will fill in this gap by showing that this result also holds for mixed hypergraphs. More precisely, we will characterize mixed hypergraphs having an $(f,g)$-bounded $k$-regular $(\ell,\ell')$-limited packing of    mixed hyperarborescences. Our theorem naturally  generalizes a result of Gao, Yang \cite{GY2} on $(f,g)$-bounded  packing of $k$  spanning mixed arborescences and will follow from the theory of generalized polymatroids. The other aim of this paper is to provide a new proof of another result of Gao, Yang \cite{GY} on matroid-reachability-based  packing of mixed arborescences. Our approach is to reduce the result to the result of Cs. Kir\'aly \cite{cskir} on matroid-reachability-based  packing of  arborescences via a new orientation theorem.

The organization of the paper is as follows. In Section \ref{arbor} we consider problems related to matroid-reachability-based  packings of  mixed arborescences. In Section \ref{hyperarbor} we consider problems related to $(f,g)$-bounded $k$-regular $(\ell,\ell')$-limited packings of mixed hyperarborescences.

\section{Definitions}
A {\it multiset} of $V$ may contain multiple occurrences of elements. For a multiset $S$ of $V$ and a subset $X$ of $V$, {\boldmath$S_X$} denotes the multiset consisting of the elements of $X$ with the same multiplicities as in $S$ and  {\boldmath$\overline{X}$} denotes $V-X.$ A set of disjoint subsets of $V$ is called a {\it subpartition} of $V.$ For a subpartition ${\cal P}$ of $V$, {\boldmath$\cup{\cal P}$} denotes the set of elements  that belong to some member of ${\cal P}$. A subpartition ${\cal P}$ of $V$ is a {\it partition} of $V$ if $\cup{\cal P}=V.$ For a function $h$ on $V$ and a subset $X$ of $V,$ $h(X)=\sum_{v\in X}h(v).$

A set function $p$ ($b$) on $S$ is called {\it supermodular} ({\it submodular}) if \eqref{supermod} (\eqref{submod}, respectively) holds. The in-degree function of a digraph and the rank function of a matroid are well-known examples of submodular set functions. 
 If $p$ satisfies the supermodular  inequality  for all intersecting sets for then $p$ is called {\it intersecting supermodular}.
\begin{eqnarray}
 p(X)+p(Y)&\le &p(X\cap Y)+p(X\cup Y) \text{ for all } X,Y\subseteq S,\label{supermod}\\ 
 b(X)+b(Y)&\ge &b(X\cap Y)+b(X\cup Y) \ \text{ for all }  X,Y\subseteq S.\label{submod}
 \end{eqnarray} 

Let {\boldmath$D$} $=(V,A)$ be a directed graph, shortly {\it digraph}. For a  subset $X$ of $V,$ the set of arcs in $A$ {\it entering $X$} is denoted by {\boldmath$\rho_A(X)$}  and the {\it in-degree} of $X$ is {\boldmath$d^-_A(X)$} $=|\rho_A(X)|.$ For a subset  $X$ of $V,$ we denote by {\boldmath$P_D^X$} ({\boldmath$Q_D^X$}) the set of vertices from (to) which there exists a path to (from, respectively) at least one vertex of $X$. We say that $D$ is an {\it arborescence with root $s$}, shortly {\it $s$-arborescence}, if $s\in V$ and there exists a unique path from $s$ to $v$ for every $v\in V;$ or equivalently, if $D$ contains no circuit and every vertex in $V-s$ has in-degree $1.$ We say that $D$ is a {\it branching with root set $S$} if $S\subseteq V$ and there exists a unique path from $S$ to $v$ for every $v\in V.$  A subgraph of $D$ is called {\it spanning}  if its vertex set is $V.$
 A subgraph of $D$ is called a {\it reachability $s$-arborescence} if it is an $s$-arborescence and its vertex set is $Q_D^s.$
By a {\it packing} of subgraphs in $D$, we mean a set of subgraphs that are arc-disjoint. A packing of subgraphs is called {\it $k$-regular} if every vertex belongs to exactly $k$ subgraphs in the packing. For two functions $f,g: V\rightarrow \mathbb Z_+$, a packing of arborescences is called {\it $(f,g)$-bounded} if  the number of $v$-arborescences in the packing is at least $f(v)$ and at most $g(v)$ for every $v\in V$. For $\ell,\ell'\in\mathbb Z_+,$ a packing of arborescences is called {\it $(\ell,\ell')$-limited} if the number of arborescences in the packing is at least $\ell$ and at most $\ell'.$ For  a multiset $S$ of $V$ and  a matroid ${\sf M}$ on $S,$ a packing of arborescences in $D$  is called {\it  matroid-based} (resp. {\it  matroid-reachability-based}) if every $s\in S$ is the root of at most one arborescence in the packing and for every  $v\in V$, the multiset of roots  of arborescences containing $v$ in the packing forms a basis of $S$ ($S_{P_D^v}$, respectively) in ${\sf M}.$

Let {\boldmath$F$} $=(V,E\cup A)$ be a {\it mixed graph}, where $E$ is a set of {edges} and $A$ is a set of {arcs}. A mixed subgraph $F'$ of $F$ is a {\it mixed path} if the edges in $F'$ can be oriented in such a way that we obtain a directed path.
For a subset  $X$ of $V,$ we denote by {\boldmath$P_F^X$} ({\boldmath$Q_F^X$}) the set of vertices from (to) which there exists a mixed path to (from, respectively) at least one vertex of $X$. We say that $F$ is {\it strongly connected} if there exists a mixed path from $s$ to $t$ for all $(s,t)\in V^2.$ The maximal strongly connected subgraphs of $F$ are called the {\it strongly connected components} of $F$.  A {\it mixed $s$-arborescence} is a mixed graph  that has an orientation that is an $s$-arborescence. 
 A mixed subgraph of $F$ is called a {\it spanning (reachability) mixed $s$-arborescence} if it is a mixed $s$-arborescence and its vertex set is $V$ ($Q_F^s$, respectively). By a {\it packing} of subgraphs in $F$, we mean a set of subgraphs that are edge- and arc-disjoint. All the packing problems considered in digraphs can also be considered in mixed graphs. 

 Let {\boldmath$\mathcal{D}$} $=(V,\mathcal{A})$ be a directed hypergraph, shortly {\it dypergraph}, where {\boldmath$\mathcal{A}$} is the set of dyperedges of $\mathcal{D}.$ A {\it dyperedge} $e$ is an ordered pair $(Z,z)$, where $z\in V$ is the  {\it head}  and $\emptyset\neq Z\subseteq V-z$ is the set of {\it tails}  of $e.$ For $X\subseteq V,$ a dyperedge $(Z,z)$ {\it enters $X$} if $z\in X$ and $Z\cap \overline X\neq\emptyset.$ The  set of dyperedges in $\mathcal{A}$ {\it entering $X$} is denoted by {\boldmath$\rho_\mathcal{A}(X)$}  and the {\it in-degree} of $X$ is {\boldmath$d^-_\mathcal{A}(X)$} $=|\rho_\mathcal{A}(X)|.$ 
By {\it trimming} a dyperedge $(Z,z)$, we mean the operation that replaces $(Z,z)$ by an arc $yz$ where $y\in Z.$  We say that $\mathcal{D}$ is  a {\it hyperarborescence with root $s$}, shortly {\it $s$-hyperarborescence}, if $\mathcal{D}$ can be trimmed to an $s$-arborescence. We mention that  we  delete vertices that became isolated vertices during the trimming, that is the vertex set of the arborescence is not necessarily the vertex set of $\mathcal{D}$.  
 We say that $\mathcal{D}$ is a {\it hyperbranching with root set $S$} if $\mathcal{D}$ can be trimmed to a branching with root set $S$ (the resulting isolated vertices in $V-S$ are deleted). If $S=\{s\}$ then a hyperbranching with root set $S$ is an $s$-hyperarborescence.
A hyperbranching $(V,\mathcal{A}')$ with root set $S$ is called {\it spanning} in $\mathcal{D}$ if $\mathcal{A}'\subseteq \mathcal{A}$ and  $|\mathcal{A}'|=|V|-|S|.$ 
A {\it packing} of subdypergraphs in $\mathcal{D}$ is a set of subdypergraphs that are dyperedge-disjoint.
We say that $\mathcal{D}$ has a {\it matroid-based}/{\it $(f,g)$-bounded}/{\it $k$-regular}/{\it $(\ell,\ell')$-limited} packing of hyperarborescences if $\mathcal{D}$ can be trimmed to a digraph $(V,A)$ that has  a {matroid-based}/{$(f,g)$-bounded}/{$k$-regular}/{$(\ell,\ell')$-limited} packing of arborescences.

Let {\boldmath$\mathcal{F}$} $=(V,\mathcal{E}\cup \mathcal{A})$ be a {\it mixed hypergraph}, where {\boldmath$\mathcal{E}$} is the set of {hyperedges} and {\boldmath$\mathcal{A}$} is the set of {dyperedges} of $\mathcal{F}$. A {\it hyperedge} is a subset of $V$ of size at least two. A hyperedge $X$ {\it enters} a subset $Y$ of $V$ if $X\cap Y\neq\emptyset\neq \overline X\cap Y.$
By {\it orienting} a hyperedge $X,$ we mean the operation that replaces the hyperedge $X$ by a dyperedge $(X-x,x)$ for some $x\in X$. 
For $\vec{\mathcal{Z}}\subseteq \mathcal{A}$,    $\mathcal{Z}$ denotes the set of underlying hyperedges of $\vec{\mathcal{Z}}$. 
For  ${\mathcal{Z}}\subseteq \mathcal{E}$ and $X\subseteq V$, we denote by {\boldmath$V(\mathcal{Z})$} the set of vertices that belong to at least one hyperedge in $\mathcal{Z}$ and by {\boldmath$\mathcal{Z}(X)$} the set of hyperedges in  $\mathcal{Z}$ that are contained in $X.$
 A {\it mixed $s$-hyperarborescence} is a mixed hypergraph that has an orientation that is an $s$-hyperarborescence. 
A mixed $s$-hyperarborescence $\mathcal{B}=(V,\mathcal{E}'\cup \mathcal{A}')$ is called {\it spanning} in $\mathcal{F}$ if $\mathcal{E}'\subseteq \mathcal{E}$, $\mathcal{A}'\subseteq \mathcal{A}$, and  $|\mathcal{E}'|+|\mathcal{A}'|=|V|-1.$ 
For a family ${\cal P}$ of subsets of $V$, we denote by {\boldmath$e_{\mathcal{E}\cup \mathcal{A}}({\cal P})$} the number of hyperedges in $\mathcal{E}$ and dyperedges in $\mathcal{A}$ that enter some member of ${\cal P}$. For $X\subseteq V$, we use $e_{\mathcal{E}\cup \mathcal{A}}(X)$ for $e_{\mathcal{E}\cup \mathcal{A}}(\{X\}).$
A {\it packing} of mixed subhypergraphs in $\mathcal{F}$ is a set of mixed subhypergraphs that are hyperedge- and dyperedge-disjoint.
We say that $\mathcal{F}$ has an {\it $(f,g)$-bounded}/{\it $k$-regular}/{\it $(\ell,\ell')$-limited} packing of mixed hyperarborescences if $\mathcal{E}$ has an orientation $\vec{\mathcal{E}}$ such that the dypergraph $(V,\vec{\mathcal{E}}\cup\mathcal{A})$  has  an {$(f,g)$-bounded}/{$k$-regular}/{$(\ell,\ell')$-limited} packing of hyperarborescences. 

\section{Packing mixed arborescences}\label{arbor}

In this section we list known results on packing mixed arborescences that are related to our first contribution. We propose a new approach to prove a result of Gao, Yang \cite{GY} on matroid-reachability-based  packing of  mixed arborescences via a new orientation theorem,  and we provide its proof.
\medskip

We start with the fundamental result of Edmonds \cite{Egy} on packing spanning arborescences with fixed roots.
\begin{theo}[Edmonds \cite{Egy}]\label{edmondsarborescences} 
Let $D=(V,A)$ be a digraph and $S$ a multiset of  $V.$
There exists a packing of spanning $s$-arborescences $(s\in S)$ in $D$ if and only if 
	\begin{eqnarray*} 
		 d^-_A(X) \geq |S|-|S_X|  \text{ for every  $\emptyset\neq X\subseteq V.$}
	\end{eqnarray*}
\end{theo}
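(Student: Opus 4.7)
Necessity is a straightforward counting argument, and sufficiency follows by the classical induction of Lov\'asz on $|S|$ with an uncrossing step.

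For \textbf{necessity}, fix a non-empty $X\subseteq V$. Each of the $|S|-|S_X|$ arborescences in the packing whose root lies in $S\setminus S_X$ must contain at least one arc entering $X$, in order for its directed path from the root to reach the vertices of $X$. Arc-disjointness of the packing then yields $d^-_A(X)\geq|S|-|S_X|$.

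For \textbf{sufficiency}, I would induct on $|S|$; the case $|S|=0$ is vacuous. For the inductive step, fix any $s\in S$ and aim to extract from $A$ a spanning $s$-arborescence $T$ such that $(V,A\setminus A(T))$ satisfies the Edmonds inequality for the multiset $S-s$. The inductive hypothesis, applied to this remainder, then furnishes the remaining $|S|-1$ spanning arborescences arc-disjointly inside $A\setminus A(T)$. To extract such a $T$, I would grow it one arc at a time, starting from the trivial arborescence on $U=\{s\}$ and appending at each step an arc $uv\in A\setminus A(T)$ with $u\in U$ and $v\in V\setminus U$, updating $U:=U\cup\{v\}$. Writing $\sigma(X)=d^-_A(X)-(|S|-|S_X|)\geq 0$ for the slack of the given Edmonds inequality, the final arborescence $T$ needs to satisfy the two degree bounds $d^-_{A(T)}(X)\leq\sigma(X)$ for every $X\ni s$ and $d^-_{A(T)}(X)\leq\sigma(X)+1$ for every $X\not\ni s$, which together are equivalent to $A\setminus A(T)$ satisfying the Edmonds inequality for $S-s$. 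The greedy step must therefore pick an arc that does not violate any of these bounds in the limit.

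The main obstacle is to show that a safe extending arc always exists while $U\neq V$. Since $d^-_A$ is submodular and $|S_X|$ is modular in $X$, the slack $\sigma$ is submodular, so the family of tight sets $\{X:\sigma(X)=0\}$ is closed under intersection and union. If no arc in $\rho_A(V\setminus U)\setminus A(T)$ were safe, one could associate to each such arc a tight set whose degree bound it would saturate; uncrossing two such tight sets chosen minimally would produce a strictly smaller tight witness of the same obstruction, contradicting the minimal choice. Existence of a safe arc then follows, together with the observation that $\sigma(V\setminus U)\geq |S_U|\geq 1$ guarantees that $\rho_A(V\setminus U)\setminus A(T)$ is non-empty in the first place. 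This uncrossing step is the only technically delicate part; once it is in place, the greedy construction terminates with a valid spanning $s$-arborescence $T$, and the induction closes.
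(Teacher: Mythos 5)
The paper itself offers no proof of this theorem: it is quoted as Edmonds' classical result with a citation, so there is no in-paper argument to compare against, and I judge your proposal against the standard proof (Lov\'asz's greedy argument) that it visibly follows. Your necessity count is correct, and so is your translation of the inductive step into the two budgets $d^-_{A(T)}(X)\le\sigma(X)$ for $X\ni s$ and $d^-_{A(T)}(X)\le\sigma(X)+1$ for $X\not\ni s$, which is indeed equivalent to $A\setminus A(T)$ satisfying the hypothesis for $S-s$.

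The gap is in the safe-arc lemma, the step you yourself flag as delicate, and as sketched it would fail. First, the relevant tight family is not $\{X:\sigma(X)=0\}$: a set with large initial slack becomes critical once $T$ has entered it often enough, so the uncrossing must be run on the \emph{residual} slack $\tau(X)=d^-_{A\setminus A(T)}(X)-(|S|-1)+|(S-s)_X|$, which is again intersecting submodular because it is the in-degree function of an actual digraph plus a modular term. Moreover, since $\tau(\emptyset)<0$, tight sets uncross only when they intersect; the witness sets you associate to two different unsafe arcs may be disjoint, so ``uncrossing two such tight sets chosen minimally'' produces no contradiction, and nothing in your sketch identifies the arc on which the contradiction should be run. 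The correct argument fixes one inclusionwise minimal $\tau$-tight set $X$ with $X\setminus U\neq\emptyset$ (if no tight set meets $V\setminus U$, any residual arc into $V\setminus U$ is safe); notes $X\cap U\neq\emptyset$, because a set disjoint from $U$ keeps its full in-degree (every arc of $T$ has its head in $U$) and hence has $\tau\ge1$; and then runs a surplus count absent from your proposal: $X\setminus U$ retains residual in-degree at least $|S|-|S_{X\setminus U}|$, while residual arcs entering $X\setminus U$ from outside $X$ also enter $X$ and number at most $|S|-1-|(S-s)_X|\le|S|-1-|S_{X\setminus U}|$, so some residual arc $uv$ runs from $X\cap U$ to $X\setminus U$. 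If $uv$ entered a tight set $Y$, then $X\cap Y$ (containing $v$, missing $u$, meeting $V\setminus U$) would be a strictly smaller tight set, contradicting minimality. With this replacement your induction closes. Two smaller repairs: your non-emptiness bound should read $d^-_A(V\setminus U)\ge|S|-|S_{V\setminus U}|=|S_U|\ge1$ rather than $\sigma(V\setminus U)\ge|S_U|$, combined with the observation that no arc of $T$ enters $V\setminus U$; and note that the shortest complete route to this multiset version is to add a new root $r$ with one arc $rs$ per element $s$ of $S$ and invoke the single-root theorem with $k=|S|$, since $d^-(X)\ge|S|$ in the enlarged digraph is exactly your hypothesis.
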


Frank \cite{FA} extended Theorem \ref{edmondsarborescences} for mixed graphs. Here we  present a seemingly more general version of it but it is equivalent to the original result.

\begin{theo}[Frank \cite{FA}]\label{mixedgraph}
Let $F=(V,E\cup A)$ be a mixed graph,   $S$ a multiset of  $V.$
There exists a packing of spanning  mixed  $s$-arborescences $(s\in S)$ in  $F$ if and only if 
\begin{eqnarray} \label{condmixed}
	e_{E\cup A}({\cal P}) &\geq& |S||{\cal P}|-|S_{\cup\mathcal{P}}| \hskip .44truecm \text { for every subpartition } {\cal P} \text { of } V.
\end{eqnarray}
\end{theo}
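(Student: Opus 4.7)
The plan is to reduce Theorem \ref{mixedgraph} to Edmonds' Theorem \ref{edmondsarborescences} via an orientation theorem. Concretely, I will look for an orientation $\vec E$ of $E$ so that the resulting digraph $D'=(V,\vec E\cup A)$ satisfies the Edmonds condition $d^-_{\vec E\cup A}(X)\ge |S|-|S_X|$ for every nonempty $X\subseteq V$. Applying Theorem \ref{edmondsarborescences} to $D'$ with the same multiset $S$ then yields a packing of spanning $s$-arborescences in $D'$, which is immediately a packing of spanning mixed $s$-arborescences in $F$.

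Necessity of \eqref{condmixed} is straightforward. Given a packing and a subpartition ${\cal P}$, orient each mixed $s$-arborescence as an $s$-arborescence. In each orientation, exactly one arc enters each $X\in{\cal P}$ with $s\notin X$, and no arc enters $X$ when $s\in X$. Summing over $s\in S$ produces exactly $|S||{\cal P}|-|S_{\cup{\cal P}}|$ oriented arcs entering members of ${\cal P}$. Because the members of ${\cal P}$ are pairwise disjoint, each oriented arc enters at most one of them; and by the arc- and edge-disjointness of the packing, these oriented arcs correspond to distinct elements of $E\cup A$, each entering some member of ${\cal P}$ in $F$. Hence $e_{E\cup A}({\cal P})\ge |S||{\cal P}|-|S_{\cup{\cal P}}|$.

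For sufficiency, define $p:2^V\to\mathbb Z$ by $p(\emptyset)=0$ and $p(X)=|S|-|S_X|$ for $X\neq\emptyset$. Since $X\mapsto|S_X|$ is modular, $p$ is intersecting supermodular (in fact modular on intersecting nonempty pairs), and \eqref{condmixed} is precisely the subpartition inequality $e_{E\cup A}({\cal P})\ge\sum_{X\in{\cal P}}p(X)$. Plugging this into an orientation theorem for mixed graphs of Frank's type (such as the new orientation theorem promised in the introduction) produces an orientation $\vec E$ of $E$ with $d^-_{\vec E\cup A}(X)\ge p(X)$ for every $X\subseteq V$, and Theorem \ref{edmondsarborescences} applied to $D'=(V,\vec E\cup A)$ delivers the required packing. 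The main obstacle is precisely this orientation step: its proof typically rests on submodular-flow / generalized-polymatroid machinery, or on a direct uncrossing argument that iteratively orients one edge at a time while keeping the subpartition inequality intact. Once such a theorem is in hand, the remaining bookkeeping (integer-valuedness of $p$, the trivial boundary values $p(\emptyset)=p(V)=0$, and the intersecting supermodularity check) is routine.
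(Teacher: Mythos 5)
Your overall strategy (orient $E$ so that the Edmonds condition holds, then apply Theorem~\ref{edmondsarborescences}) is sound, and it is consistent with the paper's general methodology; note, however, that the paper gives no proof of Theorem~\ref{mixedgraph} at all --- it is quoted as Frank's result --- so the only question is whether your argument stands on its own. Your necessity argument is fine apart from a small slip: a spanning $s$-arborescence has \emph{at least} one arc entering each $X\in{\cal P}$ with $s\notin X$, not exactly one; the count $|S||{\cal P}|-|S_{\cup{\cal P}}|$ is a lower bound on the number of entering (arborescence, arc) incidences, and disjointness of the packing and of the members of ${\cal P}$ then gives \eqref{condmixed} as you say.

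The genuine gap is in the orientation step, which you leave as a black box and, worse, point at the wrong tool. The ``new orientation theorem promised in the introduction'' is Theorem~\ref{lknvlknlkklsz}, and for any choice of $h$ it only delivers $d^-_{\vec E\cup A}(X)\ge h(X)-h(P_F^X)$; with $h(X)=-|S_X|$ (or $h(X)=|S|-|S_X|$, since additive shifts cancel) this is the \emph{reachability} bound $|S_{P_F^X}|-|S_X|$, which is strictly weaker than the Edmonds bound $|S|-|S_X|$ whenever $F$ is not strongly connected, so plugging in there would only yield a packing of reachability mixed arborescences, not spanning ones. The correct tool, available in the paper, is Theorem~\ref{hypgraphcoveringhorient} applied to the undirected part $G=(V,E)$ with
\begin{equation*}
h(X)=|S|-|S_X|-d^-_A(X)\ \text{ for }\emptyset\neq X\subseteq V,\qquad h(\emptyset)=0 .
\end{equation*}
This $h$ is intersecting supermodular ($X\mapsto|S|-|S_X|$ is modular on intersecting pairs and $-d^-_A$ is supermodular) and $h(V)=|S|-|S_V|-d^-_A(V)=0$. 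Since each arc enters at most one member of a subpartition, $e_{E\cup A}({\cal P})=e_E({\cal P})+\sum_{X\in{\cal P}}d^-_A(X)$, so \eqref{condmixed} is literally \eqref{condhyph} for this $h$, and Theorem~\ref{hypgraphcoveringhorient} yields an orientation with $d^-_{\vec E}(X)\ge |S|-|S_X|-d^-_A(X)$, i.e.\ $d^-_{\vec E\cup A}(X)\ge |S|-|S_X|$ for every nonempty $X$. With this substitution your proof is complete; without it, the key step as written would fail.
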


An elegant extension of Theorem \ref{edmondsarborescences} for packing reachability arborescences was provided in \cite{japan}.

\begin{theo}[Kamiyama, Katoh, Takizawa \cite{japan}]\label{reach1} 
Let $D=(V,A)$ be a digraph and $S$ a multiset of  $V.$
There exists a packing of reachability $s$-arborescences $(s\in S)$ in $D$ if and only if 
	\begin{eqnarray*}  
		d^-_A(X) \geq |S_{P_D^X}|-|S_X|  \text{ for every  $X\subseteq V.$}
	\end{eqnarray*}
\end{theo}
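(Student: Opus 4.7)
For necessity, fix $X\subseteq V$ and suppose $\{T_s:s\in S\}$ is a packing of reachability $s$-arborescences. Each $s\in S_{P_D^X}\setminus S_X$ satisfies $s\notin X$ while $Q_D^s\cap X\neq\emptyset$, so the arborescence $T_s$ (whose vertex set is $Q_D^s$) must contain at least one arc of $\rho_A(X)$; by arc-disjointness of the packing this gives $d^-_A(X)\geq |S_{P_D^X}|-|S_X|$.

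For sufficiency I would proceed by induction on $|V|$ and reduce to Theorem \ref{edmondsarborescences} by peeling off a source strongly connected component. Choose a source SCC $C$ of $D$, i.e., an SCC with no arc of $A$ entering it from $V-C$. Two observations drive the reduction: (i) for every nonempty $X\subseteq C$ one has $P_D^X=C$ and every arc of $\rho_A(X)$ lies inside $C$, because no vertex outside $C$ can reach $C$; (ii) for every $s\in S\setminus S_C$ one has $Q_D^s\subseteq V-C$, for the same reason. By (i) the hypothesis restricted to subsets of $C$ specializes to the Edmonds inequality $d^-_A(X)\geq |S_C|-|S_X|$ inside $D[C]$, so Theorem \ref{edmondsarborescences} supplies a packing of spanning $s$-arborescences $(s\in S_C)$ in $D[C]$. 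I would then extend each such arborescence to a reachability $s$-arborescence of $D$ by appending, arc-disjointly, a branching that spans $Q_D^s-C$ and is rooted on the boundary of $C$; the feasibility of these simultaneous extensions follows from Edmonds-type cut inequalities between $C$ and $V-C$ derived from the hypothesis. Finally, using (ii), the reachability $s$-arborescences for $s\in S\setminus S_C$ must live entirely in $D-C$, so after discarding the arcs already committed I would invoke the inductive hypothesis on $D-C$ with multiset $S-S_C$.

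The main obstacle is the bookkeeping that guarantees, after the Edmonds packing in $D[C]$ and its extensions into $V-C$, that the reachability inequality still holds on the reduced instance. The natural tool is the supermodularity of $X\mapsto |S_{P_D^X}|$, which is immediate from $P_D^{X\cup Y}=P_D^X\cup P_D^Y$ and $P_D^{X\cap Y}\subseteq P_D^X\cap P_D^Y$, combined with the modularity of $X\mapsto |S_X|$; together these make the deficiency function $b(X)=|S_{P_D^X}|-|S_X|$ supermodular. An uncrossing argument then splits the deficiency of any tight set $X\subseteq V$ along the partition $\{X\cap C, X-C\}$ and shows that the source-SCC choice concentrates the ``fresh'' deficiency inside $C$, so that the arcs consumed by the Edmonds packing together with the extensions leave enough slack on $V-C$ for the induction to go through.
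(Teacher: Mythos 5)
Your necessity argument is correct and complete. Note for calibration that the paper itself gives no proof of Theorem \ref{reach1}: it quotes the result from \cite{japan} and only remarks that it ``can be proved by induction and using Edmonds' result on packing spanning branchings'' (citing \cite{HSz2}), so your sufficiency plan must stand on its own merits --- and it has a genuine gap. The central tool you invoke is false: the facts $P_D^{X\cup Y}=P_D^X\cup P_D^Y$ and $P_D^{X\cap Y}\subseteq P_D^X\cap P_D^Y$, combined with the modularity of multiset cardinality, yield $|S_{P_D^{X\cup Y}}|+|S_{P_D^{X\cap Y}}|\le |S_{P_D^X\cup P_D^Y}|+|S_{P_D^X\cap P_D^Y}|=|S_{P_D^X}|+|S_{P_D^Y}|$, i.e.\ $X\mapsto |S_{P_D^X}|$ is \emph{submodular}, not supermodular; hence $b(X)=|S_{P_D^X}|-|S_X|$ is submodular. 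A two-vertex check: for $D$ with the single arc $ab$ and $S=\{a\}$ one has $b(\{a\})+b(\{b\})=0+1=1>0=b(\{a,b\})+b(\emptyset)$. Since $d^-_A$ is submodular, tight sets for $d^-_A(X)\ge b(X)$ do not uncross, and the uncrossing step you rely on to ``concentrate the fresh deficiency inside $C$'' collapses. This is not a repairable slip in bookkeeping: the failure of supermodularity of exactly this deficiency function is the well-known reason the Kamiyama--Katoh--Takizawa theorem does not follow from the standard supermodular-covering framework and required new ideas in the first place.

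The second problem is the order of operations in your two-phase plan. After packing spanning $s$-arborescences $(s\in S_C)$ in $D[C]$ via Theorem \ref{edmondsarborescences} (this part is fine, as are your observations (i) and (ii)), you propose to commit, in one shot, $|S_C|$ arc-disjoint extensions spanning all of $Q_D^C-C$, and only then recurse on $(D-C,\,S-S_C)$. But those extensions compete for the arcs of $D-C$ with the reachability arborescences rooted at $S-S_C$, which must themselves span portions of $Q_D^C-C$; an arbitrary feasible choice of extensions can destroy the condition $d^-(X)\ge |S_{P^X}|-|S_X|$ on the reduced instance, and you give no selection rule guaranteeing otherwise --- precisely the point where a correct argument needs its key idea. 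The proof the paper points to avoids this interaction by never fixing the extensions globally: one processes the strongly connected components in topological order and, at each component $C_j$, extends \emph{all} partial arborescences whose roots lie in $P_D^{C_j}$ simultaneously, by a single application of Edmonds' theorem for packing spanning branchings inside $C_j$ (root sets given by the heads of chosen entering arcs together with the roots in $S_{C_j}$), verifying the branching condition at each step directly from the hypothesis. That component-by-component simultaneous extension, rather than a supermodular uncrossing, is the missing ingredient your sketch would need.
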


When each vertex is reachable from every vertex of $S$, Theorem \ref{reach1} reduces to Theorem \ref{edmondsarborescences}.
Theorem \ref{reach1} can be proved by induction and using Edmonds' result on packing  spanning branchings, see H\"orsch, Szigeti \cite{HSz2}.
\medskip

 Theorem \ref{reach1} can also be generalized for mixed graphs follows. For convenience, we present not the original version of the result but one due to Gao, Yang \cite{GY} that fits better to our framework. 

\begin{theo}[Matsuoka, Tanigawa \cite{MT}]\label{mattan}
Let $F=(V,E\cup A)$ be a mixed graph and $S$ a multiset of  $V.$
There exists a packing of reachability mixed $s$-arborescences $(s\in S)$ in $F$ if and only if 
for every strongly connected component $C$ of $F$ and every  set ${\cal P}$ of subsets of $P_F^C$ such that $Z\cap C\neq\emptyset$ and $e_{E\cup A}(Z-C)=0$ for every $Z\in{\cal P}$ and $Z\cap Z'\cap C=\emptyset$ for every $Z,Z'\in{\cal P}$,
\begin{eqnarray*}
e_{E\cup A}({\cal P})\geq \sum_{Z\in {\cal P}}(|S_{P_F^C}|-|S_Z|).
\end{eqnarray*}
\end{theo}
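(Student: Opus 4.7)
The plan is to prove Theorem \ref{mattan} by reducing to Theorem \ref{reach1} via an orientation theorem, in the spirit of the approach announced in the abstract.

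Necessity is by direct counting. Given a packing $\{T_s : s \in S\}$ of reachability mixed $s$-arborescences in $F$, fix a strongly connected component $C$ and a family $\mathcal{P}$ as in the statement. For each copy of $s \in S_{P_F^C}$ with $s \notin Z$, the strong connectivity of $C$ implies that $s$ reaches every vertex of $C$ in $F$, in particular some vertex of the nonempty set $Z \cap C$, so $T_s$ enters $Z$. Moreover, every edge or arc of $F$ entering some member of $\mathcal{P}$ has its head in $C$, since $e_{E \cup A}(Z - C) = 0$ rules out a head in $Z - C$. Because the sets $\{Z \cap C : Z \in \mathcal{P}\}$ are pairwise disjoint, each oriented arc of an individual $T_s$ enters at most one member of $\mathcal{P}$, and so the number of $Z \in \mathcal{P}$ entered by $T_s$ is bounded above by the number of arcs of $T_s$ used for those entries. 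Combining this with edge-disjointness of the packing yields $\sum_{Z \in \mathcal{P}}(|S_{P_F^C}| - |S_Z|) \le \sum_{Z \in \mathcal{P}} |\{s : T_s \text{ enters } Z\}| \le e_{E \cup A}(\mathcal{P})$.

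For sufficiency, I would prove an orientation theorem: assuming the condition of Theorem \ref{mattan} holds, there exists an orientation $\vec E$ of $E$ such that every strongly connected component of $F$ remains strongly connected in the digraph $D = (V, \vec E \cup A)$ (so that $P_D^X = P_F^X$ for every $X \subseteq V$) and $d^-_D(X) \ge |S_{P_F^X}| - |S_X|$ for every $X \subseteq V$. Applying Theorem \ref{reach1} to $D$ then produces a packing of reachability $s$-arborescences in $D$, which is exactly a packing of reachability mixed $s$-arborescences in $F$. The orientation would be built via the theory of intersecting supermodular functions: one sets up the demand $p(X) = |S_{P_F^X}| - |S_X| - d^-_A(X)$ on subsets $X \subseteq V$, adds auxiliary demands enforcing strong connectivity inside each component of $F$, and invokes a Frank-style orientation theorem for in-degree-constrained orientations of $E$.

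The main obstacle will be translating the feasibility condition of this combined orientation problem back into the subpartition form of Theorem \ref{mattan}. After uncrossing, one may restrict attention to sets $X$ contained in $P_F^C$ for some strongly connected component $C$ with $X \cap C \ne \emptyset$; the demand $p$ is unchanged when one adds or removes vertices of $P_F^C - C$ that receive no incoming edges or arcs in $F$, which is precisely the content of the condition $e_{E \cup A}(Z - C) = 0$ in the statement. Showing that this reduction captures exactly the feasibility obstructions produced by the Frank-style orientation theorem is the delicate technical step; once it is verified, sufficiency follows by composing the orientation theorem with Theorem \ref{reach1}.
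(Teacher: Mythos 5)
Your necessity argument is sound, and your overall strategy (orient $E$, then invoke the directed reachability theorem) is indeed the paper's strategy: Theorem \ref{mattan} is the free-matroid case of Theorem \ref{lblbkjlk}, which the paper derives from Theorem \ref{thmCsaba} via the new orientation Theorem \ref{lknvlknlkklsz}. But your sufficiency rests on an orientation theorem that is false as stated. You require every strongly connected component of $F$ to stay strongly connected in $D=(V,\vec E\cup A)$, which is a Robbins-type condition that can fail even when the desired packing exists: take $V=\{u,v\}$, $E=\{uv\}$, $A=\emptyset$, $S=\{u\}$. Here $F$ is strongly connected, the condition of Theorem \ref{mattan} holds, and the packing exists (orient the edge from $u$ to $v$), yet no orientation of a single edge keeps $\{u,v\}$ strongly connected, so the auxiliary strong-connectivity demands you propose to add are infeasible. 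The correct move --- the one the paper makes --- is to demand only $d^-_{\vec E\cup A}(X)\ge |S_{P_F^X}|-|S_X|$ for all $X\subseteq V$ and then observe that this inequality, applied to the set $P_{\vec F}^X$ (which has in-degree $0$ in $\vec F$), forces $|S_{P_{\vec F}^X}|\ge |S_{P_F^{P_{\vec F}^X}}|\ge |S_{P_F^X}|$, hence $S_{P_{\vec F}^v}=S_{P_F^v}$ for every $v$; the reachability property in $F$ then comes for free, with no connectivity-preservation constraint at all.

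The second gap is the step you yourself flag as ``delicate'' and leave open; it is in fact the entire content of the sufficiency direction. Your demand $p(X)=|S_{P_F^X}|-|S_X|-d^-_A(X)$ is not intersecting supermodular: since $P_F^{X\cup Y}=P_F^X\cup P_F^Y$ and $P_F^{X\cap Y}\subseteq P_F^X\cap P_F^Y$, the function $X\mapsto |S_{P_F^X}|$ is \emph{submodular}, so a Frank-style orientation theorem (Theorem \ref{hypgraphcoveringhorient}) does not apply to $p$ off the shelf, and no amount of routine uncrossing fixes this. The paper circumvents the difficulty in Theorem \ref{lknvlknlkklsz} by writing the demand as $h(X)-h(P_F^X)$ with $h(X)=-|S_X|$ modular, and proving the orientation result by induction on $|V|$: it deletes a sink strongly connected component $C$ (one with $e_{E\cup A}(\overline C)=0$), orients the rest by induction, orients the graph induced on $C$ by applying Theorem \ref{hypgraphcoveringhorient} to the auxiliary function $h_2(X)=\max\{h(Y)-d^-_A(Y):Y\subseteq P_F^C,\ Y\cap C=X,\ e_{E\cup A}(Y-C)=0\}$ --- whose intersecting supermodularity is Claim \ref{bvesszoszubmod} --- and then glues the two partial orientations, verifying \eqref{hgjgfxhfdwfdsz} for sets crossing $C$ via two applications of supermodularity of $h$ (to $X$ and $Z=P_F^{X-C}$, then to $P_F^C$ and $X\cup Z$). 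Your proposal gestures at exactly these issues (restriction to $Y\subseteq P_F^C$ with $Y\cap C\ne\emptyset$ and $e_{E\cup A}(Y-C)=0$) but supplies neither the supermodular surrogate $h_2$ nor the gluing argument, so as written the sufficiency direction is a plan rather than a proof.
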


If $F$ is a digraph then Theorem \ref{mattan} reduces to Theorem \ref{reach1}. 
\medskip

Another type of generalizations of Theorem \ref{edmondsarborescences} was obtained by adding a matroid constraint.

\begin{theo}[Durand de Gevigney, Nguyen,  Szigeti~\cite{Sz}] \label{thmSz}
Let $D=(V,A)$ be a  digraph, $S$ a multiset of  $V$ and ${\sf M}=(S,r_{\sf M})$ a matroid. There exists a  ${\sf M}$-based packing of arborescences in $D$  if and only if 
	\begin{eqnarray*}
		d^-_{A}(X)	\geq	 r_{{\sf M}}(S)-r_{{\sf M}}(S_X) \text{ for every } \emptyset\neq X\subseteq V. 
	\end{eqnarray*}
\end{theo}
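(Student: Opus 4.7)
The plan is to dispose of necessity by a counting argument and to handle sufficiency by induction on $r_{\sf M}(S)$, with Theorem \ref{edmondsarborescences} as the engine for the inductive step.

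\emph{Necessity.} Let $\mathcal{T}$ be an ${\sf M}$-based packing of arborescences. Fix $\emptyset\neq X\subseteq V$ and any $v\in X$. The multiset $B_v$ of roots of members of $\mathcal{T}$ containing $v$ is a basis of ${\sf M}$, so $|B_v|=r_{\sf M}(S)$; since $B_v\cap S_X$ is independent in ${\sf M}$, we have $|B_v\cap S_X|\le r_{\sf M}(S_X)$. Hence at least $r_{\sf M}(S)-r_{\sf M}(S_X)$ arborescences through $v$ have their root in $S_{\overline X}$, each contributing a distinct arc entering $X$, giving the inequality.

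\emph{Sufficiency.} I would proceed by induction on $r_{\sf M}(S)$. The base case $r_{\sf M}(S)=0$ is trivial, since then every $s\in S$ is a loop of ${\sf M}$ and the empty packing is ${\sf M}$-based. For the inductive step, pick any non-loop $s_0\in S$ and aim to extract a single $s_0$-arborescence while contracting $s_0$ in the matroid. The inductive engine is the key lemma: there exists an $s_0$-arborescence $T_0\subseteq D$ such that $(D-T_0,\,{\sf M}/s_0)$ still satisfies the condition of the theorem, and the vertex set of $T_0$ consists precisely of the vertices $v\in V$ at which every root-basis of ${\sf M}$ must contain $s_0$. Granting the key lemma, the inductive hypothesis produces an ${\sf M}/s_0$-based packing of $D-T_0$ which, together with $T_0$, forms an ${\sf M}$-based packing of $D$: the bases of ${\sf M}/s_0$ extended by $s_0$ are exactly the bases of ${\sf M}$ containing $s_0$, which matches the root structure required at each vertex of $T_0$.

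The entire weight of the proof rests on the key lemma, which is where I expect the main obstacle to lie. My plan for it is as follows. First, identify $U_{s_0}\subseteq V$ as the set of vertices at which $s_0$ must serve as a root; this is determined by the matroid ${\sf M}$ together with the tight sets of the inequality. Second, apply Theorem \ref{edmondsarborescences} inside $D[U_{s_0}]$ to produce a spanning $s_0$-arborescence of $U_{s_0}$, verifying the required in-degree bound $d^-_A(X)\ge 1$ for $\emptyset\neq X\subseteq U_{s_0}-s_0$ directly from the matroid hypothesis. Third, select among all such $s_0$-arborescences one whose removal preserves the inequality with ${\sf M}/s_0$. The critical structural input here is that $X\mapsto r_{\sf M}(S)-r_{\sf M}(S_X)$ is supermodular while $X\mapsto d^-_A(X)$ is submodular, so the family of tight sets is closed under intersection and union; this lattice structure, combined with a local exchange argument between $s_0$-arborescences, should allow the simultaneous satisfaction of the residual inequality across all tight sets. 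Orchestrating this simultaneous selection, and in particular controlling the interaction between the two moves of contracting $s_0$ in ${\sf M}$ and deleting $T_0$ from $D$, is the technical heart of the argument.
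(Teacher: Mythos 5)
The paper you were compared against does not actually prove Theorem~\ref{thmSz}: it quotes it from \cite{Sz} and only remarks that it is the special case of Theorem~\ref{thmCsaba} in which every vertex is reachable from a basis, so your attempt has to stand on its own. Your necessity argument is correct. The sufficiency scheme, however, has a structural flaw that goes beyond the admitted incompleteness of the key lemma: contracting $s_0$ lowers the required root multiset size at \emph{every} vertex, while adjoining $T_0$ compensates only at the vertices of $T_0$. In an ${\sf M}$-based packing a vertex $v\notin V(T_0)$ must lie in $r_{\sf M}(S)$ arborescences whose roots form a basis of ${\sf M}$ avoiding $s_0$ (such bases exist whenever $s_0$ is not a coloop), whereas your induction hands such a $v$ only a basis of ${\sf M}/s_0$, of size $r_{\sf M}(S)-1$; the assembled packing is therefore not ${\sf M}$-based off $T_0$. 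A two-vertex example already kills the step: let $V=\{u,v\}$, $A=\emptyset$, $S=\{s_0,s_1\}$ with $s_0$ placed at $u$ and $s_1$ at $v$, and let ${\sf M}$ be the rank-$1$ uniform matroid on $\{s_0,s_1\}$. The condition of the theorem holds and the unique ${\sf M}$-based packing consists of the two single-vertex arborescences; but your key lemma forces $V(T_0)=\{u\}$, and in ${\sf M}/s_0$ the element $s_1$ becomes a loop, so the inductive call returns the empty packing and $v$ ends up in no arborescence at all. The fix would require deleting $s_0$ (rather than contracting it) at vertices off $T_0$, and no single matroid minor performs both operations, which is exactly why ``one arborescence per unit of rank'' inductions on $r_{\sf M}(S)$ do not appear in the literature on this theorem.

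Two secondary problems compound this. First, $U_{s_0}$ is not determined by ${\sf M}$ together with tight sets alone, as it also depends on reachability in $D$ (which elements of $S$ can reach which vertices), and even granting a definition, Edmonds' condition inside $D[U_{s_0}]$ does not follow from the hypothesis: the bound $d^-_A(X)\ge r_{\sf M}(S)-r_{\sf M}(S_X)$ counts arcs entering $X$ from outside $U_{s_0}$, which are unavailable to an $s_0$-arborescence whose vertex set is exactly $U_{s_0}$. Second, the concluding appeal to the lattice of tight sets plus ``a local exchange argument \dots should allow'' is where all the difficulty lives; the coupling between vertices through the matroid is precisely the hard part, and the known proofs resolve it differently --- the original argument in \cite{Sz} builds the packing by a Lov\'asz-style augmentation preserving the cut condition, and alternatively one can derive Theorem~\ref{thmSz} from Theorem~\ref{thmCsaba}: applying the hypothesis to $X=P_D^v$, where $d^-_A(P_D^v)=0$, yields $r_{\sf M}(S_{P_D^v})=r_{\sf M}(S)$ for every $v$, after which \eqref{csabicond} follows from $r_{\sf M}(S_{P_D^X})\le r_{\sf M}(S)$ and a matroid-reachability-based packing is ${\sf M}$-based. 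Either route would replace your inductive step; as written, the proposal does not yield a proof.
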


For the free matroid {\sf M}, Theorem \ref{thmSz} reduces to Theorem \ref{edmondsarborescences}.
\medskip

A common generalization of Theorems \ref{reach1} and \ref{thmSz} was found by Cs. Kir\'aly \cite{cskir}.

\begin{theo}[Cs. Kir\'aly \cite{cskir}] \label{thmCsaba}
Let $D=(V,A)$ be a  digraph, $S$ a multiset of  $V$ and ${\sf M}=(S,r_{\sf M})$ a matroid. 
There exists a  matroid-reachability-based packing of arborescences in $D$  if and only if  
	\begin{eqnarray}\label{csabicond}
		d^-_{A}(X)&\geq &r_{{\sf M}}(S_{P_D^X})-r_{{\sf M}}(S_X) \hskip .44truecm\text{ for every } X\subseteq V.
	\end{eqnarray}
\end{theo}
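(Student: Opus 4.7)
I would first dispatch the necessity by a matroid-counting argument. Fix $X\subseteq V$ and let $R_X$ denote the multiset of roots of arborescences in a given matroid-reachability-based packing that contain at least one vertex of $X$. Every such root reaches $X$, so $R_X\subseteq S_{P_D^X}$; conversely, for every $v\in X$ the roots of arborescences through $v$ form a basis of $S_{P_D^v}$ contained in $R_X$, so $R_X$ spans $S_{P_D^X}$ in $\mathsf{M}$. Since the arborescences are arc-disjoint, each root in $R_X\setminus S_X$ contributes a distinct arc of $\rho_A(X)$. Submodularity of $r_\mathsf{M}$ then gives
$$d_A^-(X)\;\geq\;|R_X\setminus S_X|\;\geq\;r_\mathsf{M}(R_X)-r_\mathsf{M}(R_X\cap S_X)\;\geq\;r_\mathsf{M}(S_{P_D^X})-r_\mathsf{M}(S_X).$$

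For sufficiency, my plan is induction on the number of strongly connected components of $D$. In the base case $D$ is strongly connected, so $P_D^X=V$ for every nonempty $X\subseteq V$, and (\ref{csabicond}) is exactly the hypothesis of Theorem \ref{thmSz}; the matroid-based packing it produces is automatically matroid-reachability-based because every vertex reaches every element of $S$. For the inductive step, pick a source strongly connected component $C$ of $D$. Inside $C$ we have $P_D^v=V(C)$ for every $v\in C$, so (\ref{csabicond}) restricted to subsets of $C$ becomes exactly the hypothesis of Theorem \ref{thmSz} for $D[C]$ with matroid $\mathsf{M}|_{S_C}$, which produces a matroid-based packing of arborescences inside $C$ rooted in $S_C$. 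These arborescences must be extended to cover $Q_D^C\setminus C$ via arcs leaving $C$, while the roots in $S\setminus S_C$ are handled by applying the inductive hypothesis to a suitably contracted pair $(D',\mathsf{M}')$ in which $C$ has been shrunk and the matroid contracted accordingly.

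The main obstacle is the inductive step: specifying the contracted digraph/matroid pair, verifying that (\ref{csabicond}) transfers to it (this uses supermodularity of $X\mapsto r_\mathsf{M}(S_{P_D^X})$ along the chain of strongly connected components and submodularity of $r_\mathsf{M}$ on restrictions), and carrying out the gluing so that at every $v\in V\setminus C$ the roots of arborescences through $v$ — coming from $S_C$ through the arcs $\rho_A(\overline C)\cap\delta_A(C)$ and from $S\setminus S_C$ through the inductively built outer packing — together form a basis of $S_{P_D^v}$. I expect this to be the crux of the argument and to require encoding the ``arcs leaving $C$'' as virtual roots in the contracted problem via a standard matroid-union/contraction construction.
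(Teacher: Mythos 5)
Your necessity argument is complete and correct: $R_X\subseteq S_{P_D^X}$, the spanning of $S_{P_D^X}$ by the union of the per-vertex bases, and the distinct entering arcs for the roots in $R_X\setminus S_X$ all work (the inequality $|R_X\setminus S_X|\ge r_{\mathsf M}(R_X)-r_{\mathsf M}(R_X\cap S_X)$ is the unit-increase property of rank rather than submodularity, but that is cosmetic). Be aware, though, that the paper does not prove Theorem \ref{thmCsaba} at all: it quotes it from \cite{cskir} and uses it as a black box (the paper's contribution is to derive Theorem \ref{lblbkjlk} \emph{from} it via the new orientation Theorem \ref{lknvlknlkklsz}), so the only question is whether your plan for sufficiency can be completed --- and there it has a genuine gap, which you yourself flag as ``the crux'' but do not resolve.

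The inductive step, as sketched, does not go through. Suppose you contract the source component $C$ to a vertex $c$ and obtain by induction a matroid-reachability-based packing of the contracted instance (all of $S_C$ now sitting at $c$; condition \eqref{csabicond} does transfer, that part is fine). An arborescence in that packing rooted at an element $s\in S_C$ may leave $c$ through \emph{several} arcs, whose tails after expansion are distinct vertices of $C$; to lift it back to $D$ you need an $s$-arborescence inside $C$ containing all of these prescribed tails, simultaneously for every $s\in S_C$, while the whole family inside $C$ remains a matroid-based packing (every $v\in C$ in arborescences whose roots form a basis of $S_C$). Theorem \ref{thmSz} applied to $(D[C],S_C,{\mathsf M}|_{S_C})$ gives no such control: it produces \emph{some} matroid-based packing, with no say over which arborescence contains which vertex beyond the basis condition, and an arbitrary inner packing need not be compatible with the prescribed exit tails (symmetrically, fixing the inner packing first need not extend outward). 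So the two halves interact, and the ``standard matroid-union/contraction construction'' you invoke is precisely what is missing. The known way to make component-peeling induction work --- used in \cite{HSz2} for the mixed hypergraph generalization, via \cite{FKLSzT}, and alluded to after Theorem \ref{reach1} --- is to strengthen the induction to packings of \emph{branchings} (or hyperbranchings) with root \emph{sets}: the entry points into the already-processed part are then absorbed as additional roots of a branching rather than forced into a single $s$-arborescence, and the inner tool becomes a matroid-based packing of branchings with prescribed roots, not Theorem \ref{thmSz}. Without that reformulation (or Cs.\ Kir\'aly's original, more delicate argument), your step 2 is an unproven claim, not a proof.
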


For the free matroid {\sf M}, Theorem \ref{thmCsaba} reduces to Theorem \ref{reach1}. When each vertex is reachable from a basis of {\sf M}, Theorem \ref{thmCsaba} reduces to Theorem \ref{thmSz}.
\medskip

Gao, Yang \cite{GY} provided another characterization of the existence of a matroid-reachability-based  packing of   arborescences.

\begin{theo}[Gao, Yang \cite{GY}]\label{lblbkjlkdir}
Let $D=(V,A)$ be a digraph, $S$ a multiset of  $V$ and ${\sf M}=(S,r_{\sf M})$ a matroid.
There exists a  matroid-reachability-based  packing of   arborescences  in $D$ if and only if  for every strongly connected component $C$ of $D$  
and every $X\subseteq P_D^C$ such that $X\cap C\neq\emptyset$ and $d^-_{A}(X-C)=0$,
\begin{eqnarray}\label{lknemlkbfljevdhuedir}
d_A^-(X)&\geq& r_{\sf M}(S_{P_D^C})-r_{\sf M}(S_X).
\end{eqnarray}
\end{theo}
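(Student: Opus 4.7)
My plan is to deduce Theorem \ref{lblbkjlkdir} from Cs.~Kir\'aly's Theorem \ref{thmCsaba} by proving the equivalence of the conditions \eqref{csabicond} and \eqref{lknemlkbfljevdhuedir}; the characterization then follows at once.

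For \eqref{csabicond} $\Longrightarrow$ \eqref{lknemlkbfljevdhuedir}, I would fix a strongly connected component $C$ of $D$ and a subset $X \subseteq P_D^C$ with $X \cap C \neq \emptyset$. The crux is the identity $P_D^X = P_D^C$: one inclusion holds because $P_D^C$ is closed under taking predecessors and contains $X$, while the reverse follows by routing any $v \in P_D^C$ through $C$, strongly connected, to a vertex of $X \cap C \subseteq X$. Substituting $P_D^X = P_D^C$ into \eqref{csabicond} applied at $X$ yields exactly \eqref{lknemlkbfljevdhuedir}; note that the technical hypothesis $d^-_A(X-C)=0$ plays no role in this direction -- it is precisely the condition that restricts the quantifier to the Gao-Yang form.

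For \eqref{lknemlkbfljevdhuedir} $\Longrightarrow$ \eqref{csabicond}, given an arbitrary $X \subseteq V$, the strategy is to pick a strongly connected component $C$ that is a sink in the condensation of $D$ restricted to $P_D^X$ with $X \subseteq P_D^C$ (so that $P_D^X = P_D^C$), and then to build an auxiliary set $X^*$ satisfying the Gao-Yang hypotheses so that the chain $d^-_A(X) \geq d^-_A(X^*) \geq r_{\sf M}(S_{P_D^C}) - r_{\sf M}(S_{X^*})$ telescopes to Kir\'aly's inequality for $X$. A natural candidate is $X^* = X \cup P_D^{X-C}$: since $C$ cannot reach $X-C$ without collapsing an SCC, we have $X^* - C = P_D^{X-C}$, which is source-closed, so $d^-_A(X^* - C) = 0$ and a direct arc count yields $d^-_A(X^*) \leq d^-_A(X)$. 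I expect the main obstacle to be twofold: controlling the matroid rank $r_{\sf M}(S_{X^*})$ when $X^*$ strictly enlarges $X$, so that the chain ends at $r_{\sf M}(S_{P_D^X}) - r_{\sf M}(S_X)$ and not at something weaker -- this forces the extension to add only ancestors whose matroid contribution is already spanned by $S_X$; and handling the case where $X$ meets several incomparable sink components of the condensation of $P_D^X$, so that no single $C$ verifies $X \subseteq P_D^C$. The latter I expect to dispatch via a minimum-counterexample uncrossing argument, leveraging the submodularity of $d^-_A(\cdot)$ and of $r_{\sf M}(S_\cdot)$ together with the identity $P_D^{Y \cup Z} = P_D^Y \cup P_D^Z$, to show that a minimum-cardinality violator cannot split across incomparable sinks, reducing to the single-sink case treated above.
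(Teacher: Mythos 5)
Your first direction is correct and coincides with the paper's: the identity $P_D^X=P_D^C$ is exactly what is used there, and you rightly observe that $d^-_A(X-C)=0$ is not needed for it. The converse direction, however, has a genuine gap at its core, namely the rank control you yourself flag. In the single-sink case your chain is $d^-_A(X)\ge d^-_A(X^*)\ge r_{\sf M}(S_{P_D^C})-r_{\sf M}(S_{X^*})$ with $X^*=X\cup P_D^{X-C}$, and to reach \eqref{csabicond} you would need $r_{\sf M}(S_{X^*})\le r_{\sf M}(S_X)$; but $X^*\supseteq X$, and the added ancestors carry matroid elements that need not be spanned by $S_X$, so your hope that "the extension adds only ancestors whose matroid contribution is already spanned by $S_X$" is simply false. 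Concretely, take $D$ with arcs $z\rightarrow a$, $a\rightarrow c$, $b\rightarrow c$, the free matroid with one element of $S$ at each vertex, and $X=\{a,c\}$, $C=\{c\}$: then $X^*=\{z,a,c\}$ and your chain only yields $d^-_A(X)\ge 4-3=1$, while \eqref{csabicond} demands $d^-_A(X)\ge r_{\sf M}(S_{P_D^X})-r_{\sf M}(S_X)=4-2=2$. So the single-auxiliary-set reduction provably delivers a strictly weaker inequality and cannot establish \eqref{csabicond}. Your fallback for the multi-sink case is also unsubstantiated: the violated quantity $d^-_A(X)-r_{\sf M}(S_{P_D^X})+r_{\sf M}(S_X)$ mixes submodular terms ($d^-_A$, $r_{\sf M}(S_\cdot)$) with a supermodular one ($-r_{\sf M}(S_{P_D^\cdot})$), so a minimum-counterexample uncrossing does not go through in the standard way.

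The paper avoids both problems by never trying to bound the rank set-by-set. It decomposes an arbitrary $X$ over \emph{all} strongly connected components $C_j$ meeting $X$ (indexed by $J$), setting $X_j=(X\cap C_j)\cup\bigcup\{P_D^{C_i}: i\in J-\{j\},\, C_i\subseteq P_D^{C_j}\}$, so each $X_j$ satisfies the Gao--Yang hypotheses; it proves $d_A^-(X)\ge\sum_{j\in J}d_A^-(X_j)$ (Claim \ref{axaxi}, by showing the entering arc sets are disjoint arcs entering $X$); and then the crux is Claim \ref{mspd}, proved by induction on $|J|$ with two applications of submodularity and monotonicity of $r_{\sf M}$: $\sum_{j\in J}\bigl(r_{\sf M}(S_{P_D^{C_j}})-r_{\sf M}(S_{X_j})\bigr)\ge r_{\sf M}(S_{P_D^X})-r_{\sf M}(S_X)$. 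This global rank accounting --- the sum over components dominates the target even though individual terms do not --- is precisely the ingredient your proposal lacks; in the example above it is the second component $\{a\}$, treated separately with its own Gao--Yang inequality, that restores the missing unit of rank.
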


Let us show that Theorems \ref{thmCsaba} and \ref{lblbkjlkdir} are equivalent. 

\begin{proof}
We have to prove that \eqref{csabicond} and \eqref{lknemlkbfljevdhuedir} are equivalent.
\medskip

\noindent \eqref{csabicond} {\boldmath $\Longrightarrow$} \eqref{lknemlkbfljevdhuedir}: If  \eqref{csabicond} holds then let  $C$ be  a strongly connected component of $D$ and $X\subseteq P_D^C$ such that $X\cap C\neq\emptyset$ and $d^-_{A}(X-C)=0$. 
Then, we have  $P_D^{X}=P_D^C$ 
and hence \eqref{csabicond} implies \eqref{lknemlkbfljevdhuedir}. 
\medskip

\noindent \eqref{lknemlkbfljevdhuedir} {\boldmath $\Longrightarrow$} \eqref{csabicond}: Now if \eqref{lknemlkbfljevdhuedir} holds then let $X$ be a subset of $V.$ Let $C_1,\dots, C_k$ be the strongly connected components of $D$ in a topological ordering that is if there exists an arc from $C_i$ to $C_j$ then $i<j$. 
Let 
\begin{eqnarray*}
\text{\boldmath$J$} &=&\{1\le j\le k:X\cap C_j\neq\emptyset\}, \\
\text{\boldmath${X}_j$} &=&(X\cap C_j)\cup\bigcup\limits_{\substack{i\in J-\{j\} \\ C_i\subseteq P_D^{C_j}}}P_D^{C_i} \hskip 1truecm \text{ for every } j\in J. 
\end{eqnarray*}
Note that $X_j\subseteq P_D^{C_j}$, $X_j\cap C_j\neq\emptyset$ and $d^-_{A}(X_j-C_j)=0$ for every $j\in J.$

\begin{claim}\label{axaxi}
$d_A^-(X)\ge\sum_{j\in J}d_A^-(X_j)$. 
\end{claim}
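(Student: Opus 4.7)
I would deduce the claim from the stronger statement that $\rho_A(X_j)\subseteq \rho_A(X)$ for every $j\in J$ and that the sets $\rho_A(X_j)$ ($j\in J$) are pairwise disjoint; summing their cardinalities then gives $d_A^-(X)\ge\sum_{j\in J}d_A^-(X_j)$.

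For the inclusion, I would take an arc $uv$ with $v\in X_j$ and $u\notin X_j$ and first pin down where $v$ lies. The already-noted fact $d^-_A(X_j-C_j)=0$ forbids $v\in X_j-C_j$, since otherwise $uv$ would itself enter $X_j-C_j$. Hence $v\in X_j\cap C_j=X\cap C_j\subseteq X$. To obtain $u\notin X$, assume the contrary: then $u$ belongs to $C_t$ for some $t\in J$. If $t=j$, then $u\in X\cap C_j\subseteq X_j$, contradicting $u\notin X_j$. If $t\neq j$, the arc $uv$ from $C_t$ into $C_j$ forces $C_t\subseteq P_D^{C_j}$ by the topological ordering of the strong components; but then $t\in J-\{j\}$ with $C_t\subseteq P_D^{C_j}$, so $u\in C_t\subseteq P_D^{C_t}\subseteq X_j$ directly from the definition of $X_j$, again a contradiction.

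For the pairwise disjointness, the previous step has the useful by-product that the head of every arc in $\rho_A(X_j)$ lies in $C_j$. Since the strong components $C_j$ ($j\in J$) are pairwise disjoint, no arc can simultaneously enter two different $X_j$'s.

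The only delicate point, I expect, is the case $t\neq j$ in the inclusion step: it leans on the key structural feature built into the definition of $X_j$, namely that $X_j$ absorbs the entire predecessor set $P_D^{C_i}$ of every $C_i$ with $i\in J-\{j\}$ lying upstream of $C_j$ in the condensation. Once this is combined with the topological ordering of strong components, the argument closes cleanly and no submodularity or inductive machinery is needed.
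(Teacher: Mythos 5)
Your proof is correct and takes essentially the same route as the paper's: both argue that every arc $uv$ entering some $X_j$ must have its head in $X\cap C_j$ and its tail outside $X$ (the tail case split on whether $u$'s component equals $C_j$ or is absorbed into $X_j$ via $P_D^{C_i}$), so $\rho_A(X_j)\subseteq\rho_A(X)$, and both conclude disjointness of the sets $\rho_A(X_j)$ from the disjointness of the head sets $X\cap C_j$. Your writeup is if anything slightly more explicit than the paper's, e.g., in deducing $v\in C_j$ from $d^-_A(X_j-C_j)=0$ and in separating the cases $t=j$ and $t\neq j$.
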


\begin{proof}
If $uv$ enters $X_j$ then $v\in X\cap C_j\subseteq X$ and $u\notin X_j.$ If $u\in X$ then $u\in X\cap C_{j'}$ for some $j'\in J$.  Since $C_{j'}$ is strongly connected, $u\in C_{j'}$ and $v\in X\cap C_j$, we have $C_{j'}\subseteq P_D^{C_j},$ so $u\in X_j$ which is a contradiction. It follows that $u\notin X,$ so $uv$ enters $X.$ Since $(X\cap C_j)\cap (X\cap C_{j'})=\emptyset$ for distinct $j,j'\in J,$ the claim follows.
\end{proof}

\begin{claim}\label{mspd}
$\sum_{j\in J}(r_{\sf M}(S_{P_D^{C_j}})-r_{\sf M}(S_{X_j}))\ge r_{\sf M}(S_{P_D^X})-r_{\sf M}(S_X).$ 
\end{claim}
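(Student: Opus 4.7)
The plan is to write $r_{\sf M}(S_{P_D^X}) - r_{\sf M}(S_X)$ as a telescoping sum obtained by introducing the sets $P_D^{C_j}$ for $j \in J$ one at a time in topological order, and to bound each telescoping increment using submodularity and monotonicity of the matroid rank function. The key elementary observation I rely on is that since $S_{A \cup B} = S_A \cup S_B$ and $S_{A \cap B} = S_A \cap S_B$ for every $A,B \subseteq V$, the set function $X \mapsto r_{\sf M}(S_X)$ is itself submodular and monotone on the subsets of $V$.

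Concretely, I enumerate $J = \{j_1, \dots, j_m\}$ with $j_1 < \dots < j_m$ (inheriting the topological ordering of $C_1,\dots,C_k$) and set $T_0 = X$ and $T_h = X \cup P_D^{C_{j_1}} \cup \dots \cup P_D^{C_{j_h}}$ for $h = 1,\dots,m$, so that $T_m = P_D^X$ (because $X \subseteq P_D^X = \bigcup_{j \in J} P_D^{C_j}$). Telescoping gives
\[
r_{\sf M}(S_{P_D^X}) - r_{\sf M}(S_X) \;=\; \sum_{h=1}^{m} \bigl(r_{\sf M}(S_{T_h}) - r_{\sf M}(S_{T_{h-1}})\bigr),
\]
and submodularity applied to $S_{T_{h-1}}$ and $S_{P_D^{C_{j_h}}}$ (whose union is $S_{T_h}$) bounds the $h$-th summand above by $r_{\sf M}(S_{P_D^{C_{j_h}}}) - r_{\sf M}(S_{T_{h-1} \cap P_D^{C_{j_h}}})$. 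It then suffices to show $X_{j_h} \subseteq T_{h-1} \cap P_D^{C_{j_h}}$, for then monotonicity gives $r_{\sf M}(S_{X_{j_h}}) \le r_{\sf M}(S_{T_{h-1} \cap P_D^{C_{j_h}}})$ and substituting upper-bounds the $h$-th summand by $r_{\sf M}(S_{P_D^{C_{j_h}}}) - r_{\sf M}(S_{X_{j_h}})$; summing over $h$ yields the claim.

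The main obstacle is therefore establishing $X_{j_h} \subseteq T_{h-1}$, the inclusion $X_{j_h} \subseteq P_D^{C_{j_h}}$ being immediate by definition. The part $X \cap C_{j_h}$ clearly lies in $X = T_0 \subseteq T_{h-1}$. For every $i \in J - \{j_h\}$ with $C_i \subseteq P_D^{C_{j_h}}$, the component $C_i$ reaches $C_{j_h}$, so the topological ordering of the strongly connected components forces $i < j_h$, hence $i \in \{j_1, \dots, j_{h-1}\}$ and $P_D^{C_i} \subseteq T_{h-1}$. This is exactly where the topological ordering is essential: without processing the members of $J$ in this order, one could not pass from $r_{\sf M}(S_{T_{h-1} \cap P_D^{C_{j_h}}})$ down to $r_{\sf M}(S_{X_{j_h}})$ by monotonicity, and the telescoping estimate would not close.
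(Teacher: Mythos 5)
Your proof is correct, and it reaches the inequality by a genuinely different (and somewhat lighter) decomposition than the paper's. The paper argues by induction on $|J|$: it peels off the largest index $\ell\in J$, applies the induction hypothesis to $X-C_\ell$, and then needs \emph{two} applications of submodularity (to $S_{P_D^{X_\ell}}$ and $S_{X_\ell\cup P_D^{X-X_\ell}}$, then to $S_{P_D^{X-C_\ell}}$ and $S_X$) together with a small system of four set inclusions and a monotonicity step to make the resulting brackets telescope; it also needs the base-case observation $P_D^{C_j}=P_D^{X_j}$. You instead unroll the recursion from the other end: with $T_h=X\cup P_D^{C_{j_1}}\cup\dots\cup P_D^{C_{j_h}}$ you write $r_{\sf M}(S_{P_D^X})-r_{\sf M}(S_X)$ as an explicit telescoping sum (using $P_D^X=\bigcup_{j\in J}P_D^{C_j}$, which is valid since each $C_j$ with $j\in J$ meets $X$ and is strongly connected) and bound each increment by a \emph{single} submodular exchange between $T_{h-1}$ and $P_D^{C_{j_h}}$, the only substantive verification being the containment $X_{j_h}\subseteq T_{h-1}\cap P_D^{C_{j_h}}$. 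That containment is exactly where the topological order enters, and your argument for it is sound: any $i\in J-\{j_h\}$ with $C_i\subseteq P_D^{C_{j_h}}$ reaches $C_{j_h}$, hence $i<j_h$ and $P_D^{C_i}$ has already been absorbed into $T_{h-1}$, while $X\cap C_{j_h}\subseteq T_0$; your preliminary observation that $Y\mapsto r_{\sf M}(S_Y)$ is submodular and monotone on $2^V$ (via $S_{A\cup B}=S_A\cup S_B$ and $S_{A\cap B}=S_A\cap S_B$) is also correct and is implicitly what the paper uses as well. The net trade-off: your version replaces the paper's double submodularity and four inclusions per inductive step by one submodular inequality plus one transparent inclusion, and it dispenses with a separate base case; the paper's formulation, in exchange, makes visible the recursive structure $X\mapsto X-C_\ell$ that mirrors the component-peeling used elsewhere in its equivalence proof.
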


\begin{proof}
We prove it by induction on $|J|.$ For $|J|=1$, say $J=\{j\},$ the claim follows from $P_D^{C_j}=P_D^{X_j}.$ Suppose that the inequality holds for $|J|-1$. Let $\ell$ be the largest value in $J.$ 
Note that we have
\begin{eqnarray*}
P_D^{X_\ell}\cap (X_\ell\cup P_D^{X-X_\ell})&\supseteq &X_\ell, \hskip 1truecm
P_D^{X-C_\ell}\cap X\ \ \supseteq \ \ X-C_\ell,\\
P_D^{X_\ell}\cup (X_\ell\cup P_D^{X-X_\ell})&\supseteq &P_D^X, \hskip .9truecm
P_D^{X-C_\ell}\cup X\ \ \supseteq \ \ X_\ell\cup P_D^{X-C_\ell}.
\end{eqnarray*}
Then, by induction, submodularity of $r_{\sf M}$, first for $S_{P_D^{X_\ell}}$ and $S_{X_\ell\cup P_D^{X-X_\ell}}$, then for $S_{P_D^{X-C_\ell}}$ and $S_X,$ and monotonicity of $r_{\sf M}$, we have 
\begin{eqnarray*}
\sum_{j\in J}(r_{\sf M}(S_{P_D^{C_j}})-r_{\sf M}(S_{X_j}))	 &	\ge 	&	(r_{\sf M}(S_{P_D^{X_\ell}})-r_{\sf M}(S_{X_\ell}))   +   (r_{\sf M}(S_{P_D^{X-C_\ell}})-r_{\sf M}(S_{X-C_\ell}))\\
&	\ge 	&	(r_{\sf M}(S_{P_D^X})-r_{\sf M}(S_{X_\ell\cup P_D^{X-X_\ell}}))  + (r_{\sf M}(S_{X_\ell\cup P_D^{X-C_\ell}})-r_{\sf M}(S_X))\\
&	\ge	&	r_{\sf M}(S_{P_D^X})-r_{\sf M}(S_X),
\end{eqnarray*}
and the claim follows.
\end{proof}

By Claim \ref{axaxi},  \eqref{lknemlkbfljevdhuedir} applied for all $X_j,$ and Claim \ref{mspd}, we get that 
\begin{eqnarray*}
d_A^-(X)\ge\sum_{j\in J}d_A^-(X_j)\ge\sum_{j\in J}(r_{\sf M}(S_{P_D^{C_j}})-r_{\sf M}(S_{X_j}))\ge r_{\sf M}(S_{P_D^X})-r_{\sf M}(S_X),
\end{eqnarray*}
so \eqref{csabicond} holds. 
\end{proof}

A common generalization of Theorems \ref{mattan} and \ref{lblbkjlkdir} was provided by Gao, Yang \cite{GY}.

\begin{theo}[Gao, Yang \cite{GY}]\label{lblbkjlk}
Let $F=(V,E\cup A)$ be a mixed graph, $S$ a multiset of  $V$ and ${\sf M}=(S,r_{\sf M})$ a matroid.
There exists a  matroid-reachability-based  packing of  mixed arborescences  in $F$ if and only if  for every strongly connected component $C$ of $F$ and every  set ${\cal P}$ of subsets of $P_F^C$ such that $Z\cap C\neq\emptyset$ and $e_{E\cup A}(Z-C)=0$ for every $Z\in{\cal P}$ and $Z\cap Z'\cap C=\emptyset$ for every $Z,Z'\in{\cal P}$,
\begin{eqnarray}\label{lknemlkbfljevdhue}
e_{E\cup A}({\cal P})&\geq&\sum_{Z\in {\cal P}}(r_{\sf M}(S_{P_F^C})-r_{\sf M}(S_Z)).
\end{eqnarray}
\end{theo}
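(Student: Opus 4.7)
The plan is to reduce Theorem \ref{lblbkjlk} to its directed counterpart (Theorem \ref{lblbkjlkdir}, which is equivalent to Kir\'aly's Theorem \ref{thmCsaba}) through a new orientation theorem, exactly as announced in the introduction.

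For necessity, a matroid-reachability-based packing of mixed arborescences in $F$ is by definition an orientation $\vec E$ of $E$ such that $D := (V, \vec E \cup A)$ admits a matroid-reachability-based packing of arborescences. Applying Theorem \ref{lblbkjlkdir} to $D$, I would derive \eqref{lknemlkbfljevdhue} as follows: given an admissible pair $(C,\mathcal{P})$ in $F$, for each $Z \in \mathcal{P}$ I pick a strongly connected component $C_Z$ of $D$ inside $C$ with $Z \cap C_Z \neq \emptyset$ (possible because the SCCs of $D$ refine those of $F$), invoke the digraph inequality on a suitable truncation of $Z$ relative to $C_Z$ (mirroring the $X_j$ construction in the proof of \eqref{csabicond}$\iff$\eqref{lknemlkbfljevdhuedir}), use $P_D^{C_Z} \subseteq P_F^C$ together with monotonicity of $r_{\sf M}$, and finally sum over $Z \in \mathcal{P}$, employing the disjointness $Z \cap Z' \cap C = \emptyset$ to upper-bound the sum of in-degrees by $e_{E\cup A}(\mathcal{P})$.

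For sufficiency, the heart of the proof is a new orientation lemma: if $F, S, {\sf M}$ satisfy \eqref{lknemlkbfljevdhue} for every admissible $(C, \mathcal{P})$, then $E$ has an orientation $\vec E$ such that $D := (V, \vec E \cup A)$ has the same strongly connected components as $F$ (so $P_D^v = P_F^v$ for every $v \in V$) and satisfies \eqref{lknemlkbfljevdhuedir}. Once this lemma is established, Theorem \ref{lblbkjlkdir} applied to $D$ yields a matroid-reachability-based packing of arborescences in $D$ which, by the definition recalled in Section 2, is a matroid-reachability-based packing of mixed arborescences in $F$. To build the required orientation I would first orient the edges inside each strongly connected component of $F$ so that the component remains strongly connected, by a Robbins / Boesch--Tindell style argument, and then orient the remaining edges via a classical orientation theorem of Frank for mixed graphs under intersecting supermodular in-degree lower bounds applied to
\[
p(X) \;=\; r_{\sf M}(S_{P_F^{C(X)}}) - r_{\sf M}(S_X)
\]
on the family of admissible sets (extended by $0$ elsewhere), where $C(X)$ denotes the SCC of $F$ witnessing admissibility of $X$. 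The hypothesis \eqref{lknemlkbfljevdhue} is precisely the subpartition-style inequality that Frank's theorem requires.

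The main obstacle will be verifying the intersecting supermodularity of $p$ on the admissible family --- submodularity of $r_{\sf M}$ takes care of the matroid-rank terms, but the dependence of $p$ on $P_F^{C(X)}$ and on the side-conditions $Z \cap C \neq \emptyset$, $e_{E\cup A}(Z-C) = 0$ require careful book-keeping when $X,Y$ are admissible for possibly distinct components $C(X), C(Y)$ --- and then reconciling the orientation produced by Frank's theorem with the pre-orientation that keeps each SCC of $F$ strongly connected, so that the conclusion $P_D^v = P_F^v$ genuinely holds.
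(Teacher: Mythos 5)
Your high-level plan (reduce to Theorem \ref{thmCsaba} via a new orientation theorem) is indeed the paper's announced strategy, but the orientation lemma you formulate is false, and this is a genuine gap. You require an orientation $\vec E$ such that $D=(V,\vec E\cup A)$ \emph{has the same strongly connected components as $F$}, so that $P_D^v=P_F^v$. No such orientation need exist: by the Robbins/Boesch--Tindell obstruction, a strongly connected mixed graph containing a bridge has no strongly connected orientation, and hypothesis \eqref{lknemlkbfljevdhue} does not exclude bridges inside components. Concretely, take $V=\{u,v\}$, a single undirected edge $uv$, $S=\{u\}$, ${\sf M}$ free: then $F$ is one strongly connected component, \eqref{lknemlkbfljevdhue} holds, and orienting $u\to v$ yields the required packing, yet every orientation destroys the component $\{u,v\}$. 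The idea you are missing is that one should \emph{not} try to preserve reachability sets, only reachability \emph{ranks}, and that this comes for free: the paper's Theorem \ref{lknvlknlkklsz} produces an orientation satisfying only $d^-_{\vec E\cup A}(X)\ge h(X)-h(P_F^X)$ with $h(X)=-r_{\sf M}(S_X)$ (reachability computed in $F$, not in the orientation); applying this inequality to the set $P_{\vec F}^X$, which has in-degree $0$ in $\vec F$ and satisfies $P_F^{P_{\vec F}^X}\supseteq P_F^X$, forces $r_{\sf M}(S_{P_{\vec F}^X})=r_{\sf M}(S_{P_F^X})$ by monotonicity. This equality both turns \eqref{hgjgfxhfdwfdsz} into \eqref{csabicond} for $\vec F$ and guarantees that a basis of $S_{P_{\vec F}^v}$ is a basis of $S_{P_F^v}$, so the directed packing translates back to $F$.

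A second, related problem is your plan to apply Frank's orientation theorem once, globally, to $p(X)=r_{\sf M}(S_{P_F^{C(X)}})-r_{\sf M}(S_X)$ extended by $0$ off the admissible family: the function $X\mapsto h(X)-h(P_F^X)$ is not intersecting supermodular on all of $V$ (your own ``careful book-keeping'' worry is exactly where this dies, e.g.\ for pairs admissible for distinct components), and Theorem \ref{hypgraphcoveringhorient} moreover requires $h(V)=0$. The paper circumvents both difficulties by induction on $|V|$: it peels off a sink strongly connected component $C$ (one with $e_{E\cup A}(\overline C)=0$), orients the rest inductively, and on $C$ replaces the reachability-dependent bound by $h_2(X)=\max\{h(Y)-d_A^-(Y): Y\subseteq P_F^C,\ Y\cap C=X,\ e_{E\cup A}(Y-C)=0\}$, which is intersecting supermodular on $C$ (Claim \ref{bvesszoszubmod}, due to Gao--Yang); after normalizing by $-h(P_F^C)$ one gets $h'(C)=0$, so Theorem \ref{hypgraphcoveringhorient} applies to the undirected graph induced on $C$, and a submodularity computation glues the two orientations. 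Your necessity sketch is workable but roundabout; it follows more directly by summing \eqref{hgjgfxhfdwfdsz} over $Z\in\mathcal P$, using $P_F^Z=P_F^C$ and the fact that in the orientation each arc enters at most one member of $\mathcal P$.
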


For the {\it free} matroid {\sf M}, that is every set of $S$ is independent in {\sf M}, Theorem \ref{lblbkjlk} reduces to Theorem \ref{mattan}.
For $E=\emptyset,$ Theorem \ref{lblbkjlk} reduces to Theorem \ref{lblbkjlkdir}.
H\"orsch, Szigeti \cite{HSz2}  pointed out that  Theorem \ref{lblbkjlk} holds for mixed hypergraphs.  That more general result was proved in \cite{HSz2} by induction using a result on matroid-based packing of mixed hyperbranchings in mixed hypergraphs from \cite{FKLSzT}.
Here we propose another approach to prove Theorem \ref{lblbkjlk}. It will be derived from its directed version (Theorem \ref{thmCsaba}) using a new orientation result (Theorem \ref{lknvlknlkklsz}). 
To prove   Theorem \ref{lknvlknlkklsz} we  need a result of Frank, see Theorem 15.4.13 in \cite{book}.

\begin{theo}[Frank \cite{book}]  \label{hypgraphcoveringhorient}
Let $G=(V,E)$ be a graph and $h$ an integer-valued intersecting supermodular set function  such that $h(V)=0.$ There exists an  orientation $\vec G=(V,\vec E)$ of  $G$ such that 
	\begin{eqnarray} \label{coverh}
		d^-_{\vec E}(X)&\ge &h(X) \hskip .4truecm\text{ for every } X\subseteq V \hskip 1.4truecm
	 \end{eqnarray}
if and only if
	\begin{eqnarray} \label{condhyph}
		e_{E}({\cal P}) &\geq& \sum_{X\in {\cal P}}h(X) \hskip .4truecm\text{ for every subpartition } {\cal P} \text{ of } V.
	 \end{eqnarray}
\end{theo}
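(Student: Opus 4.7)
The plan is to prove the two directions separately, with necessity being a short counting argument and sufficiency following by induction on $|E|$ together with an uncrossing step.

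For necessity, suppose $\vec G=(V,\vec E)$ satisfies \eqref{coverh}, and fix a subpartition $\mathcal P$ of $V$. Since the members of $\mathcal P$ are pairwise disjoint and every oriented edge has a unique head, each edge of $E$ enters at most one member of $\mathcal P$ in $\vec G$. Hence $\sum_{X\in\mathcal P} d^-_{\vec E}(X)\le e_E(\mathcal P)$, and combining with \eqref{coverh} yields \eqref{condhyph}.

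For sufficiency, I would induct on $|E|$. In the base case $|E|=0$, applying \eqref{condhyph} to each singleton subpartition $\{X\}$ gives $h(X)\le 0$, so the empty orientation satisfies \eqref{coverh}. For the inductive step, pick an edge $e=uv\in E$ and try to orient it as $u\to v$, defining the residual $h'(X):=h(X)-1$ when $v\in X$ and $u\notin X$, and $h'(X):=h(X)$ otherwise. The function $h-h'$ is the cut indicator of the arc $(u,v)$, hence submodular, so $h'$ remains intersecting supermodular with $h'(V)=0$. If \eqref{condhyph} holds for $h'$ on $G-e$, apply induction; otherwise, try the opposite orientation $v\to u$.

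The main obstacle is showing that at least one orientation of $e$ is feasible. If neither works, there exist subpartitions $\mathcal P_1$ and $\mathcal P_2$ witnessing infeasibility for the two residuals, and one must combine them into a single subpartition of $V$ violating \eqref{condhyph} for the original $h$, contradicting the hypothesis. A crossing pair $X\in \mathcal P_1$, $Y\in \mathcal P_2$ is replaced by $X\cap Y$ and $X\cup Y$ using intersecting supermodularity via $h(X\cap Y)+h(X\cup Y)\ge h(X)+h(Y)$; the delicate part is to track how the contribution of $e$ (counted in both $\mathcal P_1$ and $\mathcal P_2$) combines with the contributions of the remaining edges of $E-e$ to the new subpartition, in order to extract a strict violation. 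This is the classical uncrossing argument underlying Frank's theorem, which can alternatively be recovered from the Edmonds--Giles theorem on integral submodular flows.
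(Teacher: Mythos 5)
First, a point of comparison: the paper does not prove this statement at all --- it is imported as Theorem 15.4.13 of Frank's book \cite{book} and used as a black box (the paper only shows afterwards that it is \emph{equivalent} to the new orientation result, Theorem \ref{lknvlknlkklsz}). So your attempt has to stand on its own as a proof of Frank's theorem. Your necessity direction does: since the members of $\mathcal P$ are disjoint and each arc has one head, every arc enters at most one member, so $\sum_{X\in\mathcal P}d^-_{\vec E}(X)\le e_E(\mathcal P)$, and \eqref{coverh} gives \eqref{condhyph}. That half is complete and correct.

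The sufficiency direction, however, has a genuine gap: the step you label ``the delicate part'' is the entire content of the theorem, and your sketch of it would not go through as stated. Two concrete problems. (i) If neither orientation of $e=uv$ preserves \eqref{condhyph} for the residual function on $G-e$, an integer counting argument (using that $(G,h)$ itself satisfies \eqref{condhyph}, that $e$ enters at most one member, and that the residual decreases $h$ on at most one member) shows the witnesses $\mathcal P_1,\mathcal P_2$ are in fact \emph{tight} for the original $h$, i.e.\ $e_E(\mathcal P_i)=\sum_{X\in\mathcal P_i}h(X)$, with some $X_1\in\mathcal P_1$ containing $u$ but not $v$ and some $Y_1\in\mathcal P_2$ containing $v$ but not $u$. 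So you do not have two violating subpartitions to merge; you must manufacture a strictly violating subpartition out of two tight ones, and that is where all the work lies. (ii) The proposed local move --- replacing a crossing pair $X\in\mathcal P_1$, $Y\in\mathcal P_2$ by $X\cap Y$ and $X\cup Y$ --- does not preserve the subpartition property, since $X\cup Y$ may meet other members of both families; intersecting supermodularity is only available when $X\cap Y\neq\emptyset$; and you never verify that $e_{E-e}$ does not increase under the exchange, nor that the procedure terminates, nor that the final family avoids double-counting $e$ so as to yield a \emph{strict} violation. As written this is a plan, not a proof. A cleaner route, closer in spirit to how the paper itself argues in Section \ref{hyperarbor}, is to note that the in-degree vectors $m$ of orientations of $G$ are exactly the integral elements of a base polyhedron, and that \eqref{coverh} rewrites as $m(X)\ge h(X)+i_G(X)$ for all $X\subseteq V$ (with $i_G(X)$ the number of edges inside $X$), an intersecting supermodular lower bound; Frank's g-polymatroid intersection theorem then produces precisely the subpartition condition \eqref{condhyph}. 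Your closing alternative via Edmonds--Giles is also viable, but if you invoke it, the whole edge-by-edge induction becomes superfluous --- you should either carry out the uncrossing in full detail or commit to one of these polyhedral derivations.
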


We can now present and prove our first contribution, a new orientation theorem. It will allow us to reduce Theorem \ref{lblbkjlk} to Theorem \ref{thmCsaba}.
The motivation of the use of $h(X)-h(P_F^X)$ in \eqref{hgjgfxhfdwfdsz} is the following. In order to be able to apply Theorem \ref{thmCsaba} we want to find an orientation $\vec F=(V,\vec E\cup A)$ of a mixed graph $F=(V,E\cup A)$ such that \eqref{csabicond} holds in $\vec F$, that is $d^-_{\vec{E}\cup A}(X) \ge r_{{\sf M}}(S_{P_{\vec F}^X})-r_{{\sf M}}(S_X)$ and $r_{{\sf M}}(S_{P_{\vec F}^X})=r_{{\sf M}}(S_{P_F^X})$ for every $X\subseteq V$ or equivalently  $d^-_{\vec{E}\cup A}(X) \ge r_{{\sf M}}(S_{P_F^X})-r_{{\sf M}}(S_X)$ for every $X\subseteq V$ which is \eqref{hgjgfxhfdwfdsz} for $h(X)=-r_{{\sf M}}(S_X)$.

\begin{theo}\label{lknvlknlkklsz}
Let $F=(V,E\cup A)$ be a mixed graph and $h$ an integer-valued intersecting supermodular set function on $V.$
There exists an orientation  $\vec E$ of $E$ such that  	
	\begin{eqnarray}\label{hgjgfxhfdwfdsz}
		d^-_{\vec{E}\cup A}(X)&\geq &h(X)-h(P_F^X) \hskip .44truecm\text{ for every } X\subseteq V
	\end{eqnarray}
if and only if  for every strongly connected component $C$ of $F$ and every set ${\cal P}$ of subsets of $P_F^C$ such that $Z\cap C\neq\emptyset, e_{E\cup A}(Z-C)=0$ for every $Z\in{\cal P};$ and $Z\cap Z'\cap C=\emptyset$ for every $Z,Z'\in{\cal P}$,
\begin{eqnarray}\label{mglvjhdfufehomf}
e_{E\cup A}({\cal P})&\geq& \sum_{Z\in {\cal P}}(h(Z)-h(P_F^C)).\hskip 1.8truecm
\end{eqnarray}
\end{theo}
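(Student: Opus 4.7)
For necessity, I would argue directly. Given an orientation $\vec E$ of $E$ satisfying \eqref{hgjgfxhfdwfdsz}, fix an SCC $C$ and an admissible family $\mathcal{P}$. For every $Z\in\mathcal{P}$ one has $P_F^Z = P_F^C$: the inclusion $P_F^Z\subseteq P_F^C$ follows from $Z\subseteq P_F^C$, while the reverse follows from $Z\cap V(C)\ne\emptyset$ together with the strong connectedness of $C$. Hence \eqref{hgjgfxhfdwfdsz} applied to $Z$ gives $d^-_{\vec E\cup A}(Z)\ge h(Z)-h(P_F^C)$. The condition $e_{E\cup A}(Z-V(C))=0$ is preserved after orientation, so the head of every arc of $\vec E\cup A$ entering $Z$ must lie in $Z\cap V(C)$; combined with $Z\cap Z'\cap V(C)=\emptyset$ for distinct $Z,Z'\in\mathcal{P}$, each entering arc contributes to at most one $d^-_{\vec E\cup A}(Z)$. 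Summing yields $e_{E\cup A}(\mathcal{P})\ge\sum_{Z\in\mathcal{P}}d^-_{\vec E\cup A}(Z)\ge\sum_{Z\in\mathcal{P}}(h(Z)-h(P_F^C))$, which is \eqref{mglvjhdfufehomf}.

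For sufficiency, my plan is to apply Frank's orientation theorem (Theorem \ref{hypgraphcoveringhorient}) component by component. A key observation is that every edge of $E$ lies inside a single SCC of $F$: an edge yields two-way mixed reachability between its endpoints, so they must belong to the same SCC. Thus $E=\bigcup_C E(C)$ where $E(C)$ consists of the edges inside the SCC $C$, and we may orient each $E(C)$ independently. For every SCC $C$, define $h_C:2^{V(C)}\to\mathbb{Z}$ by $h_C(X)=h(X)-h(P_F^C)-d^-_A(X)$ for $\emptyset\ne X\subsetneq V(C)$, and $h_C(\emptyset)=h_C(V(C))=0$.

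The next step is to verify the hypotheses of Frank's theorem for $(V(C),E(C))$ and $h_C$. First, $h_C$ is intersecting supermodular on $V(C)$: this follows from intersecting supermodularity of $h$ restricted to $V(C)$, the submodularity of $d^-_A$, and $-h(P_F^C)$ being constant. Second, setting $h_C(V(C))=0$ preserves intersecting supermodularity (raising the value at $V(C)$ only relaxes the relevant inequalities) and is consistent with the orientation: applying \eqref{mglvjhdfufehomf} to $\mathcal{P}=\{V(C)\}$ gives $d^-_A(V(C))\ge h(V(C))-h(P_F^C)$ (no edge can enter a whole SCC), so the original value was already nonpositive. Third, Frank's subpartition condition holds: for a subpartition $\mathcal{Q}$ of $V(C)$, each $X\in\mathcal{Q}$ satisfies $X\subseteq V(C)\subseteq P_F^C$ with $X-V(C)=\emptyset$, so \eqref{mglvjhdfufehomf} applied to $\mathcal{P}=\mathcal{Q}$ gives $e_{E\cup A}(\mathcal{Q})\ge\sum_X(h(X)-h(P_F^C))$, and using $e_{E\cup A}(\mathcal{Q})=e_{E(C)}(\mathcal{Q})+\sum_X d^-_A(X)$ (edges stay inside SCCs, arcs hit at most one member of a subpartition) this rearranges to $e_{E(C)}(\mathcal{Q})\ge\sum_X h_C(X)$. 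Frank's theorem yields an orientation $\vec E(C)$ of $E(C)$ with $d^-_{\vec E(C)}(X)\ge h_C(X)$ for every $X\subseteq V(C)$; set $\vec E:=\bigcup_C\vec E(C)$.

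It remains to verify \eqref{hgjgfxhfdwfdsz} for every $X\subseteq V$. When $X\subseteq V(C)$ for a single SCC $C$, one has $P_F^X=P_F^C$ and $d^-_{\vec E\cup A}(X)=d^-_{\vec E(C)}(X)+d^-_A(X)\ge h_C(X)+d^-_A(X)=h(X)-h(P_F^X)$. The main obstacle, which I expect to be the technical heart of the proof, is the multi-SCC case: when $X$ meets several SCCs $C_1,\dots,C_m$, one has $d^-_{\vec E\cup A}(X)=\sum_i d^-_{\vec E\cup A}(X\cap V(C_i))-\alpha$, where $\alpha$ is the number of arcs of $A$ internal to $X$ that cross between distinct SCCs, and one must combine the per-SCC bounds $d^-_{\vec E\cup A}(X\cap V(C_i))\ge h(X\cap V(C_i))-h(P_F^{C_i})$ with intersecting supermodularity of $h$ applied to the sets $P_F^{C_i}$ (whose intersections and unions are governed by the SCC DAG, with $P_F^X=\bigcup_i P_F^{C_i}$) to recover $d^-_{\vec E\cup A}(X)\ge h(X)-h(P_F^X)$. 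This reduction to the single-SCC case via the SCC-DAG structure is the delicate step I would work out carefully.
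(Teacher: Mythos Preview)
Your necessity argument is correct and coincides with the paper's. For sufficiency, however, there is a genuine gap, and it lies exactly in the step you flag as ``the technical heart''.

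The function $h_C(X)=h(X)-h(P_F^C)-d^-_A(X)$ that you feed into Frank's theorem is too weak. It yields only
\[
d^-_{\vec E(C)}(X)+d^-_A(X)\ \ge\ h(X)-h(P_F^C)\qquad\text{for }X\subseteq V(C),
\]
and your multi-SCC plan then has to combine such bounds for the \emph{disjoint} pieces $X_i=X\cap V(C_i)$. Intersecting supermodularity says nothing about disjoint sets, so the inequality $\sum_i\bigl(h(X_i)-h(P_F^{C_i})\bigr)-\alpha\ge h(X)-h(P_F^X)$ that you would need does not follow, and there is no way to repair this with supermodularity of the $P_F^{C_i}$ alone.

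The paper avoids this in two ways. First, it does not treat all SCCs simultaneously but proceeds by induction on $|V|$, peeling off a sink SCC $C$ (one with $e_{E\cup A}(\overline C)=0$); the inductive hypothesis already gives \eqref{hgjgfxhfdwfdsz} for every $X\subseteq V\setminus C$, so only sets meeting $C$ remain. Second, and this is the point you are missing, the function used on $C$ is not your $h_C$ but
\[
h_2(X)=\max\bigl\{\,h(Y)-d^-_A(Y):\ Y\subseteq P_F^C,\ Y\cap C=X,\ e_{E\cup A}(Y\setminus C)=0\,\bigr\}.
\]
Because this is a maximum over all admissible extensions $Y$ of $X$ into $P_F^C\setminus C$, the resulting orientation satisfies $d^-_{\vec E_2}(Y\cap C)+d^-_A(Y)\ge h(Y)-h(P_F^C)$ for every such $Y$, not just for $Y\subseteq C$. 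In the final verification the paper does \emph{not} sum over the pieces $X\cap C$ and $X\setminus C$; instead it sets $Z=P_F^{X\setminus C}$ and $W=(Z\cap P_F^C)\cup(X\cap C)$, applies the inductive bound to $X\setminus C$ and the $h_2$-bound to $W$, and then uses intersecting supermodularity twice, on the intersecting pairs $(X,Z)$ and $(P_F^C,X\cup Z)$. Your bound only controls $X\cap C$, which is disjoint from everything else involved, and that is precisely why your sketch cannot be completed as stated.
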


\begin{proof} 
To prove the {\bf necessity}, let  {\boldmath$\vec E$} be an orientation  of $E$ such that \eqref{hgjgfxhfdwfdsz} holds, {\boldmath$C$} a strongly connected component of $F$ 
 and {\boldmath${\cal P}$} a set of subsets of $P_F^C$ such that $Z\cap C\neq\emptyset, e_{E\cup A}(Z-C)=0$ for every $Z\in{\cal P};$ and $Z\cap Z'\cap C=\emptyset$ for all $Z,Z'\in{\cal P}.$ It follows that $\rho_{\vec{E}\cup A}(Z)\cap\rho_{\vec{E}\cup A}(Z')=\emptyset$  and $P_F^Z=P_F^C$ for all $Z,Z'\in{\cal P}.$ Then, by \eqref{hgjgfxhfdwfdsz} applied for every $Z\in {\cal P}$, we obtain  	\eqref{mglvjhdfufehomf} because 
 \begin{eqnarray}
 		e_{E\cup A}({\cal P})\ge e_{\vec{E}\cup A}({\cal P})=\sum_{Z\in {\cal P}}d_{\vec{E}\cup A}(Z)\ge \sum_{Z\in {\cal P}}(h(Z)-h(P_F^C)).
 	\end{eqnarray}
 
To prove the {\bf sufficiency}, let ({\boldmath $F$} = ({\boldmath $V$}, {\boldmath $E \cup A$}), {\boldmath $h$}) be a  counterexample for Theorem \ref{lknvlknlkklsz} that minimizes $|V|$. Let {\boldmath $C$} be a strongly connected component of $F$ such that  $e_{E\cup A}(\overline C)=0$. Let ({\boldmath $F_1$} = ({\boldmath $V_1$}, {\boldmath $E_1\cup A_1$}), {\boldmath $h_1$}) be 
obtained from $(F,h)$ by deleting the elements in $C.$
As $e_{E\cup A}(\overline C)=0$, we have $e_{E_1\cup A_1}(X)=e_{E\cup A}(X)$, $P_{F_1}^X=P_F^X$ and $h_1(X)=h(X)$ for every $X\subseteq V_1$. Then, since $(F,h)$ satisfies \eqref{mglvjhdfufehomf}, so does $(F_1,h_1).$ Hence, by the minimality of $(F,h)$,  there exists an orientation  {\boldmath$\vec E_1$} of $E_1$ such that 
	\begin{eqnarray}\label{kbxjhsvhchcvkjsz}
		d^-_{\vec{E}_1\cup A_1}(X)&\geq &h(X)-h(P_F^X) \hskip .44truecm\text{ for every } X\subseteq V_1.
	\end{eqnarray}
Let us now consider the subgraph {\boldmath $F_2$} = ({\boldmath $C$}, {\boldmath $E_2\cup A_2$}) of $F$ induced by $C$. Moreover, let us define {\boldmath$h_2$}$(X)=\max\{h(Y)-d_A^-(Y): Y\subseteq P_F^C, Y\cap C=X, e_{E\cup A}(Y-C)=0\}$ for every $\emptyset\neq X\subseteq C.$  For any non-empty set $X_i$ in $C,$ let {\boldmath$Y_i$} be a set that provides $h_2(X_i).$
Gao, Yang \cite{GY} proved that $h_2$ is intersecting supermodular.
\begin{claim}\label{bvesszoszubmod}
 $h_2$ is an intersecting supermodular set function on  $C.$
\end{claim}

\begin{proof}
For intersecting sets $X_1$ and $X_2$ in $C,$ let {\boldmath$X_3$} $=X_1\cap X_2$, {\boldmath$X_4$} $=X_1\cup X_2$, {\boldmath$Y'_3$} $=Y_1\cap Y_2$ and {\boldmath$Y'_4$} $=Y_1\cup Y_2.$ Note that, for $i=3,4,$ we have $Y'_i\subseteq P_F^C,Y'_i\cap C=X_i$ and $e_{E\cup A}(Y'_i-C)=0,$ and hence $h(Y'_i)-d_A^-(Y'_i)\le h_2(X_i).$ Then, by the intersecting supermodularity of $h$ and $-d_A^-$, we get that  
\begin{eqnarray*}
h_2(X_1)+h_2(X_2)		&	=	&	h(Y_1)-d_A^-(Y_1)+h(Y_2)-d_A^-(Y_2)\\ 
					&	\le 	&	h(Y'_3)-d_A^-(Y'_3)+h(Y'_4)-d_A^-(Y'_4)\\
					&	\le 	&	h_2(X_3)+h_2(X_4)\\
					&	 =	&	h_2(X_1\cap X_2)+h_2(X_1\cup X_2),
\end{eqnarray*}
so $h_2$ is intersecting supermodular.
\end{proof}

Let {\boldmath$h'$} be defined by $h'(X)=h_2(X)-h(P_F^C)$ for every $\emptyset\neq X\subseteq C$ and $h'(\emptyset)=0.$ By the Claim \ref{bvesszoszubmod}, $h'$ is intersecting supermodular on $C.$ Let {\boldmath${\cal P}$} $=\{X_1,\dots,X_t\}$ be a subpartition of $C$ 
and {\boldmath${\cal P}'$} $=\{Y_i:X_i\in{\cal P}\}$. Then ${\cal P}'$ is a set of subsets of $P_F^C$ such that $Y_i\cap C\neq\emptyset$ and $e_{E\cup A}(Y_i-C)=0$ for $1\le i\le t$ and $Y_i\cap Y_j\cap C=\emptyset$ for $1\le i<j\le t$. It follows, by \eqref{mglvjhdfufehomf}, that 
\begin{eqnarray*}
e_{E_2}({\cal P})&=&e_{E\cup A}({\cal P}')-e_{A}({\cal P}')\\
&\ge &\sum_{Y_i\in{\cal P}'}(h(Y_i)-h(P_F^C)-d_A^-(Y_i))\\
&= &\sum_{Y_i\in{\cal P}'}(h_2(X_i)-h(P_F^C))\\
&=&\sum_{X_i\in{\cal P}} h'(X_i).
\end{eqnarray*} 
Thus the graph $(C,E_2)$ satisfies \eqref{condhyph}. In particular, we get that $0=e_{E_2}(C)\ge h'(C).$ Moreover, $h'(C)=h_2(C)-h(P_F^C)\ge h(P_F^C)-h(P_F^C)=0.$ Hence $h'(C)=0.$ 
Then, by Theorem \ref{hypgraphcoveringhorient}, there exists an orientation  {\boldmath$\vec {E}_2$} of $E_2$ such that $d_{\vec {E}_2}^-(X)\ge h'(X)=h_2(X)-h(P_F^C)$  for every $X\subseteq C.$ It follows that for every $Y\subseteq P_F^C$ with $Y\cap C\neq\emptyset$ and $e_{E\cup A}(Y-C)=0,$ we have 
	\begin{eqnarray}\label{mlioniuib}
d^-_{\vec {E}_2}(Y)=d^-_{\vec {E}_2}(Y\cap C)\ge h(Y)-h(P_F^C)-d^-_{A}(Y).
	\end{eqnarray}
	
Let {\boldmath$\vec F$} $=(V,\vec E\cup A),$ where $\vec E=\vec {E}_1\cup \vec {E}_2$. To finish the proof we show that $\vec F$ satisfies  \eqref{hgjgfxhfdwfdsz}. If $X\subseteq V_1$ then, by \eqref{kbxjhsvhchcvkjsz}, \eqref{hgjgfxhfdwfdsz} holds. 
If $X\subseteq C$ then, by \eqref{mlioniuib} applied for $X$, \eqref{hgjgfxhfdwfdsz} holds. We suppose from now on that  $X\cap C\neq\emptyset\neq X-C.$ Let {\boldmath$Z$} $=P_F^{X-C}$, {\boldmath$Y$} $=Z\cap P_F^C$ and  {\boldmath$W$} $=Y\cup(X\cap C)$. Then $X\cap Z=X-C, P_F^C\cap (X\cup Z)=W$ and $P_F^C\cup (X\cup Z)=P_F^X,$ $e_{E\cup A}(Y)=0.$ Thus, by \eqref{kbxjhsvhchcvkjsz} for $X-C$, \eqref{mlioniuib} for $W$ and the intersecting supermodularity of $h$, first for $X$ and $Z$, and then for $P_F^C$ and $X\cup Z,$ we have 
\begin{eqnarray*}
d^-_{\vec{E}\cup A}(X)	&	\ge	 &	d^-_{\vec{E}_1\cup A}(X-C)+d^-_{\vec{E}_2\cup A}(W)	\\
					&	\ge 	&	(h(X-C)-h(Z))+ (h(W)-h(P_F^C))					\\
					&	\ge	&	(h(X)-h(X\cup Z))+(h(X\cup Z)-h(P_F^X))				\\
					&	=	&	h(X)-h(P_F^X),
\end{eqnarray*}
so \eqref{hgjgfxhfdwfdsz} holds.
\end{proof}

We mention that Theorem \ref{hypgraphcoveringhorient} and hence Theorem \ref{lknvlknlkklsz} also work for mixed hypergraphs. This shows that the result of H\"orsch, Szigeti \cite{HSz2} can also be obtained from a theorem of Fortier et al. \cite{FKLSzT} on matroid-reachability-based packing of hyperarborescences.
\medskip

For the sake of completeness we show that Theorems \ref{hypgraphcoveringhorient} and \ref{lknvlknlkklsz} are in fact equivalent. We have just seen that Theorem \ref{hypgraphcoveringhorient} implies Theorem \ref{lknvlknlkklsz}. Now let us see the other direction. 

\begin{proof}
Let $(G=(V,E),h)$ be an instance of Theorem \ref{hypgraphcoveringhorient} such that $h(V)=0$ and \eqref{condhyph} holds. If $G$ is connected then Theorem \ref{lknvlknlkklsz} for $(G,h)$ reduces to  Theorem \ref{hypgraphcoveringhorient} because $h(P_G^X)=h(V)=0$ for every $\emptyset\neq X\subseteq V$. If the number {\boldmath$k$} of connected components of $G$ is larger than one then we need some more effort. Let {\boldmath$\ell$} $=\max\{h(X)+h(Y)-h(X\cup Y):X,Y\subset V, X\cap Y=\emptyset\}$ and {\boldmath$m$} $=\max\{k,\ell\}.$ Let {\boldmath$G'$} $=(V',E')$ be obtained from $G$ by adding a new vertex {\boldmath$s$} and by connecting $s$ to  $V$ by $m$ edges such that 
$G'$ is connected. Since $m\ge k$, this is possible. Let {\boldmath$h'$} be defined as follows: $h'(s)=m$ otherwise $h'(X)=h(X-s)$ for every $s\neq X\subseteq V'.$ Then $h'$ is an integer-valued intersecting supermodular set function on $V'.$ Indeed, the only case that must be checked is for pairs $X'=X\cup s$ and $Y'=Y\cup s$ with $\emptyset\neq X,Y\subseteq V$ and $X\cap Y=\emptyset.$ Then $h'(X')+h'(Y')-h'(X'\cup Y')=h(X)+h(Y)-h(X\cup Y)\le \ell\le m=h'(s)=h'(X'\cap Y')$. Note that since   $G'$ is connected and $h(V)=0$, we have
	\begin{eqnarray}
		h'(P^X_{G'})=h'(V')=h(V)=0 \text{ for every } \emptyset\neq X\subseteq V'.\label{hhvz}
	\end{eqnarray}
	
We now show that \eqref{condhyph} implies \eqref{mglvjhdfufehomf} for $(G',h')$. We only need  to check \eqref{mglvjhdfufehomf} for  subpartitions $\mathcal{P}$ of $V'$ because $G'$ is connected. If $\{s\}\in\mathcal{P}$ then, by \eqref{condhyph} and \eqref{hhvz}, we have  $e_{E'}(\mathcal{P})=m+e_{E}(\mathcal{P}-\{s\})\ge h'(s)+\sum_{X\in\mathcal{P}-\{s\}}h(X)=\sum_{X\in\mathcal{P}}h'(X)=\sum_{X\in\mathcal{P}}(h'(X)-h'(P^{V'}_{G'})).$ Otherwise, by \eqref{condhyph} and \eqref{hhvz}, we have $e_{E'}(\mathcal{P})\ge e_E(\mathcal{P})\ge\sum_{X\in\mathcal{P}}h(X-s)=\sum_{X\in\mathcal{P}}h'(X)=\sum_{X\in\mathcal{P}}(h'(X)-h'(P^{V'}_{G'}))$ so \eqref{mglvjhdfufehomf} holds for $(G',h').$

We can hence apply Theorem \ref{lknvlknlkklsz} for $(G',h')$ to get an orientation $\vec{G'}=(V',\vec{E'})$ such that 
	\begin{eqnarray}
		d^-_{\vec{E'}}(X)\ge h'(X)-h'(P^X_{G'}) \text{ for every } \emptyset\neq X\subseteq V'.\label{loigoigi}
	\end{eqnarray}

Deleting $s$ from $\vec{G'}$ we obtain an orientation $\vec{G}=(V,\vec E)$ of $G.$ We conclude by showing that   $\vec{G}$ satisfies \eqref{coverh}. 
By applying \eqref{loigoigi} for $s$ and \eqref{hhvz}, we get that $m=d_E(s)\ge d^-_{\vec{E'}}(s)\ge h'(s)-h'(P^{s}_{G'})=m$ so equality holds everywhere. In particular, $d_E(s)=d^-_{\vec{E'}}(s)$. Hence all the edges from $s$ to $V$ are oriented from $V$ to $s,$ thus $d^-_{\vec E}(X)=d^-_{\vec{E'}}(X)$  for every $X\subseteq V.$   Then, by \eqref{loigoigi} and \eqref{hhvz}, we have $d^-_{\vec E}(X)=d^-_{\vec{E'}}(X)\ge h'(X)-h'(P^X_{G'})=h(X)$ for every $\emptyset\neq X\subseteq V,$ so \eqref{coverh} holds.  This completes the proof.
\end{proof}

We conclude this section by showing that Theorem \ref{lblbkjlk}  easily follows from Theorems \ref{thmCsaba}  and \ref{lknvlknlkklsz}.

\begin{proof}
Let $(F,S,{\sf M})$ be an instance of Theorem \ref{lblbkjlk} that satisfies \eqref{lknemlkbfljevdhue}. Then, for $h(X)=-r_{{\sf M}}(S_X)$ for all $X\subseteq V$, \eqref{mglvjhdfufehomf} holds, so by Theorem \ref{lknvlknlkklsz} applied for $(F,h)$, there exists an orientation  $\vec E$ of $E$ such that in $\vec{F}=(V,\vec{E}\cup A)$ \eqref{hgjgfxhfdwfdsz} holds. Let $X\subseteq V.$ Since $P_{\vec{F}}^X\subseteq P_F^X$ and $r_{\sf M}$ is non-decreasing, we have $r_{{\sf M}}(S_{P_{\vec{F}}^X})\le r_{{\sf M}}(S_{P_F^X}).$ By \eqref{hgjgfxhfdwfdsz} applied for $P_{\vec{F}}^X$, we have $r_{{\sf M}}(S_{P_{\vec{F}}^X})\ge r_{{\sf M}}(S_{P_F^X})$. Hence  $r_{{\sf M}}(S_{P_{\vec{F}}^X})=r_{{\sf M}}(S_{P_F^X}).$  Thus \eqref{hgjgfxhfdwfdsz} implies that \eqref{csabicond} holds in $(\vec{F},S,{\sf M}).$ Then, by Theorem \ref{thmCsaba}, there exists a  matroid-reachability-based packing of arborescences in $(\vec{F},S,{\sf M}).$ Since $r_{{\sf M}}(S_{P_{\vec{F}}^X})=r_{{\sf M}}(S_{P_F^X})$, by replacing the arcs in $\vec E$ by the edges in $E$, we obtain a  matroid-reachability-based packing of mixed arborescences in $(F,S,{\sf M}).$
\end{proof}

\section{Packing  mixed hyperarborescences}\label{hyperarbor}

In this section we first list known results on  mixed hyperarborescences that are related to our second contribution on $(f,g)$-bounded $k$-regular $(\ell,\ell')$-limited packings of mixed hyperarborescences and we provide the proof of the new result.
\medskip

Theorem \ref{edmondsarborescences} was extended for the case when the roots of the arborescences are not fixed but the number of arborescences in the packing rooted at any vertex is bounded. 

 \begin{theo}[Frank \cite{FA}, Cai \cite{cai1}]\label{frankarborescencesroots} 
Let $D=(V,A)$ be a digraph, $f,g: V\rightarrow \mathbb Z_+$ functions and $k\in \mathbb{Z}_+.$
There exists an $(f,g)$-bounded packing of   $k$  spanning arborescences in $D$  if and only if 
\begin{eqnarray}
g(v)	& 	\geq 	&	f(v)\hskip 4.6truecm  \text{ for every } v \in V,\label{fg} \\ 
	e_{A}({\cal P})		&	\ge 	& 	k|\mathcal{P}|-\min\{k-f(\overline{\cup\mathcal{P}}),g(\cup\mathcal{P})\}\hskip .52truecm \text{ for every subpartition } \mathcal{P} \text{ of } V.\label{jvljh1}
\end{eqnarray}
\end{theo}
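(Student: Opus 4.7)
The plan is to derive both directions by reduction to Edmonds' Theorem \ref{edmondsarborescences}, using a multiset of roots as the intermediary between the packing and the degree conditions.

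For necessity, I would form the multiset $S$ consisting of the roots of the $k$ arborescences, with multiplicities $m(v):=|S_{\{v\}}|$. Then $|S|=k$ and $f(v)\le m(v)\le g(v)$ for every $v$, giving \eqref{fg}. For any subpartition $\mathcal P$, the pairwise disjointness of its members yields $e_A(\mathcal P)=\sum_{X\in\mathcal P}d^-_A(X)$, and summing Edmonds' inequalities $d^-_A(X)\ge k-|S_X|$ over $X\in\mathcal P$ produces $e_A(\mathcal P)\ge k|\mathcal P|-|S_{\cup\mathcal P}|$; bounding $|S_{\cup\mathcal P}|$ above both by $g(\cup\mathcal P)$ and by $k-f(\overline{\cup\mathcal P})$ then yields \eqref{jvljh1}.

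For sufficiency, under \eqref{fg} and \eqref{jvljh1}, it is enough to exhibit a multiset $S$ of $V$ of size $k$ with $f(v)\le |S_{\{v\}}|\le g(v)$ that satisfies $d^-_A(X)\ge k-|S_X|$ for every $\emptyset\ne X\subseteq V$, since Theorem \ref{edmondsarborescences} would then furnish the packing. Setting $m(v):=|S_{\{v\}}|$, I would search for an integer vector $m:V\to\mathbb Z_+$ in the box $[f,g]$ with $m(V)=k$ and $m(X)\ge p(X):=k-d^-_A(X)$ for every $\emptyset\ne X\subseteq V$. Because $d^-_A$ is submodular and $d^-_A(V)=0$, the set function $p$ is intersecting supermodular with $p(V)=k$, so $B(p):=\{m\in\mathbb R^V : m(V)=p(V),\; m(X)\ge p(X)\text{ for all } X\}$ is the base polyhedron of a generalized polymatroid. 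By the standard feasibility theorem for g-polymatroids intersected with box constraints (see Chapter 14 of \cite{book}), $B(p)\cap[f,g]$ contains an integer point iff $\sum_{X\in\mathcal P}p(X)\le \min\{g(\cup\mathcal P),\,k-f(\overline{\cup\mathcal P})\}$ for every subpartition $\mathcal P$ of $V$; substituting $p(X)=k-d^-_A(X)$ and using $e_A(\mathcal P)=\sum_{X\in\mathcal P}d^-_A(X)$ converts this into exactly \eqref{jvljh1}.

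The main obstacle is to justify this last equivalence cleanly. One must verify that feasibility of $B(p)\cap[f,g]$ is captured exactly by subpartition-indexed inequalities (rather than by a broader laminar or crossing family), and that the two bounds $g(\cup\mathcal P)$ and $k-f(\overline{\cup\mathcal P})$ genuinely collapse into the single $\min$ appearing in \eqref{jvljh1} instead of producing two independent conditions. The symmetry $m(\overline U)=k-m(U)$, forced by the base equation $m(V)=k$, is what allows the $f$-lower bound on $\overline{\cup\mathcal P}$ and the $g$-upper bound on $\cup\mathcal P$ to be packaged inside one $\min$; isolating this algebraic observation, together with the routine supermodularity check for $p$, is where the bulk of the work lies.
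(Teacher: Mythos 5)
Your proposal is correct, but there is nothing in the paper to compare it against directly: Theorem \ref{frankarborescencesroots} is quoted as a known result of Frank \cite{FA} and Cai \cite{cai1} without proof, arising in the paper only as the special case $k=\ell=\ell'$ of Theorem \ref{bobisuv}, itself subsumed by the main result Theorem \ref{sibevhz2}. Your route --- encode the root multiplicities as an integer point of the base polyhedron of $p(X)=k-d^-_A(X)$ intersected with the box $[f,g]$, then invoke Edmonds' Theorem \ref{edmondsarborescences} --- is the classical one, and it is structurally parallel to the paper's proof of Theorem \ref{sibevhz2} via Lemma \ref{jbdevdouycdit}: there too, feasibility is decided by intersecting g-polymatroids and planks (Theorem \ref{gpmip}) and the packing is then produced by an Edmonds-type theorem (Corollary \ref{thmSzhyp}). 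The difference is the ambient space: you work with the $|V|$-dimensional vector of root counts, which suffices for digraphs, while the paper must work with the characteristic vector of the dyperedge set of the packing and intersect with the extended $k$-hypergraphic matroid ${\sf M}_{\mathcal F}^k$ to control orientations of hyperedges; in the digraph setting your argument is the more economical of the two. The one step you leave to citation is exactly the right one to isolate: for an intersecting (not fully) supermodular $p$, the polyhedron $\{m: m(V)=p(V),\ m(X)\ge p(X)\ \text{for all}\ X\}$ is a base polyhedron whose fully supermodular completion is $q(X)=\max\{\sum_i p(X_i):\{X_i\}\text{ a partition of }X\}$, and then the two conditions of Theorem \ref{gpmip}.\ref{g}(i) for intersecting with $Q(f,g)$, namely $q\le g$ and $f(X)\le q(V)-q(V-X)$, unfold precisely into the two families of subpartition inequalities, one yielding the bound $g(\cup\mathcal P)$ and the other $k-f(\overline{\cup\mathcal P})$, which is the min-collapse you describe via $m(\overline{U})=k-m(U)$; this truncation fact is indeed in Chapter 14 of \cite{book}, integrality follows from Theorem \ref{gpmip}.\ref{g}(ii), and the boundary requirements $f(V)\le k\le g(V)$ are captured by \eqref{jvljh1} applied to $\mathcal P=\emptyset$ and to a partition of $V$. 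So: no gap, and a clean, more direct derivation than specializing the paper's hypergraph machinery.
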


If $S$ is a multiset of  $V$ and $f(v)=g(v)=|S_v|$ for all $v\in V$ then  Theorem \ref{frankarborescencesroots} reduces to Theorem \ref{edmondsarborescences}.
\medskip

Theorem \ref{frankarborescencesroots} can be generalized for the case when the arborescences are not necessarily spanning but every vertex must belong to the same number of arborescences in the packing. For $g: V\rightarrow \mathbb Z_+$ and $k\in \mathbb Z_+,$ let {\boldmath$g_k$}$(v)=\min\{k,g(v)\}$ for every $v\in V.$ For convenience, we present not the original version of the result  of \cite{BF3} which is about packing spanning branchings but one that fits better to our framework.

\begin{theo}[B\'erczi, Frank \cite{BF3}]\label{bobisuv}
Let $D=(V,A)$ be a digraph, $f,g: V\rightarrow \mathbb Z_+$ functions and $k,\ell,\ell'\in\mathbb Z_+$. 
There exists an $(f,g)$-bounded $k$-regular $(\ell,\ell')$-limited packing of  arborescences in $D$  if and only if  
	\begin{eqnarray}  
		g_k(v)				&	\ge	&	f(v) \hskip 4.67truecm \text{ for every } v\in V,\label{bdkzjdju}\\
		\min\{g_k(V),\ell'\}		&	\ge	&	\ell\label{gkvl}\\
		e_A({\cal P})&\ge & k|\mathcal{P}|-\min\{\ell'-f(\overline{\cup\mathcal{P}}),g(\cup\mathcal{P})\} 
			\hskip .52truecm \text{ for every subpartition } \mathcal{P} \text{ of } V.\label{ndjvhd}
	\end{eqnarray}
\end{theo}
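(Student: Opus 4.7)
I would verify \eqref{bdkzjdju}, \eqref{gkvl}, \eqref{ndjvhd} by direct counting. Let $r(v)$ denote the number of arborescences in the packing rooted at $v$ and $t$ the total number of arborescences. Since each $v$-arborescence contains $v$ and each $v$ is in exactly $k$ arborescences by $k$-regularity, $r(v)\le k$; by $(f,g)$-boundedness, $r(v)\le g(v)$; combined with $f(v)\le r(v)$, this gives \eqref{bdkzjdju}. Summing, $t=\sum_{v\in V} r(v)\le g_k(V)$, and combining with $\ell\le t\le\ell'$ gives \eqref{gkvl}. For \eqref{ndjvhd}, fix a subpartition $\mathcal P$; for each arborescence $T_i$ and each $X\in\mathcal P$, the number of arcs of $T_i$ entering $X$ equals the number of components of $T_i$ restricted to $X\cap V(T_i)$, minus $1$ if the root of $T_i$ lies in $X$. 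Summing over $T_i$ and $X\in\mathcal P$ and using $k$-regularity yields $e_A(\mathcal P)\ge k|\mathcal P|-\#\{i:\mathrm{root}(T_i)\in\cup\mathcal P\}$. The number of arborescences rooted in $\cup\mathcal P$ is at most $g(\cup\mathcal P)$ and also at most $t-f(\overline{\cup\mathcal P})\le\ell'-f(\overline{\cup\mathcal P})$, and the $\min$ in \eqref{ndjvhd} follows.

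\textbf{Sufficiency.} The plan is to reduce to Theorem~\ref{frankarborescencesroots}. I would build an auxiliary digraph $D^+$ from $D$ by adding two new vertices $s$ and $t$, with $g(v)$ parallel arcs $(s,v)$ for each $v\in V$ and sufficiently many parallel arcs $(s,t)$. In any spanning $s$-arborescence of $D^+$, the vertex $t$ is reached by a single $(s,t)$-arc (as $t$ has no out-arcs), and the remaining out-arcs of $s$ are $(s,v)$-arcs whose endpoints in $V$ are precisely the roots of sub-arborescences of $D$ covering $V$. Consequently, a packing of $k$ spanning $s$-arborescences in $D^+$ with the number of $(s,v)$-arcs used lying in $[f(v),g(v)]$ and the total number of $(s,V)$-arcs used lying in $[\ell,\ell']$ corresponds exactly to an $(f,g)$-bounded $k$-regular $(\ell,\ell')$-limited packing of arborescences in $D$. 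The remaining task is to translate \eqref{bdkzjdju}--\eqref{ndjvhd} into the hypothesis \eqref{jvljh1} of Theorem~\ref{frankarborescencesroots}, applied to a version in which the $(f,g)$-bounds are placed on arc usage rather than on roots; the translation proceeds by case analysis on whether a subpartition $\mathcal P^+$ of $V^+$ contains $\{s\}$, contains $\{t\}$, or neither.

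\textbf{Main obstacle.} The genuine difficulty is the $\min$ inside \eqref{ndjvhd}, which simultaneously couples the lower bound $f$ and the limit $\ell'$ with the upper bound $g$; producing both terms of this minimum from a single reduction forces the arc multiplicities in $D^+$ (especially those of $(s,t)$) to be chosen so that the subpartition inequality in $D^+$ yields either the $\ell'-f(\overline{\cup\mathcal P})$ bound or the $g(\cup\mathcal P)$ bound depending on which is tighter. A cleaner and more symmetric route, which is foreshadowed in the introduction, is via generalized polymatroid theory: the root-count vectors $r\in\mathbb Z_+^V$ arising from $k$-regular packings of at most $\ell'$ arborescences in $D$ form the integer points of the base of a g-polymatroid, and the existence of an integer vector in the intersection of this base with the box defined by $f,g,\ell,\ell'$ is governed precisely by \eqref{bdkzjdju}--\eqref{ndjvhd}. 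This polymatroid framework is also the natural engine for the mixed hypergraph generalization pursued in Section~\ref{hyperarbor}.
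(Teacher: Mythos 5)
Your necessity argument is correct: the component count for arcs of $T_i$ entering a member $X$, the bound $\#\{i:\mathrm{root}(T_i)\in\cup\mathcal P\}\le\min\{g(\cup\mathcal P),\,t-f(\overline{\cup\mathcal P})\}$, and the derivations of \eqref{bdkzjdju} and \eqref{gkvl} are all sound. The sufficiency, however, has a genuine gap, and your proposed reduction fails at exactly the point you flag as the ``main obstacle.'' In any packing of spanning $s$-arborescences of $D^+$, the vertex $t$ has in-degree exactly one in each arborescence, so the $(s,t)$-arcs contribute a fixed count of $k$ and cannot trade off against the number of $(s,v)$-arcs used; consequently nothing in $D^+$ enforces the window $[\ell,\ell']$ on the total number of roots. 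Likewise nothing forces any $(s,v)$-arc to be used, so the lower bounds $f(v)$ are not encoded either: parallel multiplicities yield only the upper caps $g(v)$. Moreover Theorem~\ref{frankarborescencesroots} bounds the number of arborescences \emph{rooted} at each vertex, and in $D^+$ all $k$ arborescences are rooted at $s$, so that theorem gives no leverage on arc usage; the ``version in which the $(f,g)$-bounds are placed on arc usage rather than on roots,'' with per-class windows $[f(v),g(v)]$ at a single source plus a global window $[\ell,\ell']$ on total usage, is not a known black box here --- it is essentially equivalent to the statement being proved, so invoking it makes the reduction circular at the critical step.

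Your closing remark points to the right engine but leaves all the hard work undone, and it is in fact how the paper proceeds: Theorem~\ref{bobisuv} is obtained as the digraph special case of Theorem~\ref{sibevhz2}, proved via Lemma~\ref{jbdevdouycdit}. The assertion that the relevant vectors are the integer points of a g-polymatroid intersected with planks is precisely Lemma~\ref{jbdevdouycdit}(a)--(b), and it is not free: part (a) needs Claim~\ref{bdnkls} (the arc set of a $k$-regular packing of arborescences is the arc set of a packing of $k$ spanning branchings, proved via Theorem~\ref{hyperarborescences} and Corollary~\ref{thmSzhyp} together with a cardinality count), plus membership in the (extended $k$-hypergraphic) matroid with rank formula \eqref{kldnkjebdjheu1}; part (b) needs the intersection calculus of Theorem~\ref{gpmip} to compute the resulting bounding pair; and converting the rank inequalities \eqref{jbipuib1}--\eqref{jbipuib2} into the subpartition condition (\eqref{jvljh1hyp2}, specializing to \eqref{ndjvhd}) is itself a substantial argument (part (c), Claim~\ref{inequality}), which handles exactly the coupling of $f$, $\ell'$ and $g$ inside the minimum that blocked your reduction. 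So your proposal correctly identifies the obstacle and names the right framework, but for sufficiency it establishes neither the g-polymatroid structure nor the equivalence of conditions, which together constitute the actual proof.
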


For $k=\ell=\ell'$, Theorem \ref{bobisuv} reduces to Theorem \ref{frankarborescencesroots}.
\medskip

Theorem \ref{edmondsarborescences} was generalized for dypergraphs as follows. We only need the special case when the multiset $S$ is equal to $k$ times a vertex $s$.

\begin{theo}[Frank, T. Kir\'aly, Z. Kir\'aly \cite{fkiki}]\label{hyperarborescences} 
Let $\mathcal{D}=(V,\mathcal{A})$ be a dypergraph, $s\in V$ and $k\in \mathbb{Z}_+.$
There exists a packing of   $k$  spanning $s$-hyperarborescences in $\mathcal{D}$ if and only if 
	\begin{eqnarray*} 
		 d^-_\mathcal{A}(X) &\geq &k \hskip .5truecm \text{ for every  $\emptyset\neq X\subseteq V-s.$}
	\end{eqnarray*}
\end{theo}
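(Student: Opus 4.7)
The plan is to reduce Theorem \ref{hyperarborescences} to the directed case. Applying Theorem \ref{edmondsarborescences} with $S$ equal to $k$ copies of $s$, its condition becomes $d^-_A(X)\geq k$ for every $\emptyset\neq X\subseteq V-s$. So I would trim the dyperedges of $\mathcal{D}$ to arcs, producing a digraph $D=(V,A)$ satisfying this directed condition; Theorem \ref{edmondsarborescences} then gives $k$ arc-disjoint spanning $s$-arborescences in $D$, which lift back via the trimming map to the desired packing of $k$ spanning $s$-hyperarborescences in $\mathcal{D}$. Necessity is immediate since trimming cannot increase any set in-degree.

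For sufficiency I would induct on the tail surplus $\sigma(\mathcal{D})=\sum_{(Z,z)\in\mathcal{A}}(|Z|-1)$. The base case $\sigma=0$ is Theorem \ref{edmondsarborescences} directly. In the inductive step it suffices to find a dyperedge $(Z,z)$ with $|Z|\geq 2$ and some $y_0\in Z$ such that replacing $(Z,z)$ with $(Z-y_0,z)$ preserves the hypothesis on $V-s$. Observe that such a replacement can decrease $d^-_\mathcal{A}(X)$ only when $z\in X$ and $Z\cap\overline X=\{y_0\}$, and then by exactly $1$; I call such an $X$ with $d^-_\mathcal{A}(X)=k$ a \emph{tight witness} against $y_0$.

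The main obstacle is the key lemma: for any dyperedge $(Z,z)$ with $|Z|\geq 2$, some $y_0\in Z$ has no tight witness. I would argue by contradiction and uncrossing. Assume each $y\in Z$ has a tight witness $X_y\subseteq V-s$ with $z\in X_y$ and $Z\cap\overline{X_y}=\{y\}$, pick distinct $y_1,y_2\in Z$, and apply submodularity of the dypergraph in-degree function (a short case analysis on where the head and tails of each dyperedge lie relative to $X_{y_1}$ and $X_{y_2}$) to get
\[
d^-_\mathcal{A}(X_{y_1})+d^-_\mathcal{A}(X_{y_2})\geq d^-_\mathcal{A}(X_{y_1}\cap X_{y_2})+d^-_\mathcal{A}(X_{y_1}\cup X_{y_2}).
\]
The left side equals $2k$ and each term on the right is at least $k$ since both sets contain $z$ and lie in $V-s$, forcing equality throughout. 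However, $(Z,z)$ by itself contributes $2$ to the left side (it enters both $X_{y_i}$) but only $1$ to the right side: it enters $X_{y_1}\cap X_{y_2}$ because $Z\cap\overline{X_{y_1}\cap X_{y_2}}=\{y_1,y_2\}\neq\emptyset$, but fails to enter $X_{y_1}\cup X_{y_2}$ because $Z\cap\overline{X_{y_1}\cup X_{y_2}}=\emptyset$. This produces a strict slack of at least $1$, contradicting the forced equality. Once the lemma is in place, each inductive step strictly decreases $\sigma(\mathcal{D})$ while preserving the hypothesis, the induction terminates at a digraph, and Theorem \ref{edmondsarborescences} finishes the argument.
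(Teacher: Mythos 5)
Your proof is correct: the term-by-term submodularity of the dypergraph in-degree function does give the strict slack of $1$ coming from $(Z,z)$ (it enters $X_{y_1}$, $X_{y_2}$ and $X_{y_1}\cap X_{y_2}$ but not $X_{y_1}\cup X_{y_2}$), which contradicts the equality forced by $d^-_\mathcal{A}(X_{y_1}\cap X_{y_2}),\,d^-_\mathcal{A}(X_{y_1}\cup X_{y_2})\ge k$, so at most one tail of any dyperedge admits a tight witness and the trimming induction goes through, after which Theorem~\ref{edmondsarborescences} with $S$ equal to $k$ copies of $s$ finishes both directions. The paper itself states this theorem without proof, citing \cite{fkiki}, and your argument is essentially the standard trimming-plus-uncrossing reduction to Edmonds' theorem used there, so there is no substantive divergence to report.
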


Theorem \ref{hyperarborescences} easily implies the following corollary that we will  apply in the proof of our new result.

\begin{cor} \label{thmSzhyp}
Let $\mathcal{D}=(V,\mathcal{A})$  be a dypergraph and $S$ a multiset of  $V$. There exists a  $k$-regular packing of $s$-hyperarborescences ($s\in S$) in $\mathcal{D}$  if and only if  
\begin{eqnarray}
|S_v|					  &	\le	& k \hskip 1.55truecm\text{ for every } v\in V, \label{matcondori1}\\
d^-_{\mathcal{A}}(X)&	\geq	& k-|S_X| \hskip .44truecm\text{ for every } \emptyset\neq X\subseteq V.\label{matcondori2hyp}
\end{eqnarray}
\end{cor}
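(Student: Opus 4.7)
The plan is to reduce the corollary to Theorem \ref{hyperarborescences} by adjoining a super-source $s^*$. For necessity, suppose a $k$-regular packing of $s$-hyperarborescences $(s\in S)$ exists; by definition this arises from a trimming of $\mathcal{D}$ to a digraph $(V,A)$ carrying a $k$-regular packing of $s$-arborescences $(s\in S)$. Since each vertex $v$ belongs to exactly $k$ arborescences and every one of the $|S_v|$ arborescences rooted at $v$ contains $v$, we immediately obtain \eqref{matcondori1}. For \eqref{matcondori2hyp}, fix any $v\in X$: among the $k$ arborescences through $v$, at most $|S_X|$ have their root in $X$, so at least $k-|S_X|$ are rooted in $V-X$ and each such arborescence uses a distinct arc of $A$ entering $X$; since every arc of $A$ comes from a dyperedge of $\mathcal{A}$ entering the same set, $d^-_{\mathcal{A}}(X)\ge d^-_{A}(X)\ge k-|S_X|$.

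For sufficiency, I would form $\mathcal{D}'=(V\cup\{s^*\},\mathcal{A}')$ by adding, for every $v\in V$, exactly $|S_v|$ parallel arcs from $s^*$ to $v$. For every $\emptyset\ne X\subseteq V$, $d^-_{\mathcal{A}'}(X)=d^-_{\mathcal{A}}(X)+|S_X|\ge k$ by \eqref{matcondori2hyp}, so Theorem \ref{hyperarborescences} yields a trimming of $\mathcal{D}'$ to some digraph together with a packing $T_1,\dots,T_k$ of $k$ spanning $s^*$-arborescences in it. Deleting $s^*$ from each $T_j$ leaves a spanning branching on $V$ whose components are rooted exactly at the heads of the $s^*$-arcs chosen by $T_j$.

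The main obstacle is to guarantee that the roots appearing across all $T_j$ form precisely the multiset $S$, i.e., that every parallel copy of $s^*v$ is actually used. I would secure this by a local swap argument: if some copy of $s^*v$ is unused, then since at most $|S_v|\le k$ copies exist and fewer than $|S_v|$ are used, some $T_j$ does not use any copy of $s^*v$; letting $wv$ be the unique in-arc of $v$ in $T_j$, replace $wv$ by the unused copy of $s^*v$. The resulting $T_j'$ is still a spanning $s^*$-arborescence (every vertex keeps in-degree $1$, and the subtree hanging below $v$ is unchanged so no circuit appears), and arc-disjointness from the other $T_i$ is preserved ($wv$ belonged only to $T_j$, and the inserted copy was previously unused). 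Each swap strictly increases the number of $s^*$-arcs used, so after finitely many swaps all $|S|$ copies are consumed, whence the $T_j-s^*$ decompose into exactly $|S_v|$ sub-arborescences rooted at each $v\in V$. Since every $T_j$ is spanning, each vertex of $V$ lies in precisely one sub-arborescence per $T_j$, i.e., in exactly $k$ arborescences overall, yielding the required $k$-regular packing of $s$-hyperarborescences in $\mathcal{D}$.
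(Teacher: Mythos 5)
Your proof is correct and follows essentially the route the paper intends: Corollary \ref{thmSzhyp} is stated there without proof as an easy consequence of Theorem \ref{hyperarborescences}, and the super-source construction you use is exactly the one the paper itself employs (in the reverse direction) inside the proof of Claim \ref{bdnkls}. Your swap argument correctly supplies the one detail the paper leaves implicit, namely that the $k$ spanning $s^*$-arborescences can be modified so that every parallel copy of an arc $s^*v$ is used, which is what forces the root multiset to be exactly $S$ and is precisely where hypothesis \eqref{matcondori1} enters.
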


A common extension of Theorems  \ref{mixedgraph}  and \ref{frankarborescencesroots} was provided by Gao, Yang \cite{GY2}.

 \begin{theo}[Gao, Yang \cite{GY2}]\label{gaoyang3} 
Let $F=(V,E\cup A)$ be a mixed graph, $f,g: V\rightarrow \mathbb Z_+$ functions, and $k\in \mathbb{Z}_+.$
There exists an $(f,g)$-bounded  packing of $k$  spanning mixed arborescences in $F$ if and only if \eqref{fg} holds and 
  \begin{eqnarray*}
	e_{E\cup A}({\cal P})\ge 	k|\mathcal{P}|-\min\{k-f(\overline{\cup\mathcal{P}}),g(\cup\mathcal{P})\} \text{ for every subpartition } \mathcal{P} \text{ of } V.
\end{eqnarray*}
\end{theo}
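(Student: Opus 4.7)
\smallskip
\noindent\textbf{Proof proposal.}
The plan is to prove Theorem \ref{gaoyang3} by reducing it to its directed counterpart, Theorem \ref{frankarborescencesroots}, via an orientation theorem, following exactly the strategy that was used in Section \ref{arbor} to derive Theorem \ref{lblbkjlk} from Theorem \ref{thmCsaba} through Theorem \ref{lknvlknlkklsz}. The necessity of the two conditions is routine: given such a packing, each mixed arborescence has an orientation with in-degree one at every non-root vertex, so summing in-degrees over the members of any subpartition $\mathcal{P}$ and bounding the total number of roots in $\cup\mathcal{P}$ from above by $\min\{k-f(\overline{\cup\mathcal{P}}),g(\cup\mathcal{P})\}$ yields the subpartition inequality, while \eqref{fg} is automatic.

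For sufficiency, the main task is to establish the following orientation statement: if $F=(V,E\cup A)$ satisfies the subpartition condition of Theorem \ref{gaoyang3}, then there is an orientation $\vec E$ of $E$ such that the digraph $\vec F=(V,\vec E\cup A)$ satisfies the digraph condition \eqref{jvljh1}. Once such an orientation is available, Theorem \ref{frankarborescencesroots} applied to $\vec F$ with the same $f$, $g$, $k$ yields the desired packing in $\vec F$; replacing each arc of $\vec E$ by the corresponding edge of $E$ turns this packing into a packing of $k$ spanning mixed arborescences in $F$ with the required root multiplicities.

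The main obstacle is finding and proving the right orientation theorem, since the right hand side of \eqref{jvljh1} contains the awkward term $\min\{k-f(\overline{\cup\mathcal{P}}),g(\cup\mathcal{P})\}$ and therefore is not directly the sum over the members of $\mathcal{P}$ of a single intersecting supermodular function, so Theorem \ref{hypgraphcoveringhorient} does not apply off the shelf. A clean way forward is to split the subpartition inequality into its two equivalent constituents, namely $e_{E\cup A}(\mathcal{P})\ge k(|\mathcal{P}|-1)+f(\overline{\cup\mathcal{P}})$ and $e_{E\cup A}(\mathcal{P})\ge k|\mathcal{P}|-g(\cup\mathcal{P})$, and reduce each to a covering condition by passing to an auxiliary mixed graph $F^*$ obtained by adding a super-source $s$ to $F$ together with $g(v)$ arcs $s\to v$ for every $v\in V$; this converts the subpartition bounds into in-degree bounds on individual sets containing $s$, which can then be treated via Theorems \ref{hypgraphcoveringhorient} and \ref{mixedgraph}. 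An alternative route, which avoids the orientation machinery entirely, is to deduce Theorem \ref{gaoyang3} directly from the theory of generalized polymatroids by viewing the set of admissible root-multiplicity vectors $(k_v)_{v\in V}$ with $f(v)\le k_v\le g(v)$ and $\sum_v k_v=k$ as the base polytope of a generalized polymatroid; this is the viewpoint that the announced proof of the main new result in Section \ref{hyperarbor} is to adopt, and Theorem \ref{gaoyang3} will follow as a special case.
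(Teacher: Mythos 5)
Your necessity argument and your overall reduction scheme are fine, but the sufficiency part has a genuine gap exactly at the step you yourself identify as the main obstacle, and the workaround you sketch does not close it. First, splitting the condition into the two constituent families $e_{E\cup A}(\mathcal{P})\ge k(|\mathcal{P}|-1)+f(\overline{\cup\mathcal{P}})$ and $e_{E\cup A}(\mathcal{P})\ge k|\mathcal{P}|-g(\cup\mathcal{P})$ and ``reducing each to a covering condition'' cannot work as stated: you need a \emph{single} orientation satisfying both families simultaneously, Theorem \ref{hypgraphcoveringhorient} covers one intersecting supermodular function at a time (and the pointwise maximum of the two relevant functions is not intersecting supermodular), and the $f$-constituent is not of the form $\sum_{X\in\mathcal{P}}h(X)$ for \emph{any} set function $h$ on $V$, because of the undistributed $-k$ and the complement term $f(\overline{\cup\mathcal{P}})$. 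Second, the auxiliary graph $F^*$ with $g(v)$ parallel arcs $s\to v$ gives $s$ out-degree $g(V)$, which in general exceeds $k$; a packing of $k$ spanning $s$-arborescences in an orientation of $F^*$ then only yields spanning mixed \emph{branchings} of $F$ (an $s$-arborescence may use several arcs leaving $s$), the lower bounds $f(v)$ are not encoded at all, and nothing forces exactly $k$ roots. The standard repair is to add $m(v)$ arcs $s\to v$ for a root vector $m$ with $f\le m\le g$ and $m(V)=k$ such that $e_{E\cup A}(\mathcal{P})\ge k|\mathcal{P}|-m(\cup\mathcal{P})$ for every subpartition $\mathcal{P}$ --- but proving that such an $m$ exists under the stated condition is precisely the crux, and your proposal leaves it unproven. (Note also that once such an $m$ is found, no new orientation theorem is needed: Theorem \ref{mixedgraph} applies directly with $S$ the multiset with multiplicities $m$, so the ``orientation theorem'' framing is a detour and the real content is this root-selection lemma, whose natural proof is the g-polymatroid nonemptiness argument producing the term $\min\{k-f(\overline{\cup\mathcal{P}}),g(\cup\mathcal{P})\}$.)

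Your closing alternative is, in fact, the paper's actual route: the paper never proves Theorem \ref{gaoyang3} separately, but obtains it as the special case $k=\ell=\ell'$ of Theorem \ref{sibevhz2} (through Theorem \ref{hsz}), which is proved by intersecting the g-polymatroid $Q(p,b)$ built from planks and free matroids with $Q(0,r_{{\sf M}_{\mathcal{F}}^k})$ for the extended $k$-hypergraphic matroid, via Lemma \ref{jbdevdouycdit} and Theorem \ref{gpmip}. So your instinct about where the proof lives is right, but as written it is a one-sentence pointer rather than an argument; to make it a proof you would either have to carry out the specialization of Theorem \ref{sibevhz2} (including checking that $\min\{k-f(\overline{\cup\mathcal{P}}),g_k(\cup\mathcal{P})\}=\min\{k-f(\overline{\cup\mathcal{P}}),g(\cup\mathcal{P})\}$, which does hold) or to prove the root-selection lemma above directly.
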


If $S$ is a multiset of  $V$ and $f(v)=g(v)=|S_v|$ for every $v\in V$ then  Theorem \ref{gaoyang3} reduces to Theorem \ref{mixedgraph}. If $F$ is a digraph then Theorem \ref{gaoyang3} reduces to Theorem \ref{frankarborescencesroots}. 
\medskip

Theorem \ref{gaoyang3} can be generalized for mixed hypergraphs.

\begin{theo}[H\"orsch, Szigeti \cite{HSz}]\label{hsz} 
Let $\mathcal{F}=(V,\mathcal{E}\cup \mathcal{A})$ be a mixed hypergraph, $f,g: V\rightarrow \mathbb Z_+$ functions, and $k\in \mathbb{Z}_+.$
There exists an $(f,g)$-bounded  packing of $k$  spanning mixed hyperarborescences in $\mathcal{F}$ if and only if \eqref{fg} holds and  
\begin{eqnarray*}
	e_{\mathcal{E}\cup \mathcal{A}}({\cal P})		&	\ge 	& 	k|\mathcal{P}|-\min\{k-f(\overline{\cup\mathcal{P}}),g(\cup\mathcal{P})\}\hskip .52truecm \text{ for every subpartition } \mathcal{P} \text{ of } V.
\end{eqnarray*}
\end{theo}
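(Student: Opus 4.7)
The plan is to derive Theorem \ref{hsz} from its dypergraph specialization, Corollary \ref{thmSzhyp}, by a reduction that orients $\mathcal{E}$ via the hypergraph version of Theorem \ref{hypgraphcoveringhorient} (valid for mixed hypergraphs as the paper notes). \emph{Necessity} will be a direct counting argument: in any $(f,g)$-bounded packing of $k$ spanning mixed hyperarborescences with roots $r_1,\dots,r_k$, for each $X\in{\cal P}$ and each index $i$ with $r_i\notin X$ the $i$th arborescence contributes at least one hyperedge or dyperedge entering $X$; summing over $i=1,\dots,k$ and $X\in{\cal P}$, together with $|\{i:r_i\in\cup{\cal P}\}|\le\min\{g(\cup{\cal P}),k-f(\overline{\cup{\cal P}})\}$ (supplied by $(f,g)$-boundedness), gives the stated inequality.

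For \emph{sufficiency}, I would look for a pair $(t,\vec{\mathcal{E}})$, where $t:V\to\mathbb{Z}_+$ is a root profile with $f(v)\le t(v)\le g(v)$ and $t(V)=k$, and $\vec{\mathcal{E}}$ is an orientation of $\mathcal{E}$ satisfying $d^-_{\vec{\mathcal{E}}\cup\mathcal{A}}(X)\ge k-t(X)$ for every non-empty $X\subseteq V$. Given such a pair, the multiset $S$ on $V$ defined by $|S_v|=t(v)$ has $|S|=k$ and $|S_v|\le t(V)=k$, so Corollary \ref{thmSzhyp} applied to the dypergraph $(V,\vec{\mathcal{E}}\cup\mathcal{A})$ yields a $k$-regular packing of $s$-hyperarborescences $(s\in S)$; restoring each oriented hyperedge to its underlying hyperedge in $\mathcal{E}$ produces the required $(f,g)$-bounded packing of $k$ spanning mixed hyperarborescences in $\mathcal{F}$.

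For a fixed profile $t$, the orientation step is clean: the set function $h(X):=k-t(X)-d^-_{\mathcal{A}}(X)$ on non-empty proper subsets of $V$ (with $h(\emptyset)=h(V)=0$) is intersecting supermodular, since $t$ is modular and the in-degree function $d^-_\mathcal{A}$ of the dypergraph $(V,\mathcal{A})$ is submodular. Thus by the hypergraph version of Theorem \ref{hypgraphcoveringhorient}, a suitable $\vec{\mathcal{E}}$ exists if and only if
\begin{equation*}
  e_{\mathcal{E}\cup\mathcal{A}}({\cal P})\ \ge\ k|{\cal P}|-t(\cup{\cal P})\qquad\text{for every subpartition }{\cal P}\text{ of }V.
\end{equation*}
Setting $q(Y):=\max\{k|{\cal P}|-e_{\mathcal{E}\cup\mathcal{A}}({\cal P}):{\cal P}\text{ subpartition with }\cup{\cal P}=Y\}$, what remains is to produce an integer $t$ with $f\le t\le g$, $t(V)=k$, and $t(Y)\ge q(Y)$ for every $Y\subseteq V$.

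The \emph{main obstacle} is producing this profile $t$. The hypothesis gives $q(Y)\le\min\{g(Y),k-f(\overline Y)\}$, and choosing ${\cal P}=\emptyset$ and ${\cal P}=\{V\}$ yields $f(V)\le k$ and $q(V)=k\le g(V)$; in particular $q(Y)+f(\overline Y)\le k$ and $q(Y)\le g(Y)$ for every $Y$. Although $q$ need not itself be supermodular, these bounds are precisely those needed to realize $\{t:f\le t\le g,\ t(V)=k,\ t(Y)\ge q(Y)\}$ as the intersection of the box $[f,g]$ with the base of a generalized polymatroid built from $q$, so that Frank's theorem on non-emptiness (and integrality) of such intersections applies. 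This is the sole step where the theory of generalized polymatroids (as flagged in the abstract) is essential; everything else is the orientation packaging above.
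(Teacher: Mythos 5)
Your skeleton is sound and follows the classical digraph strategy: the necessity count is correct (each of the $k$ spanning mixed hyperarborescences whose root lies outside a member $X\in\mathcal{P}$ supplies a distinct hyperedge or dyperedge entering $X$, and $(f,g)$-boundedness caps the number of roots in $\cup\mathcal{P}$); the decomposition of sufficiency into a root profile $t$ plus an orientation, followed by Corollary \ref{thmSzhyp} (with $|S|=t(V)=k$ and $k$-regularity forcing each of the $k$ hyperarborescences to be spanning), is valid; and the orientation step is handled correctly, since $h(X)=k-t(X)-d^-_{\mathcal{A}}(X)$ is intersecting supermodular with $h(V)=0$, and a dyperedge enters at most one member of a subpartition, so the hypergraph version of Theorem \ref{hypgraphcoveringhorient} indeed characterizes the existence of $\vec{\mathcal{E}}$ by $e_{\mathcal{E}\cup\mathcal{A}}(\mathcal{P})\ge k|\mathcal{P}|-t(\cup\mathcal{P})$ for all subpartitions $\mathcal{P}$. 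This is a genuinely different architecture from the paper, which never introduces root profiles but works in the space of dyperedge sets.

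The gap is the final step, and it is the crux of the theorem. Your claim that the bounds $q(Y)\le\min\{g(Y),k-f(\overline{Y})\}$ and $q(V)=k$ are ``precisely those needed'' for non-emptiness presupposes that $\{t:t(Y)\ge q(Y)\text{ for all }Y\subseteq V\}$ is a g-polymatroid, i.e.\ that $q$ (or a supermodular function generating the same polyhedron) is supermodular --- which you concede you have not shown. For an arbitrary lower-bound function these boundary inequalities are not sufficient: take $V=\{a,b\}$, $k=2$, $f\equiv 0$, $g\equiv 2$ and $q(\{a\})=q(\{b\})=q(V)=2$; all your inequalities hold, yet no $t$ with $t(V)=2$ and $t\ge q$ exists. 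The difficulty is structural: since $e_{\mathcal{E}\cup\mathcal{A}}(\mathcal{P})$ counts a hyperedge entering several members of $\mathcal{P}$ only once, $q$ is \emph{not} the standard subpartition aggregation of an intersecting supermodular set function, so Frank's truncation and intersection theorems do not apply off the shelf, and a naive uncrossing proof of supermodularity of $q$ breaks down for exactly this once-per-subpartition counting. This is precisely where the hypergraphic structure must enter. The paper supplies it on the dual side: Lorea's hypergraphic matroid \cite{l}, the extended $k$-hypergraphic matroid ${\sf M}_{\mathcal{F}}^k$ with rank formula \eqref{kldnkjebdjheu1} (whose minimum over partitions encodes your subpartition bounds as a matroid rank, hence with submodularity for free), and Theorem \ref{gpmip} applied to the polyhedron $T$ of Lemma \ref{jbdevdouycdit}; Theorem \ref{hsz} is then the case $k=\ell=\ell'$ of Theorem \ref{sibevhz2}, and the original proof in \cite{HSz} likewise goes through ${\sf M}_{\mathcal{F}}^k$ via matroid intersection. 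To complete your route you must either prove that $Y\mapsto\max\{k|\mathcal{P}|-e_{\mathcal{E}\cup\mathcal{A}}(\mathcal{P}):\cup\mathcal{P}\subseteq Y\}$ is supermodular --- a nontrivial fact essentially equivalent to the polymatroidal structure underlying the theorem --- or import ${\sf M}_{\mathcal{F}}^k$ and \eqref{kldnkjebdjheu1}, at which point the argument collapses into the known proofs.
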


If $\mathcal{F}$ is a mixed graph then Theorem \ref{hsz} reduces to Theorem \ref{gaoyang3}. 
\medskip

The main contribution of the present paper is a common generalization of Theorems \ref{bobisuv} and \ref{hsz}.

\begin{theo}\label{sibevhz2} 
Let $\mathcal{F}=(V,\mathcal{E}\cup \mathcal{A})$ be a mixed hypergraph, $f,g: V\rightarrow \mathbb Z_+$ functions, and $k,\ell,\ell'\in \mathbb{Z}_+-\{0\}.$
There exists an $(f,g)$-bounded $k$-regular $(\ell,\ell')$-limited packing of 
 mixed hyperarborescences in $\mathcal{F}$ if and only if \eqref{bdkzjdju} and \eqref{gkvl}  hold and  
\begin{eqnarray}
	e_{\mathcal{E}\cup \mathcal{A}}({\cal P})		&	\ge 	& 	k|\mathcal{P}|-\min\{\ell'-f(\overline{\cup\mathcal{P}}),g_k(\cup\mathcal{P})\}\hskip .52truecm \text{ for every subpartition } \mathcal{P} \text{ of } V.\label{jvljh1hyp2}
\end{eqnarray}
\end{theo}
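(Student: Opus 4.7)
The plan is to establish necessity by a direct counting argument and to prove sufficiency by reducing to Corollary \ref{thmSzhyp} via an orientation (using the mixed-hypergraph version of Theorem \ref{hypgraphcoveringhorient}) combined with a root-multiplicity vector produced by generalized polymatroid theory.

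For necessity, given a valid packing with $N\in[\ell,\ell']$ arborescences and $r_v$ of them rooted at $v$ (so $f(v)\le r_v\le g_k(v)$ and $\sum_v r_v=N$), I would fix an orientation of $\mathcal{E}$ and trim each used dyperedge to obtain a spanning subdigraph $D=(V,A_D)$ whose arcs decompose into the $N$ trimmed arborescences. For a subpartition $\mathcal{P}$ and $v\in X\in\mathcal{P}$, of the $k$ arborescences containing $v$ at most $r(X):=\sum_{u\in X}r_u$ are rooted in $X$, so at least $k-r(X)$ contribute an arc of $D$ entering $X$. Since members of $\mathcal{P}$ are disjoint and distinct arcs of $D$ come from distinct hyperedges or dyperedges of $\mathcal{F}$ each entering the member that contains the arc's head, we get $e_{\mathcal{E}\cup\mathcal{A}}(\mathcal{P})\ge\sum_{X\in\mathcal{P}}d^-_D(X)\ge k|\mathcal{P}|-r(\cup\mathcal{P})$; combining with $r(\cup\mathcal{P})\le N-f(\overline{\cup\mathcal{P}})\le\ell'-f(\overline{\cup\mathcal{P}})$ and $r(\cup\mathcal{P})\le g_k(\cup\mathcal{P})$ yields \eqref{jvljh1hyp2}, while \eqref{bdkzjdju} and \eqref{gkvl} are immediate.

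For sufficiency, I would first (Step 1) use generalized polymatroid theory to produce an integer vector $m\colon V\to\mathbb{Z}_+$ satisfying $f(v)\le m(v)\le g_k(v)$, $\max(\ell,k)\le m(V)\le\ell'$, and $m(\cup\mathcal{P})\ge k|\mathcal{P}|-e_{\mathcal{E}\cup\mathcal{A}}(\mathcal{P})$ for every subpartition $\mathcal{P}$. The feasible set is the intersection of the contra-g-polymatroid defined by the intersecting supermodular truncation $X\mapsto\max\{k|\mathcal{P}|-e_{\mathcal{E}\cup\mathcal{A}}(\mathcal{P}):\mathcal{P}\text{ subpartition of }V,\ \cup\mathcal{P}=X\}$, the box $[f,g_k]$, and the sum interval $[\max(\ell,k),\ell']$; the case $\mathcal{P}=\{V\}$ of \eqref{jvljh1hyp2} forces $\min\{\ell',g_k(V)\}\ge k$, so the interval is valid, and non-emptiness of the integer g-polymatroid follows from \eqref{bdkzjdju}, \eqref{gkvl}, and \eqref{jvljh1hyp2}. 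Then (Step 2), with $m$ fixed, define $q(X)=k-m(X)-d^-_{\mathcal{A}}(X)$ for $\emptyset\ne X\subsetneq V$ and $q(\emptyset)=q(V)=0$; this $q$ is intersecting supermodular, and using $\sum_{X\in\mathcal{P}}d^-_{\mathcal{A}}(X)=e_{\mathcal{A}}(\mathcal{P})$ (since dyperedges have unique heads) together with Step 1 one verifies $e_{\mathcal{E}}(\mathcal{P})\ge\sum_{X\in\mathcal{P}}q(X)$ for every subpartition. The mixed-hypergraph extension of Theorem \ref{hypgraphcoveringhorient} then produces an orientation $\vec{\mathcal{E}}$ of $\mathcal{E}$ with $d^-_{\vec{\mathcal{E}}\cup\mathcal{A}}(X)\ge k-m(X)$ for every non-empty $X\subseteq V$. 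Finally (Step 3), letting $S$ be the multiset with $|S_v|=m(v)$, the dypergraph $(V,\vec{\mathcal{E}}\cup\mathcal{A})$ together with $S$ satisfies the hypotheses of Corollary \ref{thmSzhyp}; the resulting $k$-regular packing of $s$-hyperarborescences $(s\in S)$, after replacing each dyperedge in $\vec{\mathcal{E}}$ by its underlying hyperedge, gives the desired $(f,g)$-bounded $k$-regular $(\ell,\ell')$-limited packing of mixed hyperarborescences in $\mathcal{F}$.

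The hard part will be Step 1. Verifying that the integer g-polymatroid cut out by the supermodular subpartition lower bound, the box $[f,g_k]$, and the sum interval $[\max(\ell,k),\ell']$ is non-empty requires careful application of g-polymatroid intersection and truncation theorems; the full strength of \eqref{jvljh1hyp2} is used in this verification, and this is precisely where the theory of generalized polymatroids announced in the introduction enters the proof.
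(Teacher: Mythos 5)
Your proposal is correct in outline but takes a genuinely different route from the paper. The paper never selects root multiplicities: it works on the ground set $\mathcal{A}\cup\mathcal{A}_{\mathcal{E}}$ of the directed extension, characterizes in Lemma \ref{jbdevdouycdit}(a) the characteristic vectors of the dyperedge sets of feasible packings as the integer points of the polyhedron $T$, which is the intersection of the g-polymatroid $\left(\sum_{v\in V}(Q(0,r_v)\cap K(k-g_k(v),k-f(v)))\right)\cap K(k|V|-\ell',k|V|-\ell)$ with $Q(0,r_{{\sf M}_{\mathcal{F}}^k})$ for the extended $k$-hypergraphic matroid, and then translates non-emptiness into \eqref{jvljh1hyp2} via the rank formula \eqref{kldnkjebdjheu1}. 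You instead put the g-polymatroid machinery on the vertex side (the root vector $m$), orient $\mathcal{E}$ by the mixed-hypergraph version of Theorem \ref{hypgraphcoveringhorient} --- which the paper explicitly licenses --- and finish with Corollary \ref{thmSzhyp}; this is essentially the Gao--Yang route for Theorem \ref{gaoyang3}, upgraded to hypergraphs and augmented by the plank $K(\max(\ell,k),\ell')$. Your Steps 2 and 3 are sound: $q$ is intersecting supermodular once $q(V)$ is reset to $0$, which is legitimate because $m(V)\ge k$ (your interval ensures this, and \eqref{jvljh1hyp2} for $\mathcal{P}=\{V\}$ makes $\min\{\ell',g_k(V)\}\ge k$ available); the identity $\sum_{X\in\mathcal{P}}d^-_{\mathcal{A}}(X)=e_{\mathcal{A}}(\mathcal{P})$ holds because heads are unique; and \eqref{matcondori1}, \eqref{matcondori2hyp} follow as you say. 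Your necessity argument is also fine and more explicit than the paper's, whose necessity is absorbed into Lemma \ref{jbdevdouycdit}(a) and (c). What each approach buys: the paper's proof yields a polyhedral description of \emph{all} feasible dyperedge sets, while yours is more modular, decoupling root selection from orientation and avoiding the hypergraphic matroid rank formula.

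The one substantive claim you assert without proof is exactly where you locate the difficulty: that $P(X)=\max\{k|\mathcal{P}|-e_{\mathcal{E}\cup\mathcal{A}}(\mathcal{P}):\cup\mathcal{P}\subseteq X\}$ is supermodular, so that your Step 1 region is an integral g-polymatroid. Be aware that this does \emph{not} follow from the standard truncation theorems: $k|\mathcal{P}|-e_{\mathcal{E}\cup\mathcal{A}}(\mathcal{P})$ is not additive over the members of $\mathcal{P}$ (a hyperedge entering several members is counted once), so $P$ is not the upper truncation of an intersecting supermodular set function, and the usual result on such truncations does not apply. The claim is nevertheless true and can be proved by a direct uncrossing: given subpartitions $\mathcal{P}_X$ inside $X$ and $\mathcal{P}_Y$ inside $Y$, the pairwise intersections $Z\cap Z'$ with $Z\in\mathcal{P}_X$, $Z'\in\mathcal{P}_Y$ are pairwise disjoint subsets of $X\cap Y$; taking, in each connected component of the intersection graph of $\mathcal{P}_X\cup\mathcal{P}_Y$, a spanning tree's worth of these intersections as $\mathcal{P}_{\cap}$ and the component unions as $\mathcal{P}_{\cup}$ preserves the total number of members, and a short case analysis shows $e_{\mathcal{E}\cup\mathcal{A}}(\mathcal{P}_{\cap})+e_{\mathcal{E}\cup\mathcal{A}}(\mathcal{P}_{\cup})\le e_{\mathcal{E}\cup\mathcal{A}}(\mathcal{P}_X)+e_{\mathcal{E}\cup\mathcal{A}}(\mathcal{P}_Y)$ (the only danger --- a hyperedge counted twice on the left while entering members of only one of $\mathcal{P}_X,\mathcal{P}_Y$ --- forces $e\subseteq Z'$ for some member $Z'$, and then $e$ enters no member of $\mathcal{P}_{\cup}$). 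With this lemma supplied, the feasibility conditions produced by Theorem \ref{gpmip} match \eqref{bdkzjdju}, \eqref{gkvl} and the two halves of \eqref{jvljh1hyp2} exactly as you predict. So: a viable alternative proof, but the uncrossing lemma must be written out before it is complete.
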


If $\mathcal{F}$ is a  digraph then Theorem \ref{sibevhz2} reduces to Theorem \ref{bobisuv}.
If $k=\ell=\ell'$ then Theorem \ref{sibevhz2} reduces to Theorem \ref{hsz}.
Theorem \ref{sibevhz2} will be obtained from the theory of generalized polymatroids and some matroid construction for mixed hypergraphs. We now explain these concepts.
\medskip

 Generalized polymatroids were introduced by Hassin \cite{hassin} and independently by  Frank \cite{fgp}. 
  For a pair $(p,b)$ of set functions on $S$ and $\alpha,\beta\in \mathbb{R},$ let us introduce the polyhedra 
 \begin{eqnarray*}
 \text{\boldmath$Q(p,b)$} &=&\{x\in \mathbb{R}^S: p(Z)\le x(Z)\le b(Z)\ \text{ for all } Z\subseteq S\},\\
  \text{\boldmath$K(\alpha,\beta)$}& =&\{x\in \mathbb{R}^S: \alpha\le x(S)\le \beta\}.
\end{eqnarray*}
 If  $p(\emptyset)=b(\emptyset)=0,$ $p$ is supermodular, $b$ is submodular and 
 $b(X)-p(Y)\ge b(X-Y)-p(Y-X)$ for all $X,Y\subseteq S,$ 
 the polyhedron $Q(p,b)$ is called a {\it generalized-polymatroid}, shortly {\it g-polymatroid}. 
The polyhedron $K(\alpha,\beta)$ is called a {\it plank.} 
 The Minkowski sum  of the $n$ g-polymatroids $Q(p_i,b_i)$ is denoted by {\boldmath$\sum_1^nQ(p_i, b_i)$}.
 We will need the following results on g-polymatroids, for more details  see \cite{book}.

\begin{theo}[Frank \cite{book}]\label{gpmip}
The following hold:	
	\begin{enumerate}
		\item Let $Q(p,b)$ be a g-polymatroid, $K(\alpha,\beta)$ a plank and $M$ $=Q(p,b)\cap K(\alpha,\beta).$\label{e}
			\begin{itemize}
				\item[(i)] $M\neq\emptyset$ if and only if $p\le b$, $\alpha\le \beta,$ $p(S)\le\beta$ and $\alpha\le b(S).$ 
				\item[(ii)] $M$ is a $g$-polymatroid. 
				\item[(iii)] If $M\neq\emptyset$ then $M=Q(p^\alpha_\beta, b^\alpha_\beta)$ with
					\begin{eqnarray}\label{interplank}
						\text{{\boldmath $p^\alpha_\beta(Z)$}} =\max\{p(Z), \alpha-b(S-Z)\}, 
						\ \ \text{{\boldmath $b^\alpha_\beta(Z)$}} =\min\{b(Z), \beta-p(S-Z)\}.
					\end{eqnarray}
			\end{itemize}
		\item Let $Q(p_1,b_1)$ and $Q(p_2,b_2)$ be two non-empty g-polymatroids and $M=Q(p_1,b_1)\cap Q(p_2,b_2).$\label{g}
			\begin{itemize}
				\item[(i)] $M\neq\emptyset$  if and only if $p_1\le b_2$ and $p_2\le b_1$. 
				\item[(ii)] If $p_1,b_1,p_2,b_2$ are integral and $M\neq\emptyset$ then $M$ contains an integral element.
			\end{itemize}
		\item Let $Q(p_i,b_i)$ be $n$ non-empty g-polymatroids. Then $\sum_1^nQ(p_i, b_i)=Q(\sum_1^np_i, \sum_1^nb_i).$\label{i}
	\end{enumerate}
\end{theo}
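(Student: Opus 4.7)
The plan is to prove all three parts by the standard lifting construction that identifies a g-polymatroid $Q(p,b)$ on $S$ with the projection of a base polyhedron on an extended ground set $S' = S \cup \{s_0\}$. Pick any $M \ge p(S)$ and define $f$ on $2^{S'}$ by $f(Z) = b(Z)$ for $Z \subseteq S$ and $f(Z \cup \{s_0\}) = M - p(S - Z)$. The hypothesis that $b$ is submodular, $p$ is supermodular, and the cross-inequality $b(X) - p(Y) \ge b(X-Y) - p(Y-X)$ holds is exactly what is needed to make $f$ submodular on $2^{S'}$. The base polyhedron $B(f) = \{ y \in \mathbb{R}^{S'} : y(Z) \le f(Z)\ \forall Z \subseteq S',\ y(S') = f(S')\}$ then projects onto $Q(p,b)$ by deleting the $s_0$-coordinate. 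This translation reduces all three parts to classical results of Edmonds on base polyhedra and submodular function intersection.

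For Part \ref{e}, necessity of $p \le b$, $\alpha \le \beta$, $p(S) \le \beta$, $\alpha \le b(S)$ is immediate by evaluating the defining inequalities at any $x \in M$. For (iii), any $x \in M$ satisfies $x(Z) \ge p(Z)$ and also $x(Z) = x(S) - x(S-Z) \ge \alpha - b(S-Z)$, giving the inclusion $M \subseteq Q(p^\alpha_\beta, b^\alpha_\beta)$; the reverse inclusion together with (ii) is handled by an uncrossing argument showing that $p^\alpha_\beta$ is supermodular, $b^\alpha_\beta$ is submodular, and the cross-inequality persists, after which non-emptiness under the stated conditions is inherited from non-emptiness of the corresponding base polyhedron. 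Part \ref{i} is then handled by induction on $n$, the core being $n=2$: for $x \in Q(p_1+p_2, b_1+b_2)$, finding $x_1 \in Q(p_1,b_1)$ with $x - x_1 \in Q(p_2,b_2)$ is equivalent to finding $x_1$ in the intersection $Q(p_1,b_1) \cap Q(x-b_2, x-p_2)$ of two g-polymatroids, and the compatibility conditions $p_1 \le x - p_2$ and $x - b_2 \le b_1$ follow directly from $x$ being in the sum-polymatroid, so Part \ref{g}(i) supplies the desired $x_1$.

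The heart of the proof, and the main obstacle, is Part \ref{g}. Necessity of $p_1 \le b_2$ and $p_2 \le b_1$ is trivial. For sufficiency, I would lift both g-polymatroids to base polyhedra $B(f_1)$ and $B(f_2)$ on a common extended ground set $S'$, calibrating the constants so that $f_1(S') = f_2(S')$. Edmonds' intersection theorem for base polyhedra says $B(f_1) \cap B(f_2) \ne \emptyset$ if and only if $f_1(Z) + f_2(S' - Z) \ge f_1(S')$ for every $Z \subseteq S'$; splitting according to whether $s_0 \in Z$ and translating back via the lifting yields precisely $p_1(Z) \le b_2(Z)$ and $p_2(Z) \le b_1(Z)$ for all $Z \subseteq S$. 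Integrality in (ii) then follows because when $(p_i, b_i)$ are integral the lifted $f_i$ are integral, and the defining system of $B(f_1) \cap B(f_2)$ is totally dual integral by Edmonds' theorem, so the intersection has integral vertices, which project to integral points of $Q(p_1,b_1) \cap Q(p_2,b_2)$. The delicate technical point throughout is verifying that the cross-inequality is preserved under truncation and lifting, which is where supermodularity of $p$ and submodularity of $b$ are genuinely used together.
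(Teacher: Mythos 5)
There is nothing to compare against inside the paper: Theorem~\ref{gpmip} is quoted as a known result from Frank's book \cite{book} and used as a black box (in Lemma~\ref{jbdevdouycdit} and in the derivation of Theorem~\ref{sibevhz2}); the paper contains no proof of it. Measured against the cited source, your outline is correct and is in fact the standard route: the lifting of $Q(p,b)$ to the base polyhedron $B(f)$ on $S\cup\{s_0\}$ with $f(Z)=b(Z)$ for $Z\subseteq S$ and $f(Z\cup\{s_0\})=M-p(S-Z)$ is exactly Frank's device, the three cases in checking submodularity of $f$ correspond precisely to submodularity of $b$, supermodularity of $p$ applied to complements, and the cross-inequality, and splitting Edmonds' common-base condition $f_1(Z)+f_2(S'-Z)\ge f_1(S')$ according to whether $s_0\in Z$ does translate back exactly to $p_2\le b_1$ and $p_1\le b_2$; integrality in part~\ref{g}(ii) via total dual integrality of the intersection system is also the standard argument. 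Your reduction of part~\ref{i} to part~\ref{g}(i) through $x-Q(p_2,b_2)=Q(x-b_2,\,x-p_2)$ is sound, but note that verifying this set is a g-polymatroid needs the cross-inequality for $(p_2,b_2)$ with the roles of the two sets exchanged (the terms $x(X)-x(Y)=x(X-Y)-x(Y-X)$ cancel); you assert the reduction without writing this one-line check. Two small inaccuracies in part~\ref{e}: the reverse inclusion $Q(p^\alpha_\beta,b^\alpha_\beta)\subseteq M$ needs no uncrossing at all, since $p^\alpha_\beta\ge p$, $b^\alpha_\beta\le b$, $p^\alpha_\beta(S)\ge\alpha$ and $b^\alpha_\beta(S)\le\beta$ give it immediately; the uncrossing computation is what establishes (ii), i.e., that $(p^\alpha_\beta,b^\alpha_\beta)$ is a paramodular pair, and the sufficiency direction of (i) then rests on the general fact --- itself proved via the same lifting --- that every g-polymatroid defined by a paramodular pair is nonempty, which you invoke only implicitly and should state, since it is exactly where the hypotheses $p(S)\le\beta$ and $\alpha\le b(S)$ enter (they are needed for the truncated pair to remain paramodular).
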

 
Given a hypergraph $\mathcal{H}=(V,\mathcal{E})$, let 
{\boldmath$\mathcal{I}_{\mathcal{H}}$} $=\{\mathcal{Z}\subseteq \mathcal{E}:|V(\mathcal{Z}')|>|\mathcal{Z}'| \text{ for all } \emptyset\neq \mathcal{Z}'\subseteq \mathcal{Z}\}.$ Lorea \cite{l} showed that  $\mathcal{I}_{\mathcal{H}}$  is the set of independent sets of a matroid {\boldmath${\sf M}_{\mathcal{H}}$} on $\mathcal{E}$, called the {\it hypergraphic matroid} of the hypergraph $\mathcal{H}$.
We also need the {\it $k$-hypergraphic matroid} {\boldmath${\sf M}_{\mathcal{H}}^k$} of ${\mathcal{H}}$ which is the $k$-sum matroid of ${\sf M}_{\mathcal{H}}$, that is the matroid on ground set $\mathcal{E}$ in which a subset of $\mathcal{E}$ is independent if it can be partitioned into $k$  independent sets of ${\sf M}_{\mathcal{H}}$. 
H\"orsch, Szigeti \cite{HSz} extended the previous construction for mixed hypergraphs as follows. Let $\mathcal{F}=(V,\mathcal{A}\cup \mathcal{E})$ be a mixed hypergraph. For a subpartition $\mathcal{P}$ of $V,$ {\boldmath$\mathcal{A}(\mathcal{P})$} and {\boldmath$\mathcal{E}(\mathcal{P})$} denote the set of dyperedges and the set of hyperedges that enter some member of $\mathcal{P}$. Let {\boldmath$\mathcal{H}_{\mathcal{F}}$} $=(V,$ {\boldmath$\mathcal{E}_{\mathcal{A}}$}$\cup \mathcal{E})$  the underlying hypergraph of $\mathcal{F}$ and  {\boldmath$\mathcal{D}_{\mathcal{F}}$} $=(V,\mathcal{A}\cup \mathcal{A}_{\mathcal{E}})$ the {\it directed extension}  of $\mathcal{F}$ where {\boldmath$\mathcal{A}_{\mathcal{E}}$} $=\bigcup_{e\in\mathcal{E}}\mathcal{A}_e$ and for $e\in\mathcal{E},$ {\boldmath$\mathcal{A}_e$}$=\{(e-x,x):x \in e\}$.
The {\it extended $k$-hypergraphic matroid} {\boldmath${\sf M}_{\mathcal{F}}^k$}  of ${\mathcal{F}}$ on $\mathcal{A}\cup\mathcal{A}_{\mathcal{E}}$ is obtained from ${\sf M}_{\mathcal{H}_{\mathcal{F}}}^k$ by replacing every $e \in \mathcal{E}$ by $|e|$ parallel copies of itself, associating these elements to the dyperedges in $\mathcal{A}_e$ and associating every hyperedge of $\mathcal{E}_\mathcal{A}$ to the corresponding dyperedge in $\mathcal{A}$. 
It is shown in \cite{HSz} that the rank function of the extended $k$-hypergraphic matroid ${\sf M}_{\mathcal{F}}^k$ satisfies for all  $\mathcal{Z}\subseteq\mathcal{A}\cup\mathcal{A}_{\mathcal{E}},$

\begin{equation}\label{kldnkjebdjheu1}
r_{{\sf M}_{\mathcal{F}}^k}(\mathcal{Z})=\min\{|\mathcal{Z}\cap \mathcal{A}(\mathcal{P})|+|\{e\in\mathcal{E}(\mathcal{P}):{\mathcal{Z}}\cap \mathcal{A}_e\neq\emptyset\}|+k(|V|-|\mathcal{P}|): \mathcal{P} \text{ partition of } V\}.
\end{equation}

Theorem \ref{sibevhz2} will follow from the following lemma.

\begin{lemma}\label{jbdevdouycdit}
Let $\mathcal{F}=(V,\mathcal{E}\cup \mathcal{A})$ be a mixed hypergraph, $f,g: V\rightarrow \mathbb Z_+$ functions, and $k,\ell,\ell'\in \mathbb{Z}_+-\{0\}.$ Let {\boldmath${\sf M}_v$} $=(\rho_{\mathcal{A}\cup\mathcal{A}_{\mathcal{E}}}(v),r_v)$ be the free matroid  for all $v\in V$ and {\boldmath${\sf M}_{\mathcal{F}}^k$} the extended $k$-hypergraphic matroid  of ${\mathcal{F}}$ on $\mathcal{A}\cup\mathcal{A}_{\mathcal{E}}.$ Let us define the following polyhedron $$\text{\boldmath$T$} =(\sum_{v\in V}(Q(0,r_v)\cap K(k-g_k(v),k-f(v))))\cap K(k|V|-\ell',k|V|-\ell)\cap Q(0,r_{{\sf M}_{\mathcal{F}}^k}).$$
\begin{itemize}
	\item[(a)]   The characteristic vectors of the dyperedge sets of  the $(f,g)$-bounded $k$-regular $(\ell,\ell')$-limited packings of  
	hyperarborescences in orientations of $\mathcal{F}$ are exactly the integer points of  $T.$
	\item[(b)] 	$T\neq\emptyset$ if and only if  \eqref{bdkzjdju} and \eqref{gkvl}  hold and for every $\mathcal{Z}\subseteq\mathcal{A}\cup\mathcal{A}_{\mathcal{E}},$
	\begin{eqnarray}
		\sum_{v\in V}\max\{0,k-g_k(v)-d_\mathcal{Z}^-(v)\}	&	\le	&	r_{{\sf M}_{\mathcal{F}}^k}(\overline{\mathcal{Z}}),\label{jbipuib1}\\
				k|V|-\ell'-\sum_{v\in V}\min\{d_\mathcal{Z}^-(v),k-f(v)\}	&	\le	&	r_{{\sf M}_{\mathcal{F}}^k}(\overline{\mathcal{Z}}).\label{jbipuib2}
	\end{eqnarray}
	\item[(c)] \eqref{jbipuib1} and \eqref{jbipuib2} are equivalent to \eqref{jvljh1hyp2}.
\end{itemize}
\end{lemma}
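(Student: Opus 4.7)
The plan is to handle the three parts of the lemma sequentially. Part (a) is a combinatorial translation, part (b) is an application of the g-polymatroid machinery of Theorem \ref{gpmip}, and part (c) is a combinatorial rewriting through the rank formula \eqref{kldnkjebdjheu1}.

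For part (a), I would use the substitution $d^-_\mathcal{Z}(v)=k-j_v$, where $j_v$ is the number of hyperarborescences rooted at $v$. Under this identification, the per-vertex plank $K(k-g_k(v),k-f(v))$ encodes $(f,g)$-boundedness combined with $k$-regularity, and the global plank $K(k|V|-\ell',k|V|-\ell)$ encodes the $(\ell,\ell')$-limit, since the total number of hyperarborescences is $k|V|-|\mathcal{Z}|$. The constraint $\chi_\mathcal{Z}\in Q(0,r_v)\cap Q(0,r_{{\sf M}_\mathcal{F}^k})$ forces $\chi_\mathcal{Z}$ to be a $0/1$-vector and $\mathcal{Z}$ to be independent in ${\sf M}_\mathcal{F}^k$. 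Evaluating the rank formula \eqref{kldnkjebdjheu1} at the singleton partition shows that independence forbids two distinct orientations of the same hyperedge (so $\mathcal{Z}$ extends to an orientation of $\mathcal{E}$), and evaluating it at $\mathcal{P}=\{X\}\cup\{\{v\}:v\notin X\}$ yields $\sum_{v\in X}d^-_\mathcal{Z}(v)-d^-_\mathcal{Z}(X)\le k(|X|-1)$, which is precisely the condition of Corollary \ref{thmSzhyp} with root multiset defined by $|S_v|:=k-d^-_\mathcal{Z}(v)$. That corollary then produces the desired $k$-regular packing of hyperarborescences from $\mathcal{Z}$, whose root statistics inherit the plank constraints; the reverse direction is immediate.

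For part (b), I would apply Theorem \ref{gpmip} step by step. Each per-vertex factor $Q(0,r_v)\cap K(k-g_k(v),k-f(v))$ is a g-polymatroid, non-empty iff $f(v)\le g_k(v)$, i.e.\ \eqref{bdkzjdju}; the Minkowski sum is again a g-polymatroid by Theorem \ref{gpmip}(3), with bounding pair $(p_1^{sum},b_1^{sum})$ computed via \eqref{interplank}; intersecting with the global plank yields yet another g-polymatroid, non-empty iff $\ell\le\min\{g_k(V),\ell'\}$, which is \eqref{gkvl}; finally, intersecting with $Q(0,r_{{\sf M}_\mathcal{F}^k})$ is non-empty iff $p_1\le r_{{\sf M}_\mathcal{F}^k}$ and $0\le b_1$ by Theorem \ref{gpmip}(2)(i). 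The second condition is automatic, while the first, after expanding $p_1$ by a second application of \eqref{interplank} and substituting $\mathcal{Z}\leftrightarrow\overline{\mathcal{Z}}$, splits into exactly the two inequalities \eqref{jbipuib1} (from the vertex-wise $p^v$ contributions to $p_1^{sum}$) and \eqref{jbipuib2} (from the $\alpha$-contribution of the global plank).

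Part (c), the combinatorial equivalence, is argued in both directions via the formula \eqref{kldnkjebdjheu1}. For \eqref{jvljh1hyp2}$\Rightarrow$\eqref{jbipuib1},\eqref{jbipuib2}, I fix $\mathcal{Z}$, pick a partition $\mathcal{P}$ attaining the minimum in \eqref{kldnkjebdjheu1} for $r_{{\sf M}_\mathcal{F}^k}(\overline{\mathcal{Z}})$, and apply \eqref{jvljh1hyp2} to the subpartition of $\cup\mathcal{P}$ obtained by keeping only the blocks that contribute non-trivially; reorganising the left-hand sides of \eqref{jbipuib1} and \eqref{jbipuib2} into subpartition form (matching either $k|\mathcal{P}|-g_k(\cup\mathcal{P})$ or $k|\mathcal{P}|-(\ell'-f(\overline{\cup\mathcal{P}}))$) produces the two inequalities. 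Conversely, for a given subpartition $\mathcal{P}$, I take $\overline{\mathcal{Z}}$ to consist of the dyperedges in $\mathcal{A}$ entering $\mathcal{P}$ together with all orientations in $\mathcal{A}_e$ for hyperedges $e\in\mathcal{E}$ entering $\mathcal{P}$, and upper-bound $r_{{\sf M}_\mathcal{F}^k}(\overline{\mathcal{Z}})$ by evaluating \eqref{kldnkjebdjheu1} at $\mathcal{P}$ itself, then take the tighter of \eqref{jbipuib1} and \eqref{jbipuib2} to recover \eqref{jvljh1hyp2}. The main obstacle I expect is precisely here: matching the single $\min$ in \eqref{jvljh1hyp2} against the two separate inequalities \eqref{jbipuib1} and \eqref{jbipuib2} requires a careful case split depending on which term attains the $\min$, and the contributions of vertices in $\overline{\cup\mathcal{P}}$ must be tracked accurately when passing between per-vertex and per-subpartition formulations.
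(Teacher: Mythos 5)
Your overall strategy --- translating via Corollary \ref{thmSzhyp} in (a), running the g-polymatroid calculus of Theorem \ref{gpmip} in (b), and shuttling between sets $\mathcal{Z}$ and subpartitions via \eqref{kldnkjebdjheu1} in (c) --- is the paper's, and your part (b) is essentially complete (the only omission: per-vertex non-emptiness also needs $k-g_k(v)\le d^-_{\mathcal{A}\cup\mathcal{A}_{\mathcal{E}}}(v)$, and the plank intersection needs $b_\Sigma(\mathcal{A}\cup\mathcal{A}_{\mathcal{E}})\ge k|V|-\ell'$; the paper derives both from $p\le r_{{\sf M}_{\mathcal{F}}^k}$ applied at $\mathcal{Z}=\emptyset$). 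But in (a), the direction you dismiss as ``immediate'' is the only hard one. Given an $(f,g)$-bounded $k$-regular $(\ell,\ell')$-limited packing, the plank constraints are indeed trivial, but you must also show that its dyperedge set $\vec{\mathcal{Z}}$ is independent in ${\sf M}_{\mathcal{F}}^k$, i.e.\ that $\mathcal{Z}$ partitions into $k$ sets independent in ${\sf M}_{\mathcal{H}_{\mathcal{F}}}$. The packing may consist of up to $\ell'>k$ hyperarborescences, so taking the hyperarborescences themselves as parts does not work. The paper's Claim \ref{bdnkls} handles this by adding a new vertex $s$ with an arc to each root, checking $d^-_{\vec{\mathcal{Z}}\cup A}(X)\ge k$ via Corollary \ref{thmSzhyp}, invoking Theorem \ref{hyperarborescences}, and then arguing by a dyperedge count that $\vec{\mathcal{Z}}$ itself decomposes into $k$ spanning hyperbranchings, each with independent underlying hyperedge set. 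Some such re-decomposition is indispensable. (Symmetrically, in your sufficiency direction you should verify, via $k|V|-|S|=|\vec{\mathcal{Z}}|$, that the packing produced by Corollary \ref{thmSzhyp} uses all of $\vec{\mathcal{Z}}$, since (a) asserts that the characteristic vector of the packing's dyperedge set equals the given integer point.)

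In (c), your choice of $\overline{\mathcal{Z}}$ for the direction \eqref{jbipuib1},\eqref{jbipuib2}$\Rightarrow$\eqref{jvljh1hyp2} is flawed. If $\overline{\mathcal{Z}}$ consists only of the dyperedges entering members of $\mathcal{P}$ and the orientations of hyperedges entering members of $\mathcal{P}$, then every dyperedge or hyperedge lying inside a member of $\mathcal{P}$ lands in $\mathcal{Z}$, so vertices $v\in\cup\mathcal{P}$ can have $d^-_{\mathcal{Z}}(v)>0$, the sums $\sum_{v\in V}\min\{d^-_{\mathcal{Z}}(v),k-h(v)\}$ exceed $k|\overline{\cup\mathcal{P}}|-h(\overline{\cup\mathcal{P}})$, and the inequality you derive is weaker than \eqref{jvljh1hyp2} by exactly these interior contributions. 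The degenerate case $\mathcal{P}=\{V\}$ shows the loss is real: your recipe gives $\overline{\mathcal{Z}}=\emptyset$ and \eqref{jbipuib2} becomes vacuous when in-degrees are large, while \eqref{jvljh1hyp2} demands $\min\{\ell',g_k(V)\}\ge k$ --- which follows instead from the opposite choice $\mathcal{Z}=\emptyset$. The correct set, as in the paper, is $\mathcal{Z}=\bigcup_{v\in\overline{\cup\mathcal{P}}}\rho_{\mathcal{A}}(v)\cup\bigcup_{e}\mathcal{A}_e$, the union taken over hyperedges $e$ induced in $\overline{\cup\mathcal{P}}$: this forces $d^-_{\mathcal{Z}}(v)=0$ for all $v\in\cup\mathcal{P}$, and the interior objects sit harmlessly in $\overline{\mathcal{Z}}$ because they enter no member of the completed partition $\mathcal{P}'=\mathcal{P}\cup\{\{v\}:v\in\overline{\cup\mathcal{P}}\}$; note also that \eqref{kldnkjebdjheu1} minimizes over partitions, so you cannot evaluate it ``at $\mathcal{P}$ itself'' when $\mathcal{P}$ is a proper subpartition. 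For the converse direction your sketch has the right shape, but the selection rule must be the per-vertex one of Claim \ref{inequality}: keep exactly the blocks $X\in\mathcal{P}$ with $d^-_{\mathcal{Z}}(v)\le k-h(v)$ for all $v\in X$, separately for $h=f$ and $h=g_k$ (each time using only one branch of the min in \eqref{jvljh1hyp2}), with a witness vertex $v_X$ in each discarded block; ``blocks that contribute non-trivially'' to the rank minimum is not the operative criterion.
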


\begin{proof}
{\bf (a)} To prove the {\bf necessity}, let {\boldmath$\mathcal{B}$} be an $(f,g)$-bounded $k$-regular $(\ell,\ell')$-limited packing of  hyperarborescences in an orientation {\boldmath$\vec{\mathcal{F}}$} of $\mathcal{F}.$  
Let {\boldmath$S$} be the root set of the hyperarborescences in $\mathcal{B}$ and {\boldmath$\vec{\mathcal{Z}}$}  the dyperedge set of $\mathcal{B}$. Since the packing is $(f,g)$-bounded $k$-regular $(\ell,\ell')$-limited, we have 
	\begin{eqnarray}
		f(v)	&	\le	&	|S_v|\hskip .24truecm \le \hskip .24truecm g_k(v) \hskip .24truecm \text{ for all } v\in V,\label{boun}\\
		k-d_{\vec{\mathcal{Z}}}^-(v)	&	=	&	|S_v| \hskip 2truecm \text{ for all } v\in V,\label{reg}\\
		\ell	&	\le	&	\hskip .06truecm|S|\hskip .33truecm \le\hskip .24truecm \ell'.\label{lim}
	\end{eqnarray}
By \eqref{reg}, we get 
	\begin{eqnarray}
		k|V|-|\vec{\mathcal{Z}}|=\sum_{v\in V}(k-d_{\vec{\mathcal{Z}}}^-(v))=\sum_{v\in V}|S_v|=|S|.\label{sum}
	\end{eqnarray}
Let  {\boldmath$m$} be  the characteristic vector of  $\vec{\mathcal{Z}}$  and {\boldmath$m_v$}  the restriction of $m$ on $\rho_{\mathcal{A}\cup\mathcal{A}_{\mathcal{E}}}(v)$  for all $v\in V$. 
Then $m_v$ is a characteristic vector, so 
	\begin{eqnarray}
		m_v\in Q(0,r_v)  \text{ for all }  v\in V. \label{mvrv}
	\end{eqnarray}
By \eqref{boun}, \eqref{reg} and $d_{\vec{\mathcal{Z}}}^-(v)=m_v(\rho_{\mathcal{A}\cup\mathcal{A}_{\mathcal{E}}}(v))$ for all $v\in V$, we obtain that 
	\begin{eqnarray}
		m_v\in K(k-g_k(v),k-f(v)) \text{ for all }  v\in V. \label{kgkfjj}
	\end{eqnarray}
It follows, by \eqref{mvrv} and \eqref{kgkfjj},  that 
	\begin{eqnarray}
		m\in \sum_{v\in V}(Q(0,r_v)\cap K(k-g_k(v),k-f(v))).\label{mqk}
	\end{eqnarray}
By \eqref{lim}, \eqref{sum}, and $|\vec{\mathcal{Z}}|=m(\mathcal{A}\cup\mathcal{A}_{\mathcal{E}}),$ we obtain that  
	\begin{eqnarray}
		m\in K(k|V|-\ell',k|V|-\ell).\label{kkvlkv}
	\end{eqnarray}
	
\begin{claim}	\label{bdnkls}
$\vec{\mathcal{Z}}$ is independent in  ${\sf M}_{{\mathcal{F}}}^k$.
\end{claim}

\begin{proof}
We first show that $\vec{\mathcal{Z}}$ is the dyperedge set of a  packing of $k$ spanning hyperbranchings  in $\vec{\mathcal{F}}$. Indeed, let {\boldmath$\vec{\mathcal{G}}$} be the dypergraph with vertex set $V\cup\{s\}$ where {\boldmath$s$} is a new vertex, and dyperedge set $\vec{\mathcal{Z}}\cup A$ where {\boldmath$A$} $=\{ss':s'\in S\}.$
Since $\vec{\mathcal{Z}}$ is the dyperedge set of a  $k$-regular packing of  hyperarborescences in $\vec{\mathcal{F}}$, by Corollary \ref{thmSzhyp}, we have $d^-_{\vec{\mathcal{Z}}\cup A}(X)=d^-_{\vec{\mathcal{Z}}}(X)+d^-_A(X)=d^-_{\vec{\mathcal{Z}}}(X)+|S_X|\ge k.$
Then, by Theorem \ref{hyperarborescences}, there exists a packing of $k$ spanning $s$-hyperarborescences in $\vec{\mathcal{G}}$. By deleting the vertex $s$ from each $s$-hyperarborescence in the packing we obtain a  packing of $k$ spanning hyperbranchings  in $\vec{\mathcal{F}}$ with dyperedge set $\vec{\mathcal{Z}}'$. Since $\vec{\mathcal{Z}}\supseteq\vec{\mathcal{Z}}'$ and $|\vec{\mathcal{Z}}'|\ge k|V|-|S|=|\vec{\mathcal{Z}}|,$ we get that $\vec{\mathcal{Z}}=\vec{\mathcal{Z}}'.$ Hence $\vec{\mathcal{Z}}$ is the dyperedge set of a  packing of $k$ spanning hyperbranchings  in $\vec{\mathcal{F}}$.

For any hyperbranching, the number of its vertices is at least the number of heads of its dyperedges plus the number of its roots  and hence strictly larger than the number of its dyperedges.
Thus the hyperedge set of the underlying hypergraph of each hyperbranching is independent in ${\sf M}_{\mathcal{H}_{\mathcal{F}}}$. As  the hyperbranchings in the packing are dyperedge disjoint, it follows that $\mathcal{Z}$ is independent in ${\sf M}_{\mathcal{H}_{\mathcal{F}}}^k$. Then, since $\vec{\mathcal{Z}}$ is in the orientation $\vec{\mathcal{F}}$ of $\mathcal{F}$, $\vec{\mathcal{Z}}$ is independent in  ${\sf M}_{{\mathcal{F}}}^k$.  
\end{proof}

By Claim \ref{bdnkls} and since $m$ is  the characteristic vector of $\vec{\mathcal{Z}}$, we get that 
	\begin{eqnarray}
		m\in Q(0,r_{{\sf M}_{\mathcal{F}}^k}).\label{qrmfk}
	\end{eqnarray}
It follows, by \eqref{mqk}, \eqref{kkvlkv}, and \eqref{qrmfk}, that $m$ is an integer point of  $T.$
\medskip
   
To prove the {\bf sufficiency}, let {\boldmath$m$} $=(m_v)_{v\in V}$ be an  integer point of $T,$ that is $m_v\in Q(0,r_v)\cap K(k-g_k(v),k-f(v))$ for all $v\in V$ and $m\in K(k|V|-\ell',k|V|-\ell)\cap Q(0,r_{{\sf M}_{\mathcal{F}}^k}).$ 
Since $m_v$ is an integer point in $Q(0,r_v),$ $m_v$ is the characteristic vector of a subset {\boldmath $\vec{\mathcal{Z}}_v$} of $\rho_{\mathcal{A}\cup\mathcal{A}_{\mathcal{E}}}(v).$ 
Since $m_v\in K(k-g_k(v),k-f(v)),$ we have 
\begin{eqnarray}\label{dkjshjd}
k-g_k(v)\le m_v(\rho_{\mathcal{A}\cup\mathcal{A}_{\mathcal{E}}}(v))=|\vec{\mathcal{Z}}_v|=m_v(\rho_{\mathcal{A}\cup\mathcal{A}_{\mathcal{E}}}(v))\le k-f(v).
 \end{eqnarray}
Let {\boldmath$\vec{\mathcal{Z}}$} $=\bigcup_{v\in V}\vec{\mathcal{Z}}_v.$ Note that $d_{\vec{\mathcal{Z}}}^-(v)=|\vec{\mathcal{Z}}_v|$ for all $v\in V.$ Then, by $f\ge 0,$ we have $k-d_{\vec{\mathcal{Z}}}^-(v)\ge f(v)\ge 0$ for all $v\in V.$
Since $m\in K(k|V|-\ell',k|V|-\ell)$, we have 
\begin{eqnarray}\label{enfiuebfuye}
k|V|-\ell'\le m({\mathcal{A}\cup\mathcal{A}_{\mathcal{E}}})=|\vec{\mathcal{Z}}|=m({\mathcal{A}\cup\mathcal{A}_{\mathcal{E}}})\le k|V|-\ell.
\end{eqnarray}
Since $m\in  Q(0,r_{{\sf M}_{\mathcal{F}}^k}),$ we get that $\vec{\mathcal{Z}}$ is independent in ${\sf M}_{\mathcal{F}}^k.$ 
It follows that $\vec{\mathcal{Z}}$ is a subset of the dyperedge set of an orientation {\boldmath$\vec{\mathcal{F}}$} of $\mathcal{F}$ and 
in the hypergraph {\boldmath$\mathcal{H}_\mathcal{F}$} $=(V,\mathcal{E}_\mathcal{A}\cup\mathcal{E})$ we have for all $X\subseteq V,$ 
\begin{eqnarray}\label{lnbzudvuzy}
|\mathcal{Z}(X)|\le r_{{\sf M}_{\mathcal{H}_\mathcal{F}}^k}(\mathcal{Z}(X))\le k(|X|-1).
\end{eqnarray}
Let {\boldmath$S$} be the multiset of $V$ such that
$|S_v|=k-d_{\vec{\mathcal{Z}}}^-(v)$ for all $v\in V.$  Since $k-d_{\vec{\mathcal{Z}}}^-(v)\ge 0$ for all $v\in V,$ $S$ exists.
As $d^-_{\vec{\mathcal{Z}}}\ge 0,$ \eqref{matcondori1} holds.
Since for all $X\subseteq V,$ by \eqref{lnbzudvuzy}, we have $$d_{\vec{\mathcal{Z}}}^-(X)=\sum_{v\in X}d_{\vec{\mathcal{Z}}}^-(v)-|\vec{\mathcal{Z}}(X)|=\sum_{v\in X}(k-|S_v|)-|\mathcal{Z}(X)|\ge k|X|-|S_X|-k(|X|-1)=k-|S_X|,$$ so \eqref{matcondori2hyp} holds for {\boldmath$\vec{\mathcal{F}}'$} $=(V,\vec{\mathcal{Z}})$. Then, by Corollary \ref{thmSzhyp}, there exists a $k$-regular packing of $s$-hyperarborescen\-ces $(s\in S)$  in $\vec{\mathcal{F}}'$ and hence  in $\vec{\mathcal{F}}$. 
Since the number of dyperedges in the packing is $k|V|-|S|=\sum_{v\in V}(k-|S_v|)=\sum_{v\in V}d_{\vec{\mathcal{Z}}}^-(v)=|\vec{\mathcal{Z}}|$, the dyperedge set of the packing is $\vec{\mathcal{Z}}.$
As  for all $v\in V,$ by \eqref{dkjshjd}, we have $$f(v)\le k-|{\vec{\mathcal{Z}}_v}|=k-d_{\vec{\mathcal{Z}}}^-(v)=|S_v|=k-d_{\vec{\mathcal{Z}}}^-(v)=k-|{\vec{\mathcal{Z}}_v}|\le g_k(v)\le g(v),$$ so the packing is $(f,g)$-bounded.
Since, by \eqref{enfiuebfuye}, we have $$\ell\le k|V|-|\vec{\mathcal{Z}}|=|S|=k|V|-|\vec{\mathcal{Z}}|\le \ell',$$ so the packing is $(\ell,\ell')$-limited.
Finally, as $\vec{\mathcal{F}}$ is an orientation  of $\mathcal{F}$, the proof is complete.
\medskip

{\bf (b)} By Theorem \ref{gpmip}.\ref{e}, for all $v\in V$, $Q(0,r_v)\cap K(k-g_k(v),k-f(v))\neq\emptyset$ if and only if $0\le r_v$ (that always holds), $k-g_k(v)\le k-f(v)$ (that is \eqref{bdkzjdju} holds), $0\le k-f(v)$ (that holds by the previous inequality) and $k-g_k(v)\le r_v(\rho_{\mathcal{\mathcal{A}\cup\mathcal{A}_{\mathcal{E}}}}(v))=d_{\mathcal{\mathcal{A}\cup\mathcal{A}_{\mathcal{E}}}}^-(v).$ Then $Q(0,r_v)\cap K(k-g_k(v),k-f(v))=Q(p_v, b_v)$ where, by  \eqref{interplank}, we have for all $\mathcal{Z}_v\subseteq \rho_{\mathcal{\mathcal{A}\cup\mathcal{A}_{\mathcal{E}}}}(v),$

\begin{equation}\label{pabidyp}
\text{{\boldmath $p_v$}}(\mathcal{Z}_v) =\max\{0,k-g_k(v)-d_{\overline{\mathcal{Z}}_v}^-(v)\}, \ \ 
\text{{\boldmath $b_v$}}(\mathcal{Z}_v) =\min\{d_{\mathcal{Z}_v}^-(v), k-f(v)\}.
\end{equation}
By Theorem \ref{gpmip}.\ref{i}, $\sum_{v\in V}Q(p_v, b_v)=Q(p_\Sigma, b_\Sigma)$ where 
\begin{equation}\label{idgziufiu}
\text{{\boldmath $p_\Sigma$}} =\sum_{v\in V}p_v \text{ and {\boldmath $b_\Sigma$}} =\sum_{v\in V}b_v.
\end{equation}
By Theorem \ref{gpmip}.\ref{e}, $Q(p_\Sigma, b_\Sigma)\cap K(k|V|-\ell',k|V|-\ell)\neq\emptyset$ if and only if $Q(p_v, b_v)\neq\emptyset$ for all $v\in V$, $k|V|-\ell'\le k|V|-\ell$ (which is equivalent to one of the conditions in \eqref{gkvl}), $p_\Sigma(\mathcal{A}\cup\mathcal{A}_{\mathcal{E}})\le k|V|-\ell$  (which, by $p_\Sigma(\mathcal{A}\cup\mathcal{A}_{\mathcal{E}})=\sum_{v\in V}p_v(\rho_{\mathcal{\mathcal{A}\cup\mathcal{A}_{\mathcal{E}}}}(v))=\sum_{v\in V}\max\{0,k-g_k(v)-d_{\overline{\mathcal{\mathcal{A}\cup\mathcal{A}_{\mathcal{E}}}}}^-(v)\}=\sum_{v\in V}(k-g_k(v))=k|V|-g_k(V)$, is equivalent to the other condition in \eqref{gkvl}) and $b_\Sigma(\mathcal{A}\cup\mathcal{A}_{\mathcal{E}})\ge k|V|-\ell'$. Then the intersection is equal to a generalized polyhedron $Q(p, b)$ where, by \eqref{interplank}, \eqref{pabidyp},  and \eqref{idgziufiu},  for all $\mathcal{Z}\subseteq {\mathcal{A}\cup\mathcal{A}_{\mathcal{E}}}$, 
we have
\begin{eqnarray}
\text{{\boldmath $p$}} (\mathcal{Z}) &=&
\max\left\{\sum_{v\in V}\max\{0,k-g_k(v)-d_{\overline{\mathcal{Z}}}^-(v)\},k|V|-\ell'-\sum_{v\in V}\min\{d_{\overline{\mathcal{Z}}}^-(v), k-f(v)\}\right\},\label{jbivuyodcyc} \\
\text{{\boldmath $b$}}(\mathcal{Z})	&=&
\min\left\{\sum_{v\in V}\min\{d_{\mathcal{Z}}^-(v), k-f(v)\}, k|V|-\ell-\sum_{v\in V} \max\{0,k-g_k(v)-d_{\mathcal{Z}}^-(v)\}\right\}.
\end{eqnarray}
By Theorem \ref{gpmip}.\ref{g}, $T=Q(p, b)\cap Q(0,r_{{\sf M}_{\mathcal{F}}^k})\neq\emptyset$ if and only if $Q(p, b)\neq\emptyset$, 
$p\le r_{{\sf M}_{\mathcal{F}}^k}$ (which, by \eqref{jbivuyodcyc}, is equivalent to \eqref{jbipuib1} and \eqref{jbipuib2}), and $b\ge 0$ (which holds by $b\ge p\ge 0$). Note that $k-g_k(v)\le d_{\mathcal{\mathcal{A}\cup\mathcal{A}_{\mathcal{E}}}}^-(v)$ for all $v\in V$ and $b_\Sigma(\mathcal{A}\cup\mathcal{A}_{\mathcal{E}})\ge k|V|-\ell'$ follow from $p\le r_{{\sf M}_{\mathcal{F}}^k}$ applied for $\mathcal{Z}=\emptyset$ and the proof is complete.
\medskip

{\bf (c)} We note that \eqref{jbipuib1} is equivalent to 
	\begin{eqnarray}
				k|V|-g_k(V)-\sum_{v\in V}\min\{d_\mathcal{Z}^-(v),k-g_k(v)\}	&	\le	&	r_{{\sf M}_{\mathcal{F}}^k}(\overline{\mathcal{Z}}).\label{jbipuib3}
	\end{eqnarray}

First we show that \eqref{jbipuib1} and \eqref{jbipuib2} imply \eqref{jvljh1hyp2}. Let {\boldmath$\mathcal{P}$} be a subpartition of $V.$
Let {\boldmath$\mathcal{Z}$} $=\bigcup_{v\in \overline{\cup\mathcal{P}}}\rho_\mathcal{A}(v)\cup\bigcup_{e\in\mathcal{E}(\mathcal{F}(\overline{\cup\mathcal{P}}))}\mathcal{A}_e$ and {\boldmath$\mathcal{P}'$} $=\mathcal{P}\cup\{v\}_{v\in\overline{\cup\mathcal{P}}}.$ Note that $d_\mathcal{Z}^-(v)=0$ for all $v\in \cup\mathcal{P}$, 
\begin{equation}
	\sum_{v\in V}\min\{d_\mathcal{Z}^-(v),k-h(v)\}\le k|\overline{\cup\mathcal{P}}|-h(\overline{\cup\mathcal{P}}) \text{ for } h\in\{g_k,f\},\label{jbiebuiecipu}
\end{equation}
$\mathcal{P}'$ is a partition of $V$, and, by  \eqref{kldnkjebdjheu1}, we have
	\begin{equation}\label{nfokeoio}
r_{{\sf M}_{\mathcal{F}}^k}(\overline{\mathcal{Z}})\le |\overline{\mathcal{Z}}\cap \mathcal{A}(\mathcal{P}')|+|\{e\in\mathcal{E}(\mathcal{P}'):\overline{\mathcal{Z}}\cap \mathcal{A}_e\neq\emptyset\}|+k(|V|-|\mathcal{P}'|)=e_{\mathcal{A}\cup\mathcal{A}_{\mathcal{E}}}(\mathcal{P})+k(|V|-|\mathcal{P}|-|\overline{\cup\mathcal{P}}|).
	\end{equation}
Then \eqref{jbipuib3}, \eqref{jbiebuiecipu} applied for $h=g_k$ and \eqref{nfokeoio} imply $e_{\mathcal{E}\cup \mathcal{A}}({\cal P})\ge k|\mathcal{P}|-g_k(\cup\mathcal{P}).$
Similarly,	\eqref{jbipuib2}, \eqref{jbiebuiecipu} applied for $h=f$ and \eqref{nfokeoio} imply $e_{\mathcal{E}\cup \mathcal{A}}({\cal P})\ge k|\mathcal{P}|-\ell'+f(\overline{\cup\mathcal{P}}).$ Hence \eqref{jvljh1hyp2} follows.
\medskip

We now show that \eqref{jvljh1hyp2}  implies \eqref{jbipuib2} and \eqref{jbipuib3} (and hence \eqref{jbipuib1}). Let {\boldmath$\mathcal{Z}$} $\subseteq\mathcal{A}\cup\mathcal{A}_{\mathcal{E}}.$ By  \eqref{kldnkjebdjheu1}, there exists a partition {\boldmath$\mathcal{P}$} of $V$ such that for {\boldmath$\mathcal{K}$} $=\{e\in\mathcal{E}(\mathcal{P}):\overline{\mathcal{Z}}\cap \mathcal{A}_e\neq\emptyset\}$, we have

\begin{equation}\label{kldnkjebdjheu}
r_{{\sf M}_{\mathcal{F}}^k}(\overline{\mathcal{Z}})=|\overline{\mathcal{Z}}\cap \mathcal{A}(\mathcal{P})|+|\mathcal{K}|+k(|V|-|\mathcal{P}|).
\end{equation}
For  $h\in\{g_k,f\},$ let {\boldmath$\mathcal{P}_h$} $=\{X\in\mathcal{P}: d_\mathcal{Z}^-(v)\le k-h(v) \text{ for all } v\in X\}.$ Note that $\mathcal{P}_h$ is a subpartition of $V$ and for every $X\in\mathcal{P}-\mathcal{P}_h,$ there exists a vertex $v_X\in X$ such that $d_\mathcal{Z}^-(v_X)>k-h(v_X).$  By the definition of $\mathcal{K},$ we have 
\begin{equation}\label{lkdnzovducyxi}
\mathcal{A}_{\mathcal{E}(\mathcal{P}_h)-\mathcal{K}}\subseteq\mathcal{Z}\cap \mathcal{A}_{\mathcal{E}(\mathcal{P}_h)}.
\end{equation}

\begin{claim}\label{inequality}
$r_{{\sf M}_{\mathcal{F}}^k}(\overline{\mathcal{Z}})+\sum_{v\in V}\min\{d_\mathcal{Z}^-(v),k-h(v)\}\ge e_{\mathcal{E}\cup\mathcal{A}}(\mathcal{P}_h)-k|\mathcal{P}_h|-h(\overline{\cup\mathcal{P}_h})+k|V|.$
\end{claim}

\begin{proof}
By \eqref{kldnkjebdjheu}, the definitions of $\mathcal{P}_h$ and $v_X,$ $d_\mathcal{Z}^-\ge 0, k-h\ge 0$, \eqref{lkdnzovducyxi}, and $h\ge 0,$ we have
\begin{eqnarray*}
&&		r_{{\sf M}_{\mathcal{F}}^k}(\overline{\mathcal{Z}})+\sum_{v\in V}\min\{d_\mathcal{Z}^-(v),k-h(v)\}\\
&=&		|\overline{\mathcal{Z}}\cap \mathcal{A}(\mathcal{P})|+|\mathcal{K}|+k(|V|-|\mathcal{P}|)+\sum_{v\in\cup\mathcal{P}_h}		\min\{d_\mathcal{Z}^-(v),k-h(v)\}+\sum_{v\in\overline{\cup\mathcal{P}_h}}\min\{d_\mathcal{Z}^-(v),k-h(v)\}\\
&\ge&	|\overline{\mathcal{Z}}\cap \mathcal{A}(\mathcal{P}_h)|+\sum_{v\in\cup\mathcal{P}_h}d_\mathcal{Z}^-(v)+					\sum_{X\in\mathcal{P}-\mathcal{P}_h}\sum_{v\in X}\min\{d_\mathcal{Z}^-(v),k-h(v)\}+|\mathcal{K}|+k(|V|-|\mathcal{P}|)\\
&\ge&	|\overline{\mathcal{Z}}\cap \mathcal{A}(\mathcal{P}_h)|+|\mathcal{Z}\cap \mathcal{A}(\mathcal{P}_h)|+|\mathcal{Z}\cap \mathcal{A}_{\mathcal{E}(\mathcal{P}_h)}|+\sum_{X\in\mathcal{P}-\mathcal{P}_h}(k-h(v_X))+|\mathcal{K}|+k(|V|-|\mathcal{P}|)\\
&\ge&	|\mathcal{A}(\mathcal{P}_h)|+|\mathcal{A}_{\mathcal{E}(\mathcal{P}_h)-\mathcal{K}}|+\sum_{X\in\mathcal{P}-				\mathcal{P}_h}(k-h(X))+|\mathcal{K}|+k(|V|-|\mathcal{P}|)\\
&\ge&	e_{\mathcal{E}\cup\mathcal{A}}(\mathcal{P}_h)-|\mathcal{K}|+k(|\mathcal{P}|-|\mathcal{P}_h|)-							h(\overline{\cup\mathcal{P}_h})+|\mathcal{K}|+k(|V|-|\mathcal{P}|)\\
&\ge&	e_{\mathcal{E}\cup\mathcal{A}}(\mathcal{P}_h)-k|\mathcal{P}_h|-h(\overline{\cup\mathcal{P}_h})+k|V|,
\end{eqnarray*}
and the claim follows.
\end{proof}

Claim \ref{inequality}, applied for $h=f,$ and \eqref{jvljh1hyp2} provide that $r_{{\sf M}_{\mathcal{F}}^k}(\overline{\mathcal{Z}})+\sum_{v\in V}\min\{d_\mathcal{Z}^-(v),k-f(v)\}\ge k|V|-\ell'$, so \eqref{jbipuib2} holds. Similarly, Claim \ref{inequality}, applied for $h=g_k,$ and \eqref{jvljh1hyp2} provide that $r_{{\sf M}_{\mathcal{F}}^k}(\overline{\mathcal{Z}})+\sum_{v\in V}\min\{d_\mathcal{Z}^-(v),k-g_k(v)\}\ge k|V|-g_k(V)$, so \eqref{jbipuib3} holds. The proof of the theorem is complete.
\end{proof}

We finish the paper by showing that Theorem \ref{gpmip} and Lemma \ref{jbdevdouycdit} imply Theorem \ref{sibevhz2}.

\begin{proof} 
Let $(\mathcal{F}=(V,\mathcal{E}\cup \mathcal{A}),f,g,k,\ell,\ell')$ be an instance of Theorem \ref{sibevhz2} that satisfies \eqref{bdkzjdju}, \eqref{gkvl} and \eqref{jvljh1hyp2}. Since \eqref{jvljh1hyp2} holds, by Lemma \ref{jbdevdouycdit}(c), \eqref{jbipuib1} and \eqref{jbipuib2} hold. Since \eqref{bdkzjdju} and \eqref{gkvl} also hold, by Lemma \ref{jbdevdouycdit}(b), the polyhedron $T,$ defined in Lemma \ref{jbdevdouycdit}, is not empty. We have seen in the proof of Lemma \ref{jbdevdouycdit}(b) that $T$ is the intersection of two generalized polymatroids $Q(p, b)$ and $Q(0,r_{{\sf M}_{\mathcal{F}}^k}).$ Then, by Theorem \ref{gpmip}.\ref{g}(ii),  $T$ contains an integer point $x.$ By Lemma \ref{jbdevdouycdit}(b), $x$ is the characteristic vector of the dyperedge set of an $(f,g)$-bounded $k$-regular $(\ell,\ell')$-limited packing of hyperarborescences in an orientation $\vec{F}=(V,\vec{\mathcal{E}}\cup \mathcal{A})$ of $\mathcal{F}$. By replacing the dyperedges in $\vec{\mathcal{E}}$ by the underlying hyperedges in $\mathcal{E}$, we obtain the required packing.
\end{proof}

\section{Acknowledgements}

I  thank Csaba Kir\'aly and Pierre Hoppenot for their very careful reading of the paper.

\end{document}